\documentclass[a4paper,11pt,reqno]{amsart}
\usepackage[a4paper,hscale=0.7,vscale=0.75,centering]{geometry}
\usepackage{amssymb}

\newcommand{\erre}{\mathbb{R}}
\renewcommand{\r}{\mathbb{R}}

\newcommand{\E}{\mathbb{E}}
\newcommand{\EE}{\mathcal{E}}
\renewcommand{\L}{\mathcal{L}}
\renewcommand{\P}{\mathbb{P}}
\newcommand{\ip}[2]{\langle #1,#2 \rangle}
\newcommand{\bip}[2]{\left\langle #1,#2 \right\rangle}

\newcommand{\osc}{\mathop{\mathrm{osc}}\nolimits}

\newcommand{\var}{\mathop{\mathrm{Var}}\nolimits}

\newtheorem{prop}{Proposition}[section]
\newtheorem{thm}[prop]{Theorem}
\newtheorem{coroll}[prop]{Corollary}
\newtheorem{lemma}[prop]{Lemma}

\theoremstyle{definition}

\newtheorem{rmk}[prop]{Remark}

\numberwithin{equation}{section}

\begin{document}

\title[Approximations of evolving probability measures]{Quantitative
  approximations of evolving probability measures and sequential
  Markov Chain Monte Carlo methods}

\author{Andreas Eberle}
\address[Andreas Eberle]{Institut f\"ur Angewandte Mathematik,
  Universit\"at Bonn, Endenicher Allee 60, D-53115 Bonn, Germany}
\email{eberle@uni-bonn.de}
\urladdr{http://www.uni-bonn.de/$\sim$eberle}

\author{Carlo Marinelli}
\address[Carlo Marinelli]{Institut f\"ur Angewandte Mathematik,
  Universit\"at Bonn, Endenicher Allee 60, D-53115 Bonn, Germany, and
  Facolt\`a di Economia, Universit\`a di Bolzano, Piazza Universit\`a
  1, I-39100 Bolzano, Italy}
\urladdr{http://www.uni-bonn.de/$\sim$cm788}

\date{July 13, 2011}

\begin{abstract}
  We study approximations of evolving probability measures by an
  interacting particle system. The particle system dynamics is a
  combination of independent Markov chain moves and importance
  sampling/resampling steps. Under global regularity conditions, we
  derive non-asymptotic error bounds for the particle system
  approximation. In a few simple examples, including high dimensional
  product measures, bounds with explicit constants of feasible size
  are obtained. Our main motivation are applications to sequential
  MCMC methods for Monte Carlo integral estimation.
\end{abstract}

\subjclass[2000]{65C05, 60J25, 60B10, 47H20, 47D08}

\keywords{Markov Chain Monte Carlo, sequential Monte Carlo,
importance sampling, spectral gap, Dirichlet forms, functional
inequalities, Feynman-Kac formula.}

\thanks{We would like to thank the anonymous referees for detailed
  reports and very helpful comments on the first version of this
  article. This work was partially supported by the
  Sonderforschungsbereich 611, Bonn. The second-named author also
  gratefully acknowledges the support of the DAAD}

\maketitle

\section{Introduction}
\subsection{Evolving probability measures}
Let $(\mu_t)_{t \in [0,\infty)}$ denote a family of mutually
absolutely continuous probability measures on a set $S$. To keep the
presentation as simple and non-technical as possible, we assume that
$S$ is finite. Motivated by Monte Carlo methods for sequential
estimation of expectation values with respect to the probability
measures $\mu_t$ (see e.g. \cite{cappe,DM,DMDJ,DdFG} and references
therein), we will recall how to obtain Fokker-Planck type evolution
equations on the space of probability measures on $S$ that are
satisfied by $\mu_t$, and how to approximate these equations by
interacting particle systems. The main purpose of this paper is to
bound the error of the particle system approximations by an $L^p$
approach (see Theorems \ref{thm:main}, \ref{thm:main2} and
\ref{thm:main3} below).

Sequential Monte Carlo (SMC) methods that combine Markov Chain Monte
Carlo (MCMC) and Importance Sampling/Resampling methods to approximate
a given sequence $(\mu_t)$ of probability measures are used in a
variety of applications, see for instance
\cite{chop-stat,DMDJ,casella} and references therein. There is by now
a substantial literature on approximation properties of corresponding
particle system discretizations, cf. \cite{cappe,DM,MM} and the
references cited below. Nevertheless, our mathematical understanding
of SMC methods is still far more superficial than that of traditional
MCMC methods, where, at least for some specific models, sharp bounds
for mixing times, approximation errors and dependence on the dimension
have been derived. The $L^p$ approach to controlling the approximation
error that we propose here is a first step towards more quantitative
results that might be useful in particular in studying dimensional
dependence. In contrast to most of the literature on SMC methods (see
however \cite{MM,Rou-phd,Rou}), we focus on the continuous time case.

\smallskip

We assume that the measures are represented in the form
\begin{equation}
\mu_t(x) = \frac 1{Z_t}\,\exp \left(-\mathcal{U}_t(x)\right)\,\mu_0 (x),
\qquad t\geq 0,
\end{equation}
where $Z_t$ is a normalization constant, and $(t,x) \mapsto
\mathcal{U}_t(x)$ is a given function on $[0,\infty )\times S$ that is
continuously differentiable in the first variable.  If, for example,
$\mathcal{U}_t(x)=t\,\mathcal{U}(x)$ for some function
$\mathcal{U}:S\to \mathbb{R}$, then $(\mu_t)_{t\geq 0}$ is the
exponential family corresponding to $\mathcal{U}$ and $\mu_0$.  Let
\[
H_t(x) := -\frac{\partial}{\partial t}\log\mu_t(x)
= -\frac{\partial}{\partial t}
\log\frac{\mu_t(x)}{\mu_0(x)}
\]
denote the negative logarithmic time derivative of the measures
$\mu_t$. Note that
\begin{equation} \label{eq:timedep} 
  \mu_t(x) = \exp
  \left(-\int_0^tH_s(x)\,ds\right)\;\mu_0 (x)\,,
\end{equation}
and
\begin{equation}     \label{eq:centered}
  \ip{H_t}{\mu_t} = -\frac{d}{dt}\mu_t(S) = 0
  \qquad \text{for all } t\geq 0,
\end{equation}
where
\[
\ip{f}{\nu} := \int_S f\,d\nu\ = \sum_{x\in S} f(x)\,\nu (x)
\]
denotes the integral of a function $f:S\to\r $ w.r.t. a measure
$\nu $ on $S$. In particular,
\[
H_t = \frac{\partial}{\partial t}\mathcal{U}_t - \ip{\frac{\partial}{\partial
t}\mathcal{U}_t}{\mu_t}.
\]
In the applications we have in mind, the functions $\mathcal{U}_t$ are
given explicitly. Hence $H_t$ is known explicitly up to an additive
time-dependent constant. The evaluation of this constant, however,
would require computing an integral w.r.t. $\mu_t$.

\smallskip

If all the functions $H_t$, $t\geq 0$, vanish then $\mu_t=\mu_0$
for all $t\geq 0$. In this case the measures are invariant for a
Markov transition semigroup $(p_t)_{t\geq 0}$, i.e.,
\[
\mu_s p_{t-s} = \mu_t \qquad \text{for any } t\geq s\geq 0,
\]
provided the generator $\mathcal L$ of $(p_t)_{t\geq 0}$ satisfies
$\mu_0 \mathcal{L}=0$, i.e.
\[
\sum_{x \in S} \mu_0(x) \mathcal{L}(x,y)=0
\qquad \text{for any } y \in S.
\]
This fact is exploited in Markov Chain Monte Carlo methods for
approximating expectation values w.r.t. the measure $\mu_0$.  The
particle systems studied below can be applied for the same purpose
when the measures $\mu_t$ are time-dependent.

\subsection{Fokker-Planck equation and particle system
approximation}
To obtain approximations of the measures $\mu_t$, we consider
generators ($Q$-matrices) ${\mathcal L}_t$, $t\ge 0$, of a
time-inhomogeneous Markov process on $S$ satisfying the detailed
balance conditions
\begin{equation}
  \label{eq:db}
  \mu_t(x){\mathcal L}_t(x,y) = \mu_t(y){\mathcal L}_t(y,x)
      \quad \forall\ t\geq0, \; x,y\in S.
\end{equation}
For example, ${\mathcal L}_t$ could be the generator of a Metropolis
dynamics w.r.t.~$\mu_t$, i.e.,
\[
{\mathcal L}_t(x,y)\ =\ K_t(x,y)\cdot\min\left(
\frac{\mu_t(y)}{\mu_t (x)} ,1\right)\mbox{ for }x\neq y,
\]
${\mathcal L }_t(x,x)=-\sum_{y\neq x}{\mathcal L}_t(x,y)$, where the
proposal matrix $K_t$ is a given symmetric transition matrix on $S$.
In the sequel we will use the notation $\mathcal{L}_t^*\mu$ to denote
the adjoint action of the generator on a probability measure $\mu$, i.e.,
\[
(\mathcal{L}_t^*\mu)(y) := (\mu\mathcal{L}_t)(y) = 
\sum_{x \in S} \mu(x) \mathcal{L}_t(x,y).
\]
By (\ref{eq:db}), $\mathcal{L}_t^\ast\mu_t=0$, i.e.,
\[
\ip{{\mathcal L_t}f}{\mu_t} = 0
\qquad \text{for any } f:S\to\r \text{ and } t\geq 0.
\]
We fix non-negative constants $\lambda_t$, $t\geq 0$, such that $t
\mapsto \lambda_t$ is continuous. Since the state space $S$ is finite,
the measures $\mu_t$ are the \emph{unique} solution of the evolution equation
for measures
\begin{equation}
  \label{eq:FP}
  \frac{\partial}{\partial t}\nu_t =
  \lambda_t\,{\mathcal L}_t^\ast \nu_t - H_t\nu_t
\end{equation}
with initial condition $\nu_0=\mu_0$. In general, solutions of
(\ref{eq:FP}) are not necessarily probability measures, even if
$\nu_0$ is a probability measure. Therefore, we consider the equation
\begin{equation}
  \label{eq:FPnorm}
  \frac{\partial}{\partial t}\eta_t\ =\
  \lambda_t\, {\mathcal L}_t^\ast \eta_t\, -\, H_t\eta_t\, +\,
  \ip{H_t}{\eta_t}\,\eta_t
\end{equation}
satisfied by the normalized measures $\eta_t=\frac{\nu_t}{\nu_t(S)}$.
Note that, by (\ref{eq:centered}), $\mu_t$ also solves
(\ref{eq:FPnorm}). Moreover, if $\eta_t$ is a solution of
(\ref{eq:FPnorm}), then
\[
\nu_t = \exp\left( -\int_0^t \ip{H_s}{\eta_s}\,ds \right)\,\eta_t
\]
is the unique solution of (\ref{eq:FP}) with initial condition
$\nu_0=\eta_0$.

\medskip

The Fokker-Planck equation (\ref{eq:FPnorm}) is an evolution equation
for probability measures which, in contrast to the unnormalized
equation, is not modified by adding constants to the functions $H_t$.
We now introduce interacting particle systems that discretize the
evolution equations (\ref{eq:FPnorm}) and (\ref{eq:FP}). Consider
right continuous time-inhomogeneous Markov processes $(X_t^N,
\mathbb{P})$, $N\in\mathbb{N}$, with state space $S^N$ and generators
at time $t$ given by
\begin{equation}
  \label{eq:generator} 
  \begin{split}
    \mathcal{L}_t^N\varphi (x_1,\ldots ,x_N) &=
    \lambda_t\sum_{i=1}^N\mathcal{L}_t^{(i)}\varphi (x_1,\ldots,x_N)\\
    &\quad +\frac 1N\sum_{i,j=1}^N(H_t(x_i)-H_t(x_j))^+ (\varphi
    (x^{i\to j})-\varphi (x)).
  \end{split}
\end{equation}
Here $x=(x_1,\ldots ,x_N)\in S^N$ and
\[
(x^{i\to j})_k\ =\ \left\{\begin{array}{ll} x_k &\mbox{if }k\neq i,\\
x_j &\mbox{if }k=i. \end{array}   \right.
\]
Moreover, $\mathcal{L}_t^{(i)}$ stands for the operator
$\mathcal{L}_t$ applied to the $i$-th component of $x$. Thus the
components $X_{t,i}^{N}$, $i=1,\ldots , N$, of the process $X_t^{N}$
move like independent Markov processes with generator
$\lambda_t\mathcal{L}_t$ and are occasionally replaced by components
with a lower value of $H_t$. Note that to compute the generator (and
hence to simulate the Markov process) it is enough to know the
functions $H_t$ up to an additive constant.

\smallskip

Discretizations of interacting particle systems of a similar type are
widely used in applications, where mostly the time parameter is
discrete. Variants appear in the literature under different names,
including sequential Monte Carlo methods (e.g. in
\cite{DMDJ,DouMou,DdFG}), population Monte Carlo algorithms
\cite{CGMR,DGMR,Rou-phd,Rou}, Feynman-Kac particle models
\cite{cerone,DM,MM}), particle filters \cite{BBL,BLB,chop-stat}),
etc. Theoretical properties of these Monte Carlo methods and, in
particular, the asymptotics as $N \to \infty$, have been studied
intensively (mostly in discrete time), see e.g. \cite{cappe,DM} for an
overview, and \cite{cerone,JasDou} for more recent results. The
continuous time case has been investigated in \cite{MM,Rou-phd,Rou}.

The Markov processes $(X_t^N,\P)$ introduced above are continuous-time
analogues of a particular type of sequential Monte Carlo samplers
which have been introduced and studied systematically in \cite{DMDJ}
(cf. also \cite{chop-stat,DMDJ2,Jarz,Neal}). One major motivation for
the use of SMC samplers is the estimation of expectation values with
respect to multimodal distributions where traditional MCMC methods
fail due to metastability problems. The processes $(X_t^N,\P)$ have
the additional property that the underlying generator at time $t$
satisfies detailed balance w.r.t $\mu_t$. In this case, the resulting
sequential MCMC methods are also related to several
multi-level sampling methods, including parallel tempering
\cite{Geyer91,HukNem,MZ} and the equi-energy sampler \cite{KZW}. The
detailed balance condition is not necessarily required for
applications, but it fixes a clear framework that is the foundation
for our $L^p$ approach developed below.

\smallskip

It is essentially well-known (see \cite{MM}) that if the initial
distributions of the Markov processes $(X_t^N,\P)$ are the $N$-fold
products $\pi^N$ of a probability measure $\pi$ on $S$, then almost
surely, the empirical distributions
\begin{equation}
  \label{eq:EMP}
  \eta_t^N = \frac 1N\sum_{i=1}^N\delta_{X^N_{t,i}}
\end{equation}
and the reweighted empirical distributions
\begin{equation}
  \label{eq:REWEMP}
  \nu_t^N = \exp\left(-\int_0^t\ip{H_s}{\eta_s^N } \right)\,\eta_t^N
\end{equation}
converge to the solutions of the equations (\ref{eq:FPnorm}) and
(\ref{eq:FP}) with initial conditions $\eta_0=\nu_0=\pi$, see also
Corollary \ref{cor:estf} below. As a consequence, simulating the
Markov process $X^N_t$ with initial distribution $\mu_0^N$ yields a
Monte Carlo method for approximating sequentially the probability
measures $\mu_t$, $t\ge 0$, which can be viewed as a combination of
Markov Chain Monte Carlo and Importance Sampling/Resampling.

\subsection{Quantitative convergence bounds}
Our main aim is to quantify more explicitly the approximation
properties of the particle systems with initial distribution
$\mu_0^N$. There is a substantial literature on asymptotic properties
of corresponding particle system approximations, see
e.g. \cite{DM,MM,Rou-phd} and references therein. In particular, a law
of large numbers type convergence theorem and a corresponding central
limit theorem have been established in \cite{DMG,MM} for a related
particle system approximation, cf. also \cite{Rou}. A crucial question
for algorithmic applications, however, are quantitative bounds on the
approximation error
\begin{equation}
  \label{eq:ERR}
  \langle f,\eta_t^N\rangle -\langle f,\mu_t\rangle
\end{equation}
for a given function $f:S\to\mathbb{R}$ and fixed $N$ that incorporate
some more explicit control of the constants. For example, the
dependence of the bounds on the dimension in product models is very
relevant.

The central limit theorem in \cite{MM} yields bounds for the
approximation error (\ref{eq:ERR}) asymptotically as $N\to\infty$ (at
least for a modified particle system). In \cite{Rou} corresponding
non-asymptotic estimates are given but without quantifying the
constants. We also refer to \cite{cerone} for some more recent
non-asymptotic estimates under strong mixing conditions in discrete
time.
In this respect, several important questions still remain open:
\begin{itemize}
\item The expression for the asymptotic variance in the central limit
  theorem derived in \cite{MM} is not very explicit, as it involves
  $L^2$ norms of an associated Feynman-Kac semigroup.  Methods that
  allow to bound this expression \emph{efficiently} in a general setup
  and in concrete models have to be developed.
\item For applications it is crucial to derive more explicit
  \emph{non-asymptotic bounds} (i.e. bounds for fixed $N$), because
  the asymptotic estimates could be misleading when only a limited
  number of particles is available. To the best of our knowledge such
  bounds have been proven so far only under partially restrictive
  minorization (see \cite{white}) or strong mixing conditions
  involving constants that are not very explicit, highly
  dimension-dependent, and far from optimal. In general, tracking the
  constants in the proof of the CLT in \cite{MM} shows that these
  could be of order up to $\exp\int_0^t \osc(H_s)\,ds$, where
  $\osc(H_s):=\sup H_s - \inf H_s$ stands for the oscillation of
  $H_s$. In nearly all interesting applications this quantity is
  extremely large. Hence although the existing results give useful
  indications on scope and limits of SMC methods, the rigorous
  verification of a given error bound for a realistic number $N$ of
  particles/replicas is still an open problem in many simple concrete
  models.
\item Dimensional dependence on product spaces is an important issue,
  cf. \cite{BBL,BCJ,BLB}. Rigorous results about the dependence on the
  dimension of error bounds for SMC methods are still missing, and
  might be out of reach for the existing techniques.
\end{itemize}
It is well-known from the theory of reversible Markov processes that a
convergence analysis based only on total variation estimates and
Dobrushin contraction coefficients is possible but it has several
drawbacks. In particular, substantial contractivity w.r.t. the total
variation norm often takes place only after a certain number of steps
(cutoff phenomena, cf. e.g. \cite{Dia-cutoff,DLP}). This limits the
applicability if one is interested in arguments based on single or
even infinitesimal time steps. Moreover, minorization conditions that
are often imposed in this context are crude and typically dimension
dependent. Therefore, in this article we develop the foundations of an
alternative approach to establish non-asymptotic bounds for the
particle system approximations, which enables us to prove bounds with
a reasonable dependence on the dimension for product models, see
Example 2 below. The approach we propose is based on a consequent
application of $L^p$ estimates instead of uniform estimates for
Feynman-Kac propagators. In \cite{EM-proc} (cf. also \cite{St-var}),
an $L^2$ approach has been considered to quantify asymptotic stability
properties of the Fokker-Planck equation. When studying the error of
particle system approximations, we are forced to leave the $L^2$
framework and to work with various $L^p$ norms. A key tool are the
$L^p$ estimates for Feynman-Kac propagators that have been derived in
\cite{JMAA}.

\subsection{Outline} 
The main results of our work are stated in Section \ref{sec:mr}. Here
we also consider examples where the approximation errors can be
quantified explicitly. Section \ref{sec:mp} contains the derivation of
an explicit formula for the variances of the estimators
$\ip{f}{\nu_t^N}$, see Proposition \ref{thm:bondo} below. This is
based on martingale arguments developed in \cite{MM}. In Section
\ref{sec:p1} we apply the formula to prove Theorem \ref{thm:main}
below, which is a non-asymptotic bound for the variances. Finally, in
Section \ref{sec:p2} we combine this bound with the results from
\cite{JMAA} to prove the bounds in Theorems \ref{thm:main2} and
\ref{thm:main3} below.

\section{Main results} \label{sec:mr} To state our results in detail
let us consider the Markov process $(X_t^N,\mathbb{P})$ with initial
distribution $\mu_0^N$. To derive error bounds for the particle system
approximation it is convenient to consider at first the error for the
Monte Carlo estimates based on the reweighted empirical distributions
$\nu_t^N$ defined in (\ref{eq:REWEMP}). Following closely the
reasoning in \cite{MM}, we first note that, by a martingale argument,
it can be shown that $\ip{f}{\nu_t^N}$ is an unbiased estimator of
$\ip{f}{\mu_t}$ for any function $f:S \to \erre$ and $t\geq 0$, and an
explicit formula for the variance can be given.

\subsection{An expression for the variance}
To state the formula for the variance, we introduce Feynman-Kac type
transition operators $q_{s,t}$ related to the dynamics. For $0 \leq s
\leq t < \infty$ and a function $f:S\to\erre$, let $q_{s,t}f(x)$
denote the unique solution of the backward equation
\begin{equation}
  \label{eq:BWE}
 - \frac{\partial}{\partial s} q_{s,t}f = \lambda_s \L_s q_{s,t}f - H_sq_{s,t}f,
  \qquad s \in [0,t],
\end{equation}
with terminal condition $q_{t,t}f=f$. It can be shown that
$q_{s,t}f$ is also the unique solution of the corresponding
forward equation
\begin{equation}     \label{eq:FWE}
  \frac{\partial}{\partial t} q_{s,t}f = q_{s,t}(\lambda_t\L_t f - H_t f),
  \qquad t\in [s, \infty ),
\end{equation}
with initial condition $q_{s,s}f=f$. As a consequence, a
probabilistic representation of $q_{s,t}$ is given by the Feynman-Kac
formula
\begin{equation}       \label{eq:FK}
  (q_{s,t}f)(x) = \E_{s,x} \big[ e^{-\int_s^t H_r(X_r)\,dr} f(X_t)
  \big]\qquad\mbox{for all }x\in S,
\end{equation}
where $(X_t)_{t \geq s}$ is a time-inhomogeneous Markov process
w.r.t. $\P_{s,x}$ with generator $\L_t$ and initial condition
$X_s=x$ $\P_{s,x}$-a.s., see e.g. \cite{GS-II}, \cite{Guli}.
The next proposition is an adaptation of results in \cite[\S3.3]{MM}
to our slightly modified setting.
\begin{prop}     \label{thm:bondo}
  For any $f:S\to\mathbb{R}$,
  \begin{eqnarray*}
  \E\left[\ip{f}{\nu_t^N} \right] &=& \langle
  f,\mu_t\rangle ,\qquad\mbox{ and}\\
  \E\left[\big|\ip{f}{\nu_t^N} - \ip{f}{\mu_t}\big|^2\right] & =& \frac1N \var_{\mu_t}(f)
    + \frac1N \int_0^t \E\left[ V_{s,t}^N(f)\right]\,ds\, ,
  \end{eqnarray*}
 where
\begin{align}
    V_{s,t}^N(f) = & -\ip{H_s(q_{s,t}f)^2}{\nu_s^N} \ip{1}{\nu_s^N}
       - \ip{H_s}{\nu_s^N} \ip{q_{s,t}f^2-(q_{s,t}f)^2}{\nu_s^N}\nonumber\\
       & + \frac 12 \iint |H_s(z)-H_s(y)| (q_{s,t}f(z)-q_{s,t}f(y))^2
           \,\nu_s^N(dy)\,\nu_s^N(dz).     \label{eq:VN}
  \end{align}
\end{prop}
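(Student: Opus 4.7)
My approach is to exhibit the process $M_s:=\ip{q_{s,t}f}{\nu_s^N}$, $s\in[0,t]$, as a martingale with terminal value $\ip{f}{\nu_t^N}$, and then to compute $\E[M_t^2]$ through the predictable compensator of $[M]$. Throughout write $q:=q_{s,t}f$ and $E_s:=\exp\bigl(-\int_0^s\ip{H_r}{\eta_r^N}\,dr\bigr)$, so that $\nu_s^N=E_s\,\eta_s^N$ and $dE_s=-\ip{H_s}{\eta_s^N}E_s\,ds$.

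First (martingale property and unbiasedness), by the product rule $M_s=E_sG_s$ with $G_s:=\ip{q}{\eta_s^N}$, and the drift of $G_s$ has three contributions: (i) the time-derivative $\ip{-\lambda_s\L_sq+H_sq}{\eta_s^N}$ from the backward equation~\eqref{eq:BWE}; (ii) the MCMC contribution $\lambda_s\ip{\L_sq}{\eta_s^N}$ from $\mathcal{L}_s^N$; and (iii) the selection contribution, which by the elementary symmetrization identity
\[
\frac{1}{N^2}\sum_{i,j}(H_s(x_i)-H_s(x_j))^+(g(x_j)-g(x_i)) = \ip{H_s}{\eta^N}\ip{g}{\eta^N}-\ip{H_sg}{\eta^N}
\]
reduces to $\ip{H_s}{\eta_s^N}\ip{q}{\eta_s^N}-\ip{H_sq}{\eta_s^N}$. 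Multiplying by $E_s$ and adding the $G_s\,dE_s$ term, every contribution cancels, so $M_s$ is a bounded martingale. Since $\L_s^*\mu_s=0$ by~\eqref{eq:db} and $\mu_s$ solves~\eqref{eq:FP}, the map $s\mapsto\ip{q_{s,t}f}{\mu_s}$ is constant equal to $\ip{f}{\mu_t}$; together with the product form of $\mu_0^N$ this gives $\E[M_0]=\ip{q_{0,t}f}{\mu_0}=\ip{f}{\mu_t}$, which is the first assertion.

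Second (quadratic variation), the initial second moment expands as $\E[M_0^2]=\tfrac{1}{N}\ip{(q_{0,t}f)^2}{\mu_0}+\tfrac{N-1}{N}\ip{f}{\mu_t}^2$. The predictable compensator of $[M]$ is obtained by listing the jumps of the particle system: MCMC jumps contribute $\lambda_s\L_s(x_i,y)(E_s(q(y)-q(x_i))/N)^2$ and selection jumps $\tfrac{1}{N}(H_s(x_i)-H_s(x_j))^+(E_s(q(x_j)-q(x_i))/N)^2$. Summing, using $\sum_y\L_s(x,y)(q(y)-q(x))^2=\L_sq^2(x)-2q(x)\L_sq(x)=:2\Gamma_s(q,q)(x)$ for the carré du champ of $\L_s$, and symmetrising the selection term yields
\[
\frac{d}{ds}\langle M\rangle_s = \frac{2\lambda_sE_s}{N}\ip{\Gamma_s(q,q)}{\nu_s^N} + \frac{1}{2N}\iint|H_s(z)-H_s(y)|(q(z)-q(y))^2\,\nu_s^N(dy)\,\nu_s^N(dz).
\]
The second summand already matches $1/N$ times the third term of $V_{s,t}^N$.

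Third (matching $V_{s,t}^N$), the delicate step is converting $\tfrac{1}{N}\var_{\mu_0}(q_{0,t}f)$ into $\tfrac{1}{N}\var_{\mu_t}(f)$ and absorbing the carré du champ term into the two product terms of $V_{s,t}^N$. I would introduce the auxiliary process $K_s:=\ip{q_{s,t}f^2-(q_{s,t}f)^2}{\nu_s^N}$. Since $q_{s,t}f^2-(q_{s,t}f)^2$ satisfies~\eqref{eq:BWE} with inhomogeneity $-2\lambda_s\Gamma_s(q,q)-H_sq^2$, the general drift formula established in Step~1 gives
\[
\mathrm{drift}(K_s) = -2\lambda_s\ip{\Gamma_s(q,q)}{\nu_s^N} - \ip{H_sq^2}{\nu_s^N},
\]
and the boundary data $K_t=0$, $\E[K_0]=\ip{f^2}{\mu_t}-\ip{(q_{0,t}f)^2}{\mu_0}$ supply exactly the desired conversion of variances. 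The remaining discrepancy reduces to the identity
\[
\int_0^t\E\bigl[(E_s-1)\,\mathrm{drift}(K_s) - K_s\ip{H_s}{\nu_s^N}\bigr]\,ds = 0,
\]
which is obtained by applying the integration-by-parts formula to the finite-variation product $K_s(E_s-1)$ and using $K_t=0$ and $E_0=1$. The main obstacle is recognising this auxiliary process $K_s$ and the telescoping identity that reconciles the carré du champ picture produced by the naive quadratic-variation computation with the product-of-integrals structure of $V_{s,t}^N$.
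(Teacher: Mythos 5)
Your proposal is correct and, while it shares with the paper the two pillars of the argument (the martingale $M_s=\ip{q_{s,t}f}{\nu_s^N}$, which is the paper's $A_{s,t}^f$, and a jump/carr\'e du champ computation of its increasing process), the bookkeeping in the final step is genuinely different. The paper introduces the auxiliary process $Y_u = \ip{1}{\nu_u^N}\ip{(q_{u,t}f)^2}{\nu_u^N}$ (Lemma 3.4) to re-express the Dirichlet-form part of $\langle A^f\rangle$, and separately computes $\E[\ip{1}{\nu_t^N}\ip{f^2}{\nu_t^N}]$ via the martingale $A^{f^2}$ (Lemma 3.5); combining those with $\var_{\mu_0}(q_{0,t}f)$ gives the result. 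You instead introduce the single process $K_s=\ip{q_{s,t}f^2-(q_{s,t}f)^2}{\nu_s^N}$, observe that $q_{s,t}f^2-(q_{s,t}f)^2$ solves the backward equation with a source term proportional to $\lambda_s\Gamma_s(q_{s,t}f)+H_s(q_{s,t}f)^2$, so that $\mathrm{drift}(K_s)=-\lambda_s\ip{\Gamma_s(q_{s,t}f)}{\nu_s^N}-\ip{H_s(q_{s,t}f)^2}{\nu_s^N}$, and note that the boundary values $K_t=0$, $\E[K_0]=\ip{f^2}{\mu_t}-\ip{(q_{0,t}f)^2}{\mu_0}$ provide the variance conversion $\var_{\mu_0}(q_{0,t}f)\to\var_{\mu_t}(f)$ in one stroke. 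The residual mismatch is exactly the weighted identity $\int_0^t\E\bigl[(\ip{1}{\nu_s^N}-1)\,\mathrm{drift}(K_s)-\ip{H_s}{\nu_s^N}K_s\bigr]\,ds=0$, which indeed follows by integration by parts applied to $K_s\bigl(\ip{1}{\nu_s^N}-1\bigr)$: the normalisation factor $\ip{1}{\nu_s^N}$ is continuous and of finite variation with $\frac{d}{ds}\ip{1}{\nu_s^N}=-\ip{H_s}{\nu_s^N}$, and the boundary terms vanish because $K_t=0$ and $\ip{1}{\nu_0^N}=1$. Both approaches are of the same length and rely on the same ingredients, but yours is a touch more unified: the two lemmas of the paper collapse into a single auxiliary process plus one Stieltjes integration by parts, and the role of the $\ip{1}{\nu_s^N}$ weights is more transparent. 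The one point worth tightening in a polished write-up is the sign convention for the inhomogeneity in the backward equation for $q_{s,t}f^2-(q_{s,t}f)^2$, since the sign you write for the source is the negative of what one gets with the paper's convention $-\partial_s\phi=\lambda_s\mathcal{L}_s\phi-H_s\phi+F_s$; your subsequent drift formula is however consistent, so this is purely a matter of stating the convention.
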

Here and in the following $\var_\mu(f):=\ip{f^2}{\mu}-\ip{f}{\mu}^2$
stands for the variance of $f$ with respect to the measure $\mu$.
Although the reasoning is very close to \cite{MM}, a complete proof of
Proposition \ref{thm:bondo} is given in Section \ref{sec:mp} below for
the reader's convenience.

\smallskip

Elementary estimates show that the approximation error
(\ref{eq:ERR}) for estimates based on the empirical distributions
$\eta_t^N$ can be controlled by the variance of estimators based on
$\nu_t^N$:
\begin{lemma}    \label{lm:nueta}
  For all functions $f:S \to \erre$ and $t \geq 0$ we have
  \begin{equation}     \label{eq:L2L2}
    \E \left[\left| \ip{f}{\eta_t^N}-\ip{f}{\mu_t} \right|^2\right]
    \leq 2 \var \left(\ip{f}{\nu_t^N}\right)\, +\,
    2 \|f-\ip{f}{\mu_t}\|_{\sup }^2 \var\left(\ip{1}{\nu_t^N}\right)
  \end{equation}
  and
  \begin{align}     \label{eq:L1L2}
    \E \left[\left| \ip{f}{\eta_t^N}-\ip{f}{\mu_t} \right|\right]
    &\leq \var \left(\ip{f}{\nu_t^N} \right)^{1/2} +
    \sqrt{2} \|f-\ip{f}{\mu_t}\|_{\sup }
    \var\left(\ip{1}{\nu_t^N}\right)\\
    &\quad + \sqrt 2 \,\var \left(\ip{f}{\nu_t^N} \right)^{1/2}
     \var \left(\ip{1}{\nu_t^N} \right)^{1/2},\nonumber
  \end{align}
  where $\|g\|_{\sup}:= \sup_{x\in S} |g(x)|$ for any $g:S \to \erre$.
\end{lemma}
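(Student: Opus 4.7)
The underlying observation is that, by (\ref{eq:REWEMP}), $\eta_t^N = \nu_t^N/\ip{1}{\nu_t^N}$: the reweighted empirical measure differs from the empirical measure only by multiplication with the positive scalar $\exp(-\int_0^t\ip{H_s}{\eta_s^N}\,ds) = \ip{1}{\nu_t^N}$, since $\eta_t^N$ is a probability measure. Setting $\bar f := f - \ip{f}{\mu_t}$, the left-hand sides of (\ref{eq:L2L2}) and (\ref{eq:L1L2}) equal $\E[|\ip{\bar f}{\eta_t^N}|^p]$ for $p=2,1$ respectively, because $\eta_t^N$ and $\mu_t$ are probability measures; moreover Proposition \ref{thm:bondo} gives $\E[\ip{\bar f}{\nu_t^N}] = \ip{\bar f}{\mu_t} = 0$ and $\E[\ip{1}{\nu_t^N}] = 1$.

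The key algebraic identity, obtained by writing $\ip{\bar f}{\nu_t^N} = \ip{1}{\nu_t^N}\,\ip{\bar f}{\eta_t^N}$ and rearranging, is
\[
\ip{\bar f}{\eta_t^N} \;=\; \ip{\bar f}{\nu_t^N} \;+\; \ip{\bar f}{\eta_t^N}\bigl(1 - \ip{1}{\nu_t^N}\bigr).
\]
To obtain (\ref{eq:L2L2}) I take absolute values, apply $(a+b)^2 \leq 2a^2 + 2b^2$, and bound the prefactor $|\ip{\bar f}{\eta_t^N}|$ in the second summand deterministically by $\|\bar f\|_{\sup}$ (since $\eta_t^N$ is a probability measure). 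Taking expectations produces $2\,\var(\ip{\bar f}{\nu_t^N}) + 2\|\bar f\|_{\sup}^2\,\var(\ip{1}{\nu_t^N})$.

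For (\ref{eq:L1L2}), the triangle inequality applied to the same identity yields $|\ip{\bar f}{\eta_t^N}| \leq |\ip{\bar f}{\nu_t^N}| + |\ip{\bar f}{\eta_t^N}|\cdot|1 - \ip{1}{\nu_t^N}|$. A single Cauchy--Schwarz turns the expectation of the first summand into $\var(\ip{\bar f}{\nu_t^N})^{1/2}$, and the expectation of the product is handled in the two complementary ways appearing in the statement: using the deterministic bound $|\ip{\bar f}{\eta_t^N}| \leq \|\bar f\|_{\sup}$ produces the $\|\bar f\|_{\sup}\,\var(\ip{1}{\nu_t^N})^{1/2}$ contribution, while a Cauchy--Schwarz on the two centered factors produces the cross term $\var(\ip{\bar f}{\nu_t^N})^{1/2}\var(\ip{1}{\nu_t^N})^{1/2}$; the $\sqrt 2$ factors arise from splitting $|\ip{\bar f}{\eta_t^N}|$ between these two approaches.

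The only non-routine step is essentially cosmetic: the bounds I obtain involve $\var(\ip{\bar f}{\nu_t^N})$, whereas the statement is phrased with $\var(\ip{f}{\nu_t^N})$. Since $\ip{\bar f}{\nu_t^N} = \ip{f}{\nu_t^N} - \ip{f}{\mu_t}\,\ip{1}{\nu_t^N}$, one either reads the lemma as applied with $\bar f$ throughout, or absorbs the residual $\ip{f}{\mu_t}^2\,\var(\ip{1}{\nu_t^N})$-type piece into the existing sup-norm contribution, at worst affecting the constants. Apart from this bookkeeping the argument is a short sequence of the triangle inequality, $(a+b)^2 \leq 2a^2 + 2b^2$, Cauchy--Schwarz, and the unbiasedness statements from Proposition \ref{thm:bondo}.
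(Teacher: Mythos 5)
Your argument for the $L^2$ bound (\ref{eq:L2L2}) is correct and is essentially the paper's proof verbatim: write $f_t:=f-\ip{f}{\mu_t}$, use $\ip{f_t}{\nu_t^N}=\ip{1}{\nu_t^N}\ip{f_t}{\eta_t^N}$, expand $\ip{f_t}{\eta_t^N}=\ip{f_t}{\nu_t^N}+\ip{f_t}{\eta_t^N}(1-\ip{1}{\nu_t^N})$, apply $(a+b)^2\le 2a^2+2b^2$, and bound $|\ip{f_t}{\eta_t^N}|\le\|f_t\|_{\sup}$. Your observation that the derivation actually yields $\var(\ip{f_t}{\nu_t^N})$ rather than $\var(\ip{f}{\nu_t^N})$ is also a correct reading of the paper's own argument; the lemma is in effect applied with the centered function (cf.\ Corollary \ref{cor:estf}), so this is a bookkeeping convention, not an error on your part.

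The $L^1$ argument, however, has a real gap. After the triangle inequality you must control $\E\big[|\ip{f_t}{\eta_t^N}|\,|1-\ip{1}{\nu_t^N}|\big]$, and the two ``complementary'' bounds you propose do not produce the terms in (\ref{eq:L1L2}). The deterministic bound $|\ip{f_t}{\eta_t^N}|\le\|f_t\|_{\sup}$ followed by Cauchy--Schwarz gives $\|f_t\|_{\sup}\,\var(\ip{1}{\nu_t^N})^{1/2}$, whereas the statement has $\var(\ip{1}{\nu_t^N})$ to the first power; and Cauchy--Schwarz on the two factors gives $\E[\ip{f_t}{\eta_t^N}^2]^{1/2}\var(\ip{1}{\nu_t^N})^{1/2}$, which involves $\eta_t^N$, not $\nu_t^N$, so it is not the cross term $\var(\ip{f_t}{\nu_t^N})^{1/2}\var(\ip{1}{\nu_t^N})^{1/2}$. ``Splitting $|\ip{f_t}{\eta_t^N}|$ between the two approaches'' cannot create $\sqrt{2}$ factors in front of both. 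The missing ingredient is to feed the already-proven $L^2$ bound back in: apply Cauchy--Schwarz once to get $\E[\ip{f_t}{\eta_t^N}^2]^{1/2}\var(\ip{1}{\nu_t^N})^{1/2}$, then use (\ref{eq:L2L2}) together with $\sqrt{2a+2b}\le\sqrt{2}\sqrt{a}+\sqrt{2}\sqrt{b}$ to bound $\E[\ip{f_t}{\eta_t^N}^2]^{1/2}$ by $\sqrt{2}\var(\ip{f_t}{\nu_t^N})^{1/2}+\sqrt{2}\|f_t\|_{\sup}\var(\ip{1}{\nu_t^N})^{1/2}$. Multiplying by $\var(\ip{1}{\nu_t^N})^{1/2}$ and adding the $\var(\ip{f_t}{\nu_t^N})^{1/2}$ from the first summand then yields exactly (\ref{eq:L1L2}); this is how the paper proceeds.
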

The proof is given in Section \ref{sec:p2} below.
\begin{rmk}
  A very interesting alternative expression for the variance of
  normalizing constants similar to $\ip{1}{\nu_t^N}$ in discrete time
  has recently been derived in \cite{cerone}.
\end{rmk}

\subsection{A quantitative variance bound}
Let $p \in [2,\infty[$. Our goal is to prove quantitative bounds for
the approximation errors that hold uniformly for all functions
$f:S \to \erre$ with $L^p$ norm less than one.  Because of Lemma
\ref{lm:nueta}, the errors can be quantified in terms of the
variance bounds
\begin{equation}
  \label{eq:ETNP}
  \varepsilon_t^{N,p} := \sup\, \left\{\left.
  \E \left[\big| \ip{f}{\nu_s^N}-\ip{f}{\mu_s} \big|^2\right]\,\right|
  \; f:S\to\erre \text{ s.t. }
  \|f\|_{L^p(\mu_s)} \leq 1, \; s\in[0,t] \right\}
\end{equation}
with $p\in [2,\infty)$.
To efficiently bound the quantities $\varepsilon_t^{N,p}$ we apply
estimates of $L^p$-$L^q$ operator norms for the operators $q_{s,t}$.
Corresponding estimates are derived systematically in \cite{JMAA}.  We
first state a general result that bounds the error in terms of the
expression (\ref{eq:211}) and appropriate operator norms, see Theorem
\ref{thm:main} below.

\smallskip

For $p$, $q \in [2,\infty]$ with $p \leq q$, let us consider the
operator norms
\begin{align*}
  C_{s,t}(p) &:= \sup_{f \neq 0} \frac{\|q_{s,t}f\|_{L^p(\mu_s)}}{%
                                    \|f\|_{L^p(\mu_t)}},\\
  C_{s,t}(p,q) &:= \sup_{f \neq 0} \frac{\|q_{s,t}f\|_{L^{2r}(\mu_s)}}{%
                                    \|f\|_{L^p(\mu_t)}}
    \vee \sup_{f \neq 0} \frac{\|q_{s,t}f\|_{L^p(\mu_s)}}{\|f\|_{L^{p/2}(\mu_t)}}
    \vee 1,
\end{align*}
where $r \in [p,\infty]$ is chosen such that $p^{-1}=q^{-1}+r^{-1}$.
Moreover, for $\delta>0$, we set
\begin{equation*}
  \bar{C}_t(p,q,\delta) := \sup_{\tau \in [0,t]}
  \int_0^{(\tau-\delta)^+} \|H_s\|_{L^q(\mu_s)} C_{s,\tau}(p,q)^2\,ds.
\end{equation*}
We fix a constant $t_0>0$, and set
\begin{equation}
  \label{eq:119}
  \omega := \sup_{s \in [0,t_0]} \osc(H_s),
\end{equation}
where $\osc(f):=\sup f - \inf f$.
Since $H_s=-\frac{\partial}{\partial s} \log\mu_s$, the constant
$\omega$ controls the logarithmic time change rate of the measures
$\mu_t$. Note that
\[
\bar{C}_t(p,q,\delta) \leq t\,\omega \sup\big\{ C_{s,\tau}(p,q)^2\,\big|\,
s,\tau \in [0,t] \text{ s.t. } \tau \geq s+\delta \big\}.
\]
\begin{rmk}
  Since we assume that the state space is finite, all the constants
  are finite, but their numerical values can be very large.  It is a
  straightforward consequence of the forward equation (\ref{eq:FWE})
  that
  \begin{equation}
    \label{eq:INV}
    \mu_s q_{s,t} = \mu_t, \qquad 0 \leq s \leq t,
  \end{equation}
  and hence $C_{s,t}(1) = 1$. On the other hand, in contrast to Markov
  transition operators which are contractions on $L^\infty$, the
  constants $C_{s,t}(\infty)$ can be extremely large in typical
  applications. Therefore bounds on $C_{s,t}(p)$ are very sensitive to
  the choice of $p$, see \cite{JMAA} for details. The constants
  $C_{s,t}(p,q)$ and $\bar C_t(p,q,\delta )$ are related to
  hyperbound properties and can only be expected to be bounded
  in a feasible way if $t-s$ and $\delta$, respectively, are not too
  small.
\end{rmk}

For a function $f:S\to\mathbb{R}$, set
\begin{equation}\label{eq:V}
   V_{s,t}(f) := -\ip{H_s(q_{s,t}f)^2}{\mu_s}
        + \iint |H_s(x)| (q_{s,t}f(y)-q_{s,t}f(x))^2
           \,\mu_s(dx)\,\mu_s(dy).
\end{equation}
Our first main result shows that for $p>4$ the asymptotic (as $N
\to \infty$) variance of the estimator $\ip{f}{\nu_t^N}$ is
bounded from above by
\begin{equation}     \label{eq:211}
N^{-1} \, \Big( \var_{\mu_t} (f) + \int_0^t V_{s,t}(f)\,ds +
\|f\|^2_{L^p(\mu_t)} \Big),
\end{equation}
and, more importantly, it gives a non-asymptotic bound for the mean
square error $\var \left(\ip{f}{\nu_t^N}\right)$ of the same order:
\begin{thm}     \label{thm:main}
  Fix $q\in ]6,\infty]$ and $p \in
  ]\frac{4q}{q-2},q[$. Let $N \in \mathbb{N}$ be such that
  \[
  N \geq 25 \max \big( 2, \bar{C}_{t_0}(p,q,\delta),
  \bar{C}_{t_0}(\tilde{p},q,\delta) \big),
  \]
  where $\tilde{p}$ is defined by $\tilde{p}^{-1}=q^{-1}+(p/2)^{-1}$
  and $\delta:= (17\omega)^{-1}$.
  Then, for $t\in [0,t_0]$,
  \begin{equation}     \label{eq:main1}
  \begin{split}
    N \, \E\left[ |\ip{f}{\nu_t^N} - \ip{f}{\mu_t}|^2\right]\ \leq\
    & \var_{\mu_t}(f)
    + \int_0^t V_{s,t}(f)\,ds\\
    & + \left[ 1+7\bar{C}_t(p,q,\delta) \varepsilon_t^{N,p}\right]
        \,\|f\|^2_{L^p(\mu_t)}.
  \end{split}
  \end{equation}
  In particular,
  \begin{equation}     \label{eq:main2}
    \varepsilon_t^{N,p} \leq (2+v_t(p)) N^{-1}
    \big(1+10\bar{C}_t(p,q,\delta) N^{-1}\big)
  \end{equation}
  where
  \[
  v_t(p) := \sup_{\tau\in[0,t]} \sup_{f \neq 0} \frac{\int_0^\tau
    V_{s,\tau}(f)\,ds}{\|f\|^2_{L^p(\mu_\tau)}}.
  \]
\end{thm}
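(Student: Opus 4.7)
The natural starting point is Proposition \ref{thm:bondo}, which already provides an exact formula for $\E|\ip{f}{\nu_t^N}-\ip{f}{\mu_t}|^2$. The task therefore reduces to bounding $\E[V_{s,t}^N(f)]$ by its deterministic analogue $V_{s,t}(f)$ plus a controllable fluctuation. My plan is to expand $\nu_s^N=\mu_s+(\nu_s^N-\mu_s)$ in each of the three summands of (\ref{eq:VN}). Using $\mu_s(S)=1$, the centering identity $\ip{H_s}{\mu_s}=0$, and the pointwise inequality $|H_s(z)-H_s(y)|\le|H_s(z)|+|H_s(y)|$ followed by symmetrization, the ``deterministic part'' of $\E[V_{s,t}^N(f)]$ collapses to exactly $V_{s,t}(f)$ defined in (\ref{eq:V}). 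The remaining error is a sum of cross and quadratic fluctuation terms of the generic form $\ip{g}{\nu_s^N-\mu_s}\cdot(\text{bounded})$ and $\ip{g}{\nu_s^N-\mu_s}\cdot\ip{h}{\nu_s^N-\mu_s}$, which I estimate by Cauchy--Schwarz combined with the defining inequality $\E|\ip{g}{\nu_s^N}-\ip{g}{\mu_s}|^2\le\varepsilon_s^{N,p'}\|g\|^2_{L^{p'}(\mu_s)}$ (using that $\varepsilon_s^{N,p'}$ is nondecreasing in $s$).

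The $L^{p'}$ norms of the test functions $g$, which after a Hölder splitting take the form $\|H_s\|_{L^q(\mu_s)}$ times an $L^{p/2}(\mu_s)$ or $L^p(\mu_s)$ norm of $q_{s,t}f$ or $q_{s,t}(f^2)$, are then controlled by the hypercontractive estimates from \cite{JMAA},
\[
\|q_{s,\tau}f\|_{L^{2r}(\mu_s)}\le C_{s,\tau}(p,q)\,\|f\|_{L^p(\mu_\tau)},\qquad
\|q_{s,\tau}h\|_{L^p(\mu_s)}\le C_{s,\tau}(p,q)\,\|h\|_{L^{p/2}(\mu_\tau)}.
\]
Applying the second bound to $h=f^2$, and combining the first with Jensen, covers all the norms arising from the product $(q_{s,t}f)^2$; the two admissible choices $p'=p$ and $p'=\tilde p$ (with $\tilde p^{-1}=q^{-1}+(p/2)^{-1}$) correspond to the two Hölder factorizations of these test functions, which is why the hypothesis on $N$ involves both $\bar C_{t_0}(p,q,\delta)$ and $\bar C_{t_0}(\tilde p,q,\delta)$. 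After integration over $s\in[0,t]$, each error term acquires a factor $\int_0^t\|H_s\|_{L^q(\mu_s)}C_{s,t}(p,q)^2\,ds$, which is exactly $\bar C_t(p,q,\delta)$ outside the short window $[t-\delta,t]$. On that window the hypercontractive constants need not be finite, but the elementary Feynman--Kac bound $|q_{s,t}f|\le e^{\omega(t-s)}p_{s,t}|f|$ combined with $|H_r|\le\omega$ (which follows from $\ip{H_r}{\mu_r}=0$ and $\osc(H_r)\le\omega$) yields factors no larger than $e^{\omega\delta}=e^{1/17}$. This dictates the choice $\delta=(17\omega)^{-1}$ and, after summing both regimes, produces the constant $7$ in front of $\bar C_t(p,q,\delta)\,\varepsilon_t^{N,p}$ in (\ref{eq:main1}).

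The companion bound (\ref{eq:main2}) then follows algebraically. Taking the supremum over $\tau\in[0,t]$ and over $f$ with $\|f\|_{L^p(\mu_\tau)}\le1$, using $\var_{\mu_\tau}(f)\le\|f\|^2_{L^p(\mu_\tau)}$ and the definition of $v_t(p)$, one obtains the self-consistency inequality
\[
\varepsilon_t^{N,p}\le N^{-1}(2+v_t(p))+7N^{-1}\bar C_t(p,q,\delta)\,\varepsilon_t^{N,p}.
\]
The hypothesis $N\ge25\bar C_{t_0}(p,q,\delta)$ gives $7N^{-1}\bar C_t\le 7/25<3/10$, and the elementary inequality $(1-x)^{-1}\le1+(10/7)x$ valid for $x\le3/10$ converts this into the claimed factor $1+10\bar C_t(p,q,\delta)N^{-1}$. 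The main difficulty I anticipate is not any single step but the bookkeeping of exponents: every application of Cauchy--Schwarz/Hölder must produce functions whose $L^{p'}$ norms are dominated by exactly one of $C_{s,t}(p,q)$ or $C_{s,t}(\tilde p,q)$. In particular, the symmetric double integral in the third summand of (\ref{eq:VN}) must be rewritten via $\iint(g(z)-g(y))^2\,d\mu\,d\mu=2\var_\mu(g)\le 2\ip{g^2}{\mu}$ and further decomposed to match the available operator-norm bounds; pushing the numerical constant down to $7$, rather than some larger generic value, is precisely what forces the specific choice $\delta=(17\omega)^{-1}$ and the threshold $25$ in the hypothesis on $N$.
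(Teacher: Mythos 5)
Your high-level strategy matches the paper's: start from Proposition~\ref{thm:bondo}, bound $\E[V^N_{s,t}(f)]$ by $V_{s,t}(f)$ plus a fluctuation term using Cauchy--Schwarz and the definition of $\varepsilon_s^{N,p'}$, control the fluctuation by hypercontractive $L^p$--$L^q$ estimates from \cite{JMAA}, split $[0,t]$ at $t-\delta$, handle the tail $[t-\delta,t]$ with the elementary Feynman--Kac bound, and close with a self-consistency inequality. The final algebraic step (your $(1-x)^{-1}\le 1+\tfrac{10}{7}x$ for $x\le 3/10$) is a valid alternative to the paper's direct expansion and yields the same constant.

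However, there is a real gap: you never establish the a priori bound $\varepsilon_t^{N,\tilde p}<1$, and without it the argument does not close. On the short window $[t-\delta,t]$, the only usable Cauchy--Schwarz step is the one with exponent $\tilde p$ (your second Hölder factorization), which leaves a factor $\varepsilon_s^{N,\tilde p}$ in the error. Since $\tilde p<p$, the inclusion $\|f\|_{L^{\tilde p}}\le\|f\|_{L^p}$ goes the wrong way: $\varepsilon_s^{N,\tilde p}\ge\varepsilon_s^{N,p}$, so you cannot absorb it into the quantity you are trying to bound. The paper resolves this with a bootstrap (Lemma~\ref{lm:rough}, proved from the crude Lemma~\ref{lm:short}): under $N\ge 25\max(1,\bar C_t(\tilde p,q,\delta))$ one first shows $\varepsilon_t^{N,\tilde p}<1$, and then Lemma~\ref{prop:tilde} uses this to turn the short-window contribution into a bounded constant -- which is precisely the ``$1$'' in the bracket of \eqref{eq:main1}, not a multiple of $\varepsilon_t^{N,p}$ as your phrasing suggests. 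Consequently, your explanation of why $\bar C_{t_0}(\tilde p,q,\delta)$ enters the hypothesis on $N$ (``two Hölder factorizations'') is a misattribution: its role is exclusively to trigger the a priori estimate for $\tilde p$. Without supplying this a priori step, your self-consistency inequality would carry an uncontrolled term $c\,\omega\delta\,\sup_s\varepsilon_s^{N,\tilde p}$ and the deduction of \eqref{eq:main2} would fail.
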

The proof is given in Section \ref{sec:p1} below.
To apply Theorem \ref{thm:main} we need bounds for the constants
$v_t(p)$ and $\bar{C}_t(p,q,\delta )$. We will now discuss how to
derive such bounds from Poincar\'{e} and logarithmic Sobolev
inequalities in the following particular cases:
\begin{itemize}
\item[a)] The Markov processes with generators $\mathcal{L}_t$, $t\ge
  0$, have ``good'' global mixing properties (see \S\ref{sect14}).
\item[b)] The state space $S$ can be decomposed into disjoint subsets
  $S_i$, $i\in I$, such that $\mathcal{L}_t(x,y)=0$ for all $t\ge 0$,
  $x\in S_i$ and $y\in S_j$ with $i\neq j$, and ``good'' mixing
  properties hold on each of the subsets $S_i$ (see \S\ref{sect15}).
\end{itemize}

\subsection{Non-asymptotic bounds from global Poincar\'{e} and log
Sobolev inequalities}\label{sect14}
For $t \geq 0$ and $q\in[1,\infty]$ let us define
\[
K_t(q) = \int_0^t \|H_s\|_{L^q(\mu_s)}\,ds.
\]
The quantities $K_t(q)$ are a way to control how much the measures
$\mu_s$ change for $s\in[0,t]$. A rough estimate yields
\begin{align}
  v_t(p) &\leq 5 K_t(2) \sup \big\{
  C_{s,\tau}(4)^2 \,\big|\, 0 \leq s \leq \tau \leq t \big\}&
  &\text{for any } p\geq 4, \label{eq:stella}\\
  \bar{C}_t(p,q,\delta )  &\leq K_t(q) \sup\big\{
  C_{s,\tau}(p,q)^2 \,\big|\, 0\leq s \leq s+\delta \leq \tau \leq t \big\}&
  &\text{for any } q\geq p\geq 1.  \label{eq:stst}
\end{align}
Hence estimates for $v_t(p)$ and $ \bar{C}_t(p,q,\delta )$ follow
from appropriate $L^p$-$L^q$ bounds for the Feynman-Kac propagators
$q_{s,t}$. In \cite{JMAA}, we derive such bounds systematically from
Poincar\'e and logarithmic Sobolev inequalities. To apply these
results let us define the weighted Poincar\'e and log Sobolev
constants
\begin{align*}
  A_t &:= \sup_{f\in\mathcal{S}_0} \frac{-\int H_t f^2\,d\mu_t}{\EE_t(f)},\\
  B_t &:= \sup_{f\in\mathcal{S}_0}
               \frac{\big|\int H_t f\,d\mu_t\big|^2}{\EE_t(f)},\\
  \gamma_t &:= \sup_{f\in\mathcal{S}_1}
                   \frac{\int f^2\,\log |f|\,d\mu_t}{\EE_t(f)},
\end{align*}
where $\mathcal{S}_0 = \big\{ f:S \to \erre |\; \ip{f}{\mu_t}=0, \;
f\not\equiv 0 \big\}$,
$\mathcal{S}_1 = \big\{ f:S \to \erre |\; \ip{f^2}{\mu_t}=1, \;
f\not\equiv 1 \big\}$,
and
\[
\EE_t(f)\ =\ - \int f \L_tf\,d\mu_t\ =\ \frac12 \sum_{x,y\in S}
(f(y)-f(x))^2\L_t(x,y)\mu_t(x)
\]
denotes the Dirichlet form of the self-adjoint operator $\L_t$ on
$L^2(S,\mu_t)$. We refer to \cite{SC} for background on Poincar\'e
and logarithmic Sobolev inequalities and their applications to
estimate $L^p$ contractivity properties of transition semigroups and
mixing times of reversible time-homogeneous Markov chains. In
\cite{JMAA} we apply similar techniques to derive $L^p$-$L^q$ bounds
for Feynman-Kac propagators. We show that $C_{s,t}(p)$ and
$C_{s,t}(p,q)$ are small (in particular less than 2) if the
intensities $\lambda_s$, $0 \leq s \leq t$, of MCMC moves are
sufficiently large in terms of the constants $A_s$, $B_s$ and
$\gamma_s$, respectively. By combining these results with Theorem
\ref{thm:main} we obtain:
\begin{thm}     \label{thm:main2}
  Fix $t_0 \geq 0$, $q\in ]6,\infty [$ and $p \in ]\frac{4q}{q-2},q[$.
  Suppose that
  \begin{equation}    \label{eq:Nr}
    N \geq 40 \, \max (K_{t_0}(q),1), \qquad\text{and}
  \end{equation}
  \begin{equation}    \label{eq:2ast}
    \lambda_s  \geq \max\,\left( \frac{pA_s}4 + \frac{p(p+3)}{4}\, t_0B_s\; ,\
    \frac{17}{4} a(p,q)\,\omega\gamma_s\right)
    \qquad \text{for all } s\in [0,t_0],
  \end{equation}
  where $\omega$ is defined by (\ref{eq:119}) and
  \[
  a(p,q) := \log \max
  \left(\frac{2r-1}{p-1},\frac{2p-2}{p-2},\frac{p-1}{\tilde{p}-1},
    \frac{2\tilde{p}-2}{\tilde{p}-2}\right),
  \]
  with $\tilde{p}$ and $r$ determined by $\tilde{p}^{-1}=q^{-1}+2p^{-1}$
  and $p^{-1}=q^{-1}+r^{-1}$. Then, for $t\in [0,t_0]$,
  \begin{equation}
    \label{eq:main3}
    \varepsilon_t^{N,p} \ \leq\ (2+8\, K_t(2)) \, N^{-1}
    \big(1+16K_t(q)N^{-1}\big).
  \end{equation}
\end{thm}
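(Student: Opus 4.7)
The plan is to deduce Theorem~\ref{thm:main2} from Theorem~\ref{thm:main} by translating the weighted Poincar\'e and log-Sobolev hypotheses in (\ref{eq:2ast}) into effective bounds on the Feynman--Kac operator norms $C_{s,\tau}(4)$, $C_{s,\tau}(p,q)$ and $C_{s,\tau}(\tilde{p},q)$ using the results of \cite{JMAA}, and then feeding these into the preliminary estimates (\ref{eq:stella}) and (\ref{eq:stst}) to bound $v_t(p)$ and $\bar{C}_t(p,q,\delta)$ in terms of $K_t(2)$ and $K_t(q)$. Inserting the resulting bounds into (\ref{eq:main2}) will yield (\ref{eq:main3}).

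For the intrinsic variance factor $v_t(p)$, the first clause of (\ref{eq:2ast}), namely $\lambda_s\geq pA_s/4+p(p+3)t_0B_s/4$, is precisely the Poincar\'e-type condition under which the propagator $q_{s,t}$ is (nearly) a contraction on $L^p(\mu_\cdot)$. Applying the corresponding $L^p$ estimate from \cite{JMAA} with $p=4$ (admissible because $p>4q/(q-2)>4$ in Theorem~\ref{thm:main} forces $4\leq p$) gives $C_{s,\tau}(4)^2\leq 8/5$; combined with (\ref{eq:stella}), this yields $v_t(p)\leq 8K_t(2)$. For the hypercontractive norms $C_{s,\tau}(p,q)$ and $C_{s,\tau}(\tilde p,q)$, which involve the $L^{p/2}\!\to\!L^p$ and $L^p\!\to\!L^{2r}$ jumps (and their analogues for $\tilde p$), the second clause of (\ref{eq:2ast}) together with $\delta=(17\omega)^{-1}$ is calibrated so that $\lambda_s\delta/\gamma_s$ exceeds the logarithmic threshold $a(p,q)/4$ appearing in the log-Sobolev-based hypercontractivity estimates of \cite{JMAA}; the quantity $a(p,q)$ is exactly the largest of the four log-ratios of the $L^{p_i}$ indices in play. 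This yields $C_{s,\tau}(p,q), C_{s,\tau}(\tilde p,q)\leq\sqrt{8/5}$ whenever $\tau-s\geq\delta$, so that (\ref{eq:stst}) produces $\bar C_t(p,q,\delta)\vee\bar C_t(\tilde p,q,\delta)\leq 1.6\,K_t(q)$.

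It remains to verify that the lower bound $N\geq 25\max(2,\bar C_{t_0}(p,q,\delta),\bar C_{t_0}(\tilde p,q,\delta))$ required by Theorem~\ref{thm:main} is implied by (\ref{eq:Nr}): indeed $25\cdot 1.6\,K_{t_0}(q)=40\,K_{t_0}(q)$, matching the hypothesis exactly. Plugging the bounds $v_t(p)\leq 8K_t(2)$ and $10\,\bar C_t(p,q,\delta)\leq 16\,K_t(q)$ into (\ref{eq:main2}) then yields (\ref{eq:main3}). The main obstacle is the careful bookkeeping of numerical constants in the $L^p$-contractivity and hypercontractivity bounds of \cite{JMAA}: one must check that the specific factors $\tfrac14$, $\tfrac{p+3}{4}$, $\tfrac{17}{4}$ and $\tfrac{1}{17}$ appearing in (\ref{eq:2ast}) and in the definition of $\delta$ are exactly those needed to force both $C_{s,\tau}(4)$ and the hypercontractive norms $C_{s,\tau}(p,q)$, $C_{s,\tau}(\tilde p,q)$ below the common explicit constant $\sqrt{8/5}$, so that the sharp factor $(2+8K_t(2))(1+16K_t(q)N^{-1})$ is achieved rather than a weaker expression.
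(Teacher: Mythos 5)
Your proof is correct and follows essentially the same route as the paper's: translate the Poincar\'e and log-Sobolev hypotheses into bounds on the Feynman--Kac operator norms via \cite{JMAA} (the paper obtains $C_{s,t}(p)\leq 2^{1/4}$ and $C_{s,t}(p,q)\leq 2^{1/4}e^{1/17}$, of which your $\sqrt{8/5}$ is a slightly looser but still valid upper bound since $2^{1/2}\approx 1.414<1.6$ and $2^{1/2}e^{2/17}\approx 1.591<1.6$), feed these through (\ref{eq:stella}), (\ref{eq:stst}) and (\ref{eq:main2}), and round up to the integer constants $8$, $16$, $40$. The one point you gloss over is that the hypothesis of Theorem~\ref{thm:main} requires $N\geq 25\max(2,\bar C_{t_0}(p,q,\delta),\bar C_{t_0}(\tilde p,q,\delta))\geq 50$, whereas (\ref{eq:Nr}) only guarantees $N\geq 40$ when $K_{t_0}(q)\leq 5/4$ -- but the paper's own proof ("the assertion now follows from Theorem~\ref{thm:main}") elides this same minor numerical slack, so you have faithfully reproduced the argument.
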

Note that the assumptions on $p$ and $q$ guarantee that $\tilde{p}>2$,
so that $a(p,q)$ is finite. The proof of the theorem is given in
Section \ref{sec:p2} below.
\begin{rmk}     \label{rmk:main2}
  (i) The theorem shows that if the intensities $\lambda_s$ are large
  enough, then already a limited number of particles/replicas suffices
  to obtain reasonable error bounds. In particular, if (\ref{eq:2ast})
  holds, then, by (\ref{eq:main3}), a number
  \[
  N \geq \frac{3+10K_t(q)}{\alpha}
  \]
  of particles guarantees $\varepsilon_t^{N,p} \leq \alpha$
  for a given $\alpha \in ]0,1/8[$. In particular, as $\alpha \to 0$,
  a number of particles of order $O(K_t(q)/\alpha)$ is
  sufficient to bound the error by $\alpha$.

  \smallskip

  (ii) Rough bounds for the constants $K_t(q)$, $A_t$ and
  $B_t$ for $t \in [0,t_0]$ are given by
  \[
  K_t(q) \leq t\omega,
  \qquad A_t \leq C^{\mathrm{Poi}}_t\,\max H_t^-, 
  \qquad B_t \leq C^{\mathrm{Poi}}_t\,\var_{\mu_t}(H_t),
  \]
  where $\omega$ is defined by (\ref{eq:119}) and
  \[
  C^{\mathrm{Poi}}_t:= \sup_{f\in\mathcal{S}_0} \frac{\int
    f^2\,d\mu_t}{\EE_t(f)}
  \]
  denotes the Poincar\'e constant, i.e., the inverse spectral gap of
  the generator $\mathcal{L}_t$.  Therefore, assumptions (\ref{eq:Nr})
  and (\ref{eq:2ast}) in Theorem \ref{thm:main2} are satisfied if
  \begin{equation}
    \label{eq:CN}
    N \geq 40 \max (t_0\omega,1)
  \end{equation}
  and
  \begin{equation}
    \label{eq:Clambda}
    \lambda_s \geq \max\Big( \frac{p}{4}
    \big(\max H_s^- + t_0(p+3)\var_{\mu_s}(H_s)\big) C^{\mathrm{Poi}}_s,
    \frac{17}{4} a(p,q) \omega \gamma_s \Big).
  \end{equation}
\end{rmk}
Theorems \ref{thm:main} and \ref{thm:main2} provide non-asymptotic
bounds on the variances of the Monte Carlo estimators
$\ip{f}{\nu_t^N}$ that hold uniformly over all functions $f \in
L^p(\mu_t)$. One can combine these bounds with (\ref{eq:main1}) and
(\ref{eq:L1L2}) to obtain more precise non-asymptotic error bounds for
the Monte Carlo estimators $\ip{f}{\nu_t^N}$ and $\ip{f}{\eta_t^N}$
for a fixed function $f$:
\begin{coroll}     \label{cor:estf}
  Suppose that the assumptions of Theorem \ref{thm:main2} hold, and
  let $f \in L^p(\mu_t)$. Then
  \begin{align*}
  N \, \E \left[\big| \ip{f}{\nu_t^N} - \ip{f}{\mu_t} \big|^2\right]\ \leq\ &
  \var_{\mu_t} (f) + \int_0^t V_{s,t}(f)\,ds + \|f\|^2_{L^p(\mu_t)}
  + R(t,N) \, \|f\|^2_{L^p(\mu_t)},\\
  N^{1/2} \, \E\left[ \big| \ip{f}{\eta_t^N} - \ip{f}{\mu_t} \big|\right]\ \leq\ &
  \left( \var_{\mu_t} (f) + \int_0^t V_{s,t}(f)\,ds
  + \|f-\ip{f}{\mu_t}\|^2_{L^p(\mu_t)} \right)^{1/2}\\
  &\quad + \tilde{R}(t,N) \, \|f-\ip{f}{\mu_t}\|_{\sup }
  \end{align*}
  with explicit constants $R(t,N)$ of order $O(N^{-1})$ and
  $\tilde{R}(t,N)$ of order $O(N^{-1/2})$.
\end{coroll}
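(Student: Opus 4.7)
The strategy is to combine the variance bound \eqref{eq:main1} of Theorem~\ref{thm:main} with the deterministic comparison \eqref{eq:L1L2} of Lemma~\ref{lm:nueta}; the real task is to turn the implicit constants $R(t,N)$ and $\tilde{R}(t,N)$ into explicit $O(N^{-1})$ and $O(N^{-1/2})$ expressions under the hypotheses of Theorem~\ref{thm:main2}.

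For the first inequality I would take \eqref{eq:main1} as it stands and set $R(t,N) := 7\bar{C}_t(p,q,\delta)\,\varepsilon_t^{N,p}$. Under the lower bound \eqref{eq:2ast} on $\lambda_s$, the hypercontractivity estimates from \cite{JMAA} that already drive Theorem~\ref{thm:main2} yield $C_{s,\tau}(p,q) \leq 2$, so \eqref{eq:stst} gives $\bar{C}_t(p,q,\delta) \leq 4\,K_t(q)$. Combined with \eqref{eq:main3} this bounds $R(t,N)$ by $28\,K_t(q)\,(2+8K_t(2))\,N^{-1}\,(1+16K_t(q)N^{-1})$, which is $O(N^{-1})$ with fully explicit constants, and the first assertion follows.

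For the second inequality I would exploit $\ip{1}{\eta_t^N}=1$ to centre, namely $\ip{f}{\eta_t^N}-\ip{f}{\mu_t}=\ip{g}{\eta_t^N}-\ip{g}{\mu_t}$ with $g:=f-\ip{f}{\mu_t}$, and then apply \eqref{eq:L1L2} with $g$ in place of $f$. The variance $\var(\ip{g}{\nu_t^N})$ is controlled by the first inequality of the corollary applied to $g$ (using $\var_{\mu_t}(g)=\var_{\mu_t}(f)$ and $\|g\|_{L^p(\mu_t)}=\|f-\ip{f}{\mu_t}\|_{L^p(\mu_t)}$), while $\var(\ip{1}{\nu_t^N}) \leq \varepsilon_t^{N,p}=O(N^{-1})$ since $\|1\|_{L^p(\mu_t)}=1$. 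The first summand on the right of \eqref{eq:L1L2} then supplies the stated leading square-root term scaled by $N^{-1/2}$; the middle term is $O(N^{-1})$ through $\var(\ip{1}{\nu_t^N})$, and the cross term is $O(N^{-1})$ as a product of two $O(N^{-1/2})$ factors, so both can be packaged into $\tilde{R}(t,N)\,\|g\|_{\sup}$ with $\tilde{R}(t,N)$ of order $N^{-1/2}$.

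The main obstacle is really clerical: careful bookkeeping of the explicit dependence of $R(t,N)$ and $\tilde{R}(t,N)$ on $K_t(q)$, $K_t(2)$ and $\omega$. One mild conceptual subtlety is that applying the first inequality to $g$ naturally produces $V_{s,t}(g)$ rather than $V_{s,t}(f)$ inside the square root; because $q_{s,t}$ does not preserve constants, $V_{s,t}$ is not shift-invariant in $f$, so one either reads the second displayed inequality with $V_{s,t}$ evaluated on the centred function, or absorbs the discrepancy—expressible in terms of $\ip{f}{\mu_t}$ and $q_{s,t}1$—into $\tilde{R}(t,N)\,\|g\|_{\sup}$ at the cost of slightly larger but still $O(N^{-1/2})$ constants.
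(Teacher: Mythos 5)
Your strategy is the same as the paper's (very terse) proof: establish the first inequality by making (\ref{eq:main1}) explicit via (\ref{eq:main3}) and the hypercontractivity bounds on $\bar C_t$, then feed it into (\ref{eq:L1L2}) applied to the centered function $g=f-\ip{f}{\mu_t}$. The subtlety you identify about $V_{s,t}$ not being shift-invariant is a real imprecision in the stated corollary (and even in the wording of Lemma~\ref{lm:nueta}, whose proof produces $\var(\ip{f_t}{\nu_t^N})$, not $\var(\ip{f}{\nu_t^N})$); your first resolution, reading $\int_0^t V_{s,t}(\cdot)\,ds$ as evaluated on the centered function, is the correct one. Your alternative of absorbing $V_{s,t}(f)-V_{s,t}(g)$ into $\tilde R(t,N)\,\|g\|_{\sup}$ would not work: that discrepancy is $O(1)$ in $N$ and, being expressible through $\ip{f}{\mu_t}$ and $q_{s,t}1$, is not controlled by $\|g\|_{\sup}$ at all, so it can be neither pushed to order $O(N^{-1/2})$ nor bounded by the right norm.
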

The proof is given in Section \ref{sec:p2} below. 

\subsection{Scope and Examples}
Summarizing our results, we make the following observations: the
derived error bounds of a given size for the particle system
approximation rely on the following quantities:
\begin{itemize}
\item[(i)] A uniform upper bound on the oscillations of the
  logarithmic time derivatives $H_t=-\frac{\partial}{\partial t} \log
  \mu_t$.
\item[(ii)] A minimal intensity $\lambda_t$ of MCMC moves. A lower
  bound for the required intensity can be given in terms of the
  constants $A_t$, $B_t$ and $\gamma_t$, or alternatively in terms of
  $\omega$, $C_t^{\mathrm{Poi}}$ and $\gamma_t$.
\item[(iii)] A minimal number $N$ of particles. On a time interval of
  length $t_0$, a number of particles of order $O(\omega t_0
  \alpha^{-1})$ is sufficient to bound the error
  $\varepsilon_{t_0}^{N,p}$ by $\alpha$ (provided $\lambda_t$ is large
  enough).
\end{itemize}

We now illustrate range and limits of applicability of the results in
two examples. The first is a simple one-dimensional example, while the
second discusses the dimensional dependence of the estimates in the
case of product measures.

\subsection*{Example 1. Moving Gaussians -- one dimensional case}
Suppose that $S=\{a,a+1,\ldots,a+\Delta-1\}$ for some $a \in \mathbb{Z}$
and $\Delta \in \mathbb{N}$, and $(\mu_t)_{t \geq 0}$ are probability
measures on $S$ such that
\[
\mu_t(x) \propto \exp\Big( -\frac{(x-m_t)^2}{2\sigma_t^2} \Big),
\qquad
x \in S.
\]
We assume that $t \mapsto m_t$ and $t \mapsto \sigma_t$ are
continuously differentiable functions such that $\sigma_t \in
]0,\infty[$ and $m_t \in [a,a+\Delta-1]$ for all $t \geq 0$. Moreover,
we assume that the Markov chain moves are given by a Random Walk
Metropolis dynamics (in continuous time), that is,
\[
\mathcal{L}_t(x,y) = 
\begin{cases}
\frac12 \min\big( \frac{\mu_t(y)}{\mu_t(x)},1\big), &\text{if } |y-x|=1,\\
0, &\text{if } |y-x| \geq 1.
\end{cases}
\]
In this case, the following upper bounds for $C_t^{\mathrm{Poi}}$ and
$\gamma_t$ hold (see the Appendix):
\begin{align}
  \label{eq:B2}
  C_t^{\mathrm{Poi}} &\leq 30\big( (\sigma_t \wedge \Delta) \vee 2\big)^2\\
  \label{eq:B3}
  \gamma_t &\leq 300 \frac{\Delta^2}{(\sigma_t\wedge 1)^2}
                 + 300 \big( (\sigma_t \wedge \Delta) \vee 2\big)^2
                   \log \Delta
\end{align}
It can be shown that the upper bound for $C_t^{\mathrm{Poi}}$ is of
the correct order in $\sigma_t$ and $\Delta$. The upper bound for
$\gamma_t$ could be improved, but $\gamma_t$ is always bounded from
below by a positive multiple of $(\Delta/\sigma_t)^2$. Our results can
be applied in the following way.  For $t \geq 0$ and $x$, $y \in
S$ we have
\begin{align}
\nonumber
H_t(x)-H_t(y) &= \frac{\partial}{\partial t} \Big(
\frac{(x-m_t)^2}{\sigma_t^2} - \frac{(y-m_t)^2}{\sigma_t^2}
\Big)\\
\nonumber
&= -\frac{\sigma_t'}{\sigma_t} \frac{(x-y)(x+y-2m_t)}{\sigma_t^2}
-m'_t \frac{x-y}{\sigma_t^2}\\
\label{eq:B1}
&\leq \Big( 2\frac{|\sigma_t'|}{\sigma_t} + \frac{|m_t'|}{\Delta}
      \Big) \frac{\Delta^2}{\sigma_t^2}.
\end{align}
Therefore, if we choose the time scale in such a way that the condition
\begin{equation}
  \label{eq:A1}
  2\frac{|\sigma_t'|}{\sigma_t} + \frac{|m_t'|}{\Delta}
  \leq \frac{\sigma_t^2}{\Delta^2} \qquad \forall t \in [0,t_0]
\end{equation}
is satisfied, then
\[
\omega = \sup_{t\in[0,t_0]} \osc(H_t) \leq 1.
\]

\medskip

Condition (\ref{eq:A1}) is an upper bound on the relative change rates
of the parameters $\sigma_t$ and $m_t$. Note that if $\Delta$ is large
compared to $\sigma_t$, then only small change rates are possible. The
reason is that in this case the Gaussian measure $\mu_t$ changes too
rapidly in the tails, so that our arguments break down.

Assuming (\ref{eq:A1}), Theorem \ref{thm:main2} and Remark
\ref{rmk:main2}(iii) imply that
\[
\varepsilon_t^{N,p} \leq (2+8t) N^{-1} (1+16N^{-1}),
\]
provided $N \geq 40 (t_0 \vee 1)$, and (\ref{eq:Clambda}) holds with
$\omega=1$, $\max H_s^-$ and $\var_{\mu_s}(H_s)$ bounded by
$\omega=1$, and $C_s^{\mathrm{Poi}}$, $\gamma_s$ replaced by the upper
bounds in (\ref{eq:B2}), (\ref{eq:B3}). If $(\sigma_t \wedge
1)/\Delta$ is not too small, this yields reasonably sized (although
far from optimal) lower bounds on $\lambda_t$ and $N$. On the other
hand, if $\sigma_t/\Delta \to 0$, then the upper bounds in both
(\ref{eq:B1}) and (\ref{eq:B3}) degenerate drastically.

\subsection*{Example 2. Product measures -- dependence on the dimension}
In our second example we study the dependence of (\ref{eq:CN}),
(\ref{eq:Clambda}) and (\ref{eq:main3}) on the dimension in the case
when the evolving measures are all product measures. Suppose that
\[
S = \prod_{i=1}^d S_i,
\qquad \mu_t = \bigotimes_{i=1}^d \mu_t^{(i)},
\]
with probability measures $\mu_t^{(i)}$, $t \geq 0$, $i=1,\ldots,d$,
on finite sets $S_i$ such that $t \mapsto \mu_t^{(i)}(x)$ is
continuously differentiable and strictly positive for all $1\leq i
\leq d$ and $x \in S_i$. In this case one has
\[
H_t(x) = \sum_{i=1}^d H_t^{(i)}(x_i),
\]
where $H_t$ and $H_t^{(i)}$ denote the negative logarithmic time
derivatives of the measures $\mu_t$ and $\mu_t^{(i)}$, respectively.
If we assume
\[
\osc(H_t^{(i)}) \leq 1 \qquad \forall t \in [0,t_0], \; i=1,\ldots,d,
\]
then
\begin{equation}
  \label{eq:bound-om}
  \omega = \sup_{t \in [0,t_0]} \osc(H_t) \leq d,
\end{equation}
and
\[
\var_{\mu_t}(H_t) = \sum_{i=1}^d \var_{\mu_t^{(i)}}(H_t^{(i)}) \leq d.
\]
Now suppose that
\[
\mathcal{L}_t(x,y) = \sum_{i=1}^d \mathcal{L}_t^{(i)}(x_i,y_i)
\]
for generators $\mathcal{L}_t^{(i)}$, $t \geq 0$, $i=1,\ldots,d$, of
time-inhomogeneous Markov processes on $S_i$, i.e. $\mathcal{L}_t$ is
the generator of the product dynamics on $S$ with component generators
$\mathcal{L}_t^{(i)}$. It is well known that $\mathcal{L}_t$ satisfies
Poincar\'e and logarithmic Sobolev inequalities with constants
\[
C_t^{\mathrm{Poi}} = \max_{i=1,\ldots,d} C_t^{\mathrm{Poi},(i)},
\qquad
\gamma_t = \max_{i=1,\ldots,d} \gamma_t^{(i)},
\]
respectively, where $C_t^{\mathrm{Poi},(i)}$ and $\gamma_t^{(i)}$ are
the Poincar\'e and logarithmic Sobolev constants for the generators
$\mathcal{L}_t^{(i)}$. In particular, if the component generators
$\mathcal{L}_t^{(i)}$ satisfy Poincar\'e and logarithmic Sobolev
inequalities with constants independent of $i$, then $\mathcal{L}_t$
satisfies the corresponding inequalities with the same constants --
independently of the dimension $d$. Therefore, in this case, the
values of $N$ and $\lambda_s$ required to satisfy conditions
(\ref{eq:CN}) and (\ref{eq:Clambda}) are of order
$O(d)$. Hence both the number of particles/replicas and the intensity
of MCMC moves required are of order $O(d)$. Since simulating from the
product dynamics also requires $O(d)$ steps, the total effort to keep
track of the evolving product measures up to a given precision is of
order $O(d^3)$.

\begin{rmk}[Independent particles]
  We compare briefly with the particle dynamics without importance
  sampling/resampling, i.e., when the second summand is omitted in the
  definition (\ref{eq:generator}) of the generator
  $\mathcal{L}_t^N$. In this case, the particles/replicas move
  independently according to the time-inhomogeneous Markovian dynamics
  with generators $\mathcal{L}_t$, $t \geq 0$.  Hence the positions of
  the particles at time $t$ are independent random variables with
  distribution $\tilde{\mu}_t=\mu_0 p_{0,t}$, where $p_{s,t}$, $0 \leq
  s \leq t$, is the time-inhomogeneous transition function. A
  corresponding discrete-time dynamics is used for example in the
  classical simulated annealing algorithm (see
  e.g. \cite{DMM-ann,liu}). Since in general $\tilde{\mu}_t \neq
  \mu_t$, the empirical distribution of the independent particle
  system is an asymptotically biased estimator for $\mu_t$. However,
  under strong mixing conditions as imposed above, the difference
  between $\tilde{\mu}_t$ and $\mu_t$, and hence the asymptotic bias,
  will be small. Therefore it is possible that, for fixed $N$, the
  empirical distribution of the independent particles process is a
  better estimate for $\mu_t$ than $\eta_t^N$. On the other hand, if
  the mixing properties break down, the bias of the independent
  particles estimator will not be small, whereas the empirical
  measures $\nu_t^N$ and $\eta_t^N$ may still be suitable
  estimators. This will be demonstrated now in a particular case.
\end{rmk}

\subsection{Non-asymptotic bounds from local
estimates}\label{sect15}
With suitable modifications the above analysis can also be applied to
derive bounds when good mixing properties hold only locally. As an
illustration, we consider another extreme case in which the state
space is decomposed into several components that are not connected by
the underlying Markovian dynamics.
Suppose that
\[
S = \bigcup_{i\in I} S_i,
\]
is a decomposition of $S$ into disjoint non-empty subsets $S_i$,
$i\in I$, such that
\[
\L_t(x,y)\ =\ 0\qquad\mbox{for any }t \geq 0,\
x\in S_i\mbox{ and } y \in S_j\mbox{ with } i \neq j.
\]
Let $\mu_t^i:=\mu_t(\cdot|S_i)$ denote the measure $\mu_t$ conditioned
by $S_i$. Then we can apply the arguments above with the $L^p$ norm
replaced by the stronger norm
\[
\| f \|_{L^p(\mu_t)}^\sim\ :=\ \max_{i
  \in I} \|f\|_{L^p(S_i,\mu_t^i)}.
\]
Since H\"older's inequality and related estimates hold for these
modified $L^p$ norms as well, the assertion of Theorem
\ref{thm:main} still remains true if $\varepsilon_t^{N,p}$ is
replaced by
\[
\tilde{\varepsilon}_t^{N,p} :=
\sup\, \left\{\left.
\E \left[\big| \ip{f}{\nu_s^N}-\ip{f}{\mu_s} \big|^2\right]\,\right|
\; f:S\to\erre \text{ s.t. }
\|f\|_{L^p(\mu_s)}^\sim \leq 1, \; s\in[0,t] \right\},
\]
and the constants $C_{s,t}(p,q)$ and $\bar{C}_t(p,q,\delta )$ are
defined w.r.t. the modified $L^p$ and $L^q$ norms as well. Moreover,
the representations (\ref{eq:timedep}) and (\ref{eq:centered}) hold for
$\mu_t^i$ in place of $\mu_t$ if $H_t$ is replaced by
\[
H_t^i\ :=\ H_t-\langle H_t,\mu_t^i\rangle\, .
\]
Let $A_t^i$, $B_t^i$ and $\gamma_t^i$ denote the Poincar\'{e} and
logarithmic Sobolev constants defined as above but with $S$, $\mu_t$
and $H_t$ replaced by $S_i$, $\mu_t^i$ and $H_t^i$, respectively. Let
us also set
\[
\tilde A_t:=\max_{i\in I}\, A_t^i,\qquad \tilde
B_t:=\max_{i\in I}\, B_t^i,\qquad \tilde\gamma_t:=\max_{i\in
  I}\,\gamma_t^i,
\]
\begin{align*}
  \tilde{K}_t(q) &\ :=\ \int_0^t \| H_s\|_{L^q(\mu_s)}^\sim \,ds,\qquad\mbox{ and}\\
  \tilde{M}_t &\ :=\ \max_{i \in I} \sup_{0 \leq r \leq s \leq t}
                    \,\frac{\mu_s(S_i)}{\mu_r(S_i)}\ .
\end{align*}
Then, by estimating $L^p$ norms separately on each component, we
can prove the following extension of Theorem \ref{thm:main2}:
\begin{thm}     \label{thm:main3}
  Fix $t_0 \geq 0$, $q\in ]6,\infty [$ and $p \in
  ]\frac{4q}{q-2},q[$. Suppose that
  \[
  N \geq 40 \max (\tilde K_{t_0}(q),1), \qquad\text{and}
  \]
  \begin{equation}
    \label{eq:3ast}
    \lambda_s \geq \max \left(
    \frac{p\tilde A_s}4 + \frac{p(p+3)}{4}t_0\tilde B_s,
    \frac{17}{4} a(p,q) \omega \tilde{\gamma}_s\right)
    \qquad \text{for all } s\in [0,t_0].
  \end{equation}
  Then, for $t\in [0,t_0]$, one has
  \[
  \tilde{\varepsilon}_t^{N,p} \ \leq\ \big(2+8K_t(2)
  \tilde{M}^2_t\big)\, N^{-1}\, \big(1 + 16\tilde{K}_t(q)\tilde{M}_t^2\,
  N^{-1}\big).
  \]
\end{thm}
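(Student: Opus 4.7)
The proof follows the strategy of Theorem \ref{thm:main2}, but carried out component by component. First, as noted in the paragraph preceding the theorem, H\"older's inequality and the other elementary estimates used to derive Theorem \ref{thm:main} remain valid for the modified norms $\|\cdot\|_{L^p(\mu_t)}^\sim = \max_{i \in I}\|\cdot\|_{L^p(S_i,\mu_t^i)}$. Running the argument of Theorem \ref{thm:main} in this modified framework gives
\[
\tilde\varepsilon_t^{N,p}\ \leq\ \bigl(2 + \tilde v_t(p)\bigr)\, N^{-1}\bigl(1 + 10\,\tilde{\bar C}_t(p,q,\delta)\, N^{-1}\bigr),
\]
where $\tilde v_t(p)$ and $\tilde{\bar C}_t(p,q,\delta)$ are the analogues of $v_t(p)$ and $\bar C_t(p,q,\delta)$ computed with the modified operator norms $C_{s,t}(p,q)$. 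It then suffices to estimate these two quantities in terms of the local constants $\tilde A_s$, $\tilde B_s$, $\tilde \gamma_s$ and the ratio $\tilde M_t$.

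The key structural observation is that, since $\mathcal{L}_t$ vanishes between different components, the Feynman-Kac propagator $q_{s,t}$ preserves each $S_i$: the process appearing in (\ref{eq:FK}) cannot leave its initial component. Differentiating $\mu_r(S_i)=\sum_{x\in S_i}\mu_r(x)$ and using (\ref{eq:timedep}) gives $\partial_r \log \mu_r(S_i) = -\ip{H_r}{\mu_r^i}$, so that $\exp\bigl(-\int_s^t \ip{H_r}{\mu_r^i}\,dr\bigr) = \mu_t(S_i)/\mu_s(S_i)$. Splitting $H_r = H_r^i + \ip{H_r}{\mu_r^i}$ inside (\ref{eq:FK}) then yields, for $x \in S_i$ and $f$ supported on $S_i$,
\[
q_{s,t} f(x)\ =\ \frac{\mu_t(S_i)}{\mu_s(S_i)}\, q_{s,t}^{(i)} f(x),
\]
where $q_{s,t}^{(i)}$ is the Feynman-Kac propagator on $S_i$ associated with $\mathcal{L}_r$ and $H_r^i$. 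Since $(\mu_t^i)_{t \geq 0}$ is itself an evolving probability measure on $S_i$ with negative logarithmic time derivative $H_t^i$ and associated Poincar\'e/log Sobolev constants $A_t^i$, $B_t^i$, $\gamma_t^i$, hypothesis (\ref{eq:3ast}) is precisely what is needed to apply the $L^p$--$L^q$ estimates of \cite{JMAA} to each $q_{s,t}^{(i)}$ and conclude that the local operator norms $C_{s,t}^{(i)}(p,q)$ and $C_{s,t}^{(i)}(p)$ are all bounded by $2$.

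With the change-of-normalization identity in hand, the modified operator norms are controlled directly: for $f$ supported on $S_i$,
\[
\|q_{s,t}f\|_{L^p(\mu_s^i)}\ =\ \frac{\mu_t(S_i)}{\mu_s(S_i)}\,\|q_{s,t}^{(i)} f\|_{L^p(\mu_s^i)}\ \leq\ \tilde M_t\, C_{s,t}^{(i)}(p,q)\,\|f\|_{L^p(\mu_t^i)},
\]
with analogous inequalities for the other ratios in the definition of $C_{s,t}(p,q)$. Taking the maximum over $i$ gives $C_{s,t}(p,q)\leq \tilde M_t \max_i C_{s,t}^{(i)}(p,q) \leq 2\tilde M_t$, and similarly $C_{s,t}(p)\leq 2\tilde M_t$. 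Squaring and inserting into the modified versions of (\ref{eq:stella}) and (\ref{eq:stst}) produces bounds of the form $\tilde v_t(p) \leq c_1\, K_t(2)\tilde M_t^2$ and $\tilde{\bar C}_t(p,q,\delta) \leq c_2\,\tilde K_t(q)\tilde M_t^2$ with explicit numerical constants. Substituting into the bound of the first paragraph and tracking the constants exactly as in the proof of Theorem \ref{thm:main2} produces the claimed inequality.

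The main obstacle is the change-of-normalization identity linking $q_{s,t}$ to $q_{s,t}^{(i)}$ on each component: this is what both translates global Feynman-Kac estimates into local ones and generates the factor $\tilde M_t$ in front of the integrated $L^q$ norms of $H_s$. Once this identity is in place and the evolving family $(\mu_t^i, H_t^i)$ on $S_i$ has been matched with the hypotheses of \cite{JMAA}, the bounds can be invoked component by component and the rest reduces to bookkeeping of numerical constants.
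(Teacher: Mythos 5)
Your proof follows essentially the same route as the paper: you derive the change-of-normalization identity $q_{s,t}f = \frac{\mu_t(S_i)}{\mu_s(S_i)}\,q_{s,t}^i f$ on each component, observe that $(\mu_t^i,H_t^i)$ is an evolving family matching the hypotheses of \cite{JMAA}, bound the modified operator norms by $\tilde M_t$ times the local ones, and then rerun the Theorem~\ref{thm:main}/\ref{thm:main2} constant bookkeeping. Your passing claim that the local norms are "bounded by $2$" is slightly weaker than what is actually needed for the stated constants $8$ and $16$ (the paper uses $2^{1/4}$ and $2^{1/4}e^{1/17}$), but you correctly defer to "tracking the constants exactly as in the proof of Theorem \ref{thm:main2}," so this is only a cosmetic imprecision.
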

\begin{rmk} (i) If there is only one component, the assertion of
  Theorem \ref{thm:main3} reduces to that of Theorem \ref{thm:main2}.

  \noindent (ii) Error bounds for the estimators $\ip{f}{\nu_t^N}$ and
  $\ip{f}{\eta_t^N}$ for a fixed function $f$ hold analogously to
  Corollary \ref{cor:estf}.
\end{rmk}

\subsection{Open problems}
1) The cases discussed in Sections \ref{sect14} and \ref{sect15} are
extreme cases. In many typical applications, one would expect the
state space to split up as time evolves into more and more
components that get almost disconnected by the dynamics (local
modes, metastable states). The study of such more complicated
situations is an important topic for future research.

\smallskip

\noindent 2) We have discussed here a setup with discrete state space
and continuous time. In continuous time, particle systems on more
general state spaces can in principle be treated by similar
techniques, although of course additional technical considerations are
required (cf. for instance \cite{Rou}). For algorithmic applications,
the case of discrete time and a continuous state space is probably the
most interesting one. For an overview of the substantial literature
and some more recent results in this case we refer to
\cite{BCJ,cappe,chop-ann,DM,MM,DMDJ,DouMou,JasDou} and references
therein. An $L^p$ approach similar to the one presented here is
developed for the discrete time case in the PhD thesis of N.~Schweizer
\cite{Schw-phd}.

\section{Variances of weighted empirical averages}
\label{sec:mp}
In this section we will prove Proposition \ref{thm:bondo}, which shows
that $\langle f,\nu_t^N\rangle $ is an unbiased estimator for $\langle
f,\mu_t\rangle $ and gives an explicit formula for the variance.  The
proof follows the arguments developed in \cite{MM} relying on the
identification of appropriate martingales.

Recall that the carr\'{e} du champ (square field) operator
$\Gamma_t^N$ associated to $\mathcal{L}_t^N$ is defined for functions
$\varphi :S^N\to\mathbb{R}$ by
\[
\Gamma_t^N(\varphi )\ =\
\mathcal{L}_t^N\varphi^2-2\varphi\mathcal{L}_t^N\varphi,
\]
i.e.,
\begin{equation}\label{eq:Gamma}
\Gamma_t^N(\varphi )(x)\ =\ \sum_{y\in S}\mathcal{L}_t^N(x,y)\,
(\varphi (y)-\varphi (x))^2\qquad \forall x\in S^N.
\end{equation}
It is well-known that the processes
\begin{eqnarray}
M_t^\varphi &=& \varphi(t,X_t^N) - \varphi(0,X_0^N)
      - \int_0^t (\frac{\partial }{\partial s} +
      \L_s^N)\varphi(s,X_s^N)\,ds,\qquad\mbox{and}\label{eq:Mt}\\
 N_t^\varphi &=& (M_t^\varphi )^2 - \int_0^t \Gamma_s^N(\varphi(s,\,\cdot\,
 ))(X_s^N)\,ds\label{eq:Nt}
\end{eqnarray}
are martingales w.r.t. the filtration induced by the process $X_t^N$
for any function $\varphi :\mathbb{R}^+\times S^N\to\mathbb{R}$ that
is twice continuously differentiable in the first variable, cf. e.g.
\cite[Appendix 1, Lemma 5.1]{KipLan}.  For $x\in S^N$ let
\[
\eta(x) = \frac1N \sum_{i=1}^N \delta_{X_i}
\]
denote the corresponding empirical average. In the next lemma
we derive expressions for $\L_t^N$ and $\Gamma_t^N$ acting on linear
functions on $S^N$ of the form
\[
\varphi_f(x)\ =\ \ip{f}{\eta(x)}\ =\ N^{-1}\sum_{i=1}^Nf(x_i).
\]
\begin{lemma}    \label{lem:mozza}
For any function $f:S\to\mathbb{R}$ and $t\ge 0$, one has
\begin{equation*}
\L_t^N \ip{f}{\eta}\ =\ \lambda_t \ip{\L_t f}{\eta} + \ip{H_t}{\eta}
\ip{f}{\eta} - \ip{H_t f}{\eta}
\end{equation*}
and
\begin{equation*}
\Gamma_t^N (\ip{f}{\eta})\ =\ \frac{\lambda_t}{N} \ip{\Gamma_t
(f)}{\eta} + \frac 1N \iint \big(H_t(y)-H_t(z)\big)^+
\big(f(z)-f(y)\big)^2 \,\eta(dy)\,\eta(dz),
\end{equation*}
where $\Gamma_t$ denotes the carr\'{e} du champ operator w.r.t.
$\mathcal{L}_t$.
\end{lemma}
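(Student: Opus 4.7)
My plan is to prove both formulas by direct computation, exploiting the special additive structure $\varphi_f(x) = N^{-1}\sum_{i=1}^N f(x_i)$ that makes the increments under both parts of the generator (\ref{eq:generator}) explicit.

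For the first identity, I would split $\mathcal{L}_t^N \varphi_f$ into the two summands coming from (\ref{eq:generator}). Since $\varphi_f$ is a sum of single-coordinate functions, $\mathcal{L}_t^{(i)}\varphi_f(x) = N^{-1}(\mathcal{L}_t f)(x_i)$, and summing over $i$ immediately gives the term $\lambda_t \langle \mathcal{L}_t f,\eta\rangle$. For the replacement part, I use that $\varphi_f(x^{i\to j})-\varphi_f(x)=N^{-1}(f(x_j)-f(x_i))$, so the contribution equals
\[
\frac{1}{N^2}\sum_{i,j=1}^N (H_t(x_i)-H_t(x_j))^+\,(f(x_j)-f(x_i)).
\]
The key algebraic step is to write $(a)^+=\tfrac12(a+|a|)$: the $|H_t(x_i)-H_t(x_j)|$ piece is symmetric in $(i,j)$, while $f(x_j)-f(x_i)$ is antisymmetric, so it cancels in the double sum. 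What remains is
\[
\tfrac{1}{2N^2}\sum_{i,j}(H_t(x_i)-H_t(x_j))(f(x_j)-f(x_i)),
\]
which expands into four terms that group, using $\sum_{i,j}g(x_i)h(x_j)=N^2\langle g,\eta\rangle\langle h,\eta\rangle$, into $\langle H_t,\eta\rangle\langle f,\eta\rangle - \langle H_t f,\eta\rangle$. This yields the claimed formula for $\mathcal{L}_t^N\langle f,\eta\rangle$.

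For the second identity, I would work directly from the representation (\ref{eq:Gamma}) of $\Gamma_t^N$ as a sum of squared jump increments weighted by the jump rates. The independent MCMC moves of particle $i$ contribute increments $N^{-1}(f(y)-f(x_i))$ at rate $\lambda_t\mathcal{L}_t(x_i,y)$; squaring and summing yields
\[
\frac{\lambda_t}{N^2}\sum_{i=1}^N \Gamma_t(f)(x_i) = \frac{\lambda_t}{N}\langle \Gamma_t(f),\eta\rangle.
\]
The replacement moves $x\mapsto x^{i\to j}$ produce increments $N^{-1}(f(x_j)-f(x_i))$ at rate $N^{-1}(H_t(x_i)-H_t(x_j))^+$, contributing
\[
\frac{1}{N^3}\sum_{i,j}(H_t(x_i)-H_t(x_j))^+(f(x_j)-f(x_i))^2,
\]
which is exactly $N^{-1}\iint(H_t(y)-H_t(z))^+(f(z)-f(y))^2\,\eta(dy)\eta(dz)$ by definition of the empirical measure. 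Adding the two contributions gives the stated formula.

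The whole argument is essentially bookkeeping; the only slightly delicate step is handling the positive part in the swap term of $\mathcal{L}_t^N\langle f,\eta\rangle$, which I would expect to be the main (minor) obstacle. The $(a)^+=\tfrac12(a+|a|)$ decomposition reduces it cleanly, and for $\Gamma_t^N$ no such manipulation is needed because the squared increment is already symmetric in $(i,j)$, so the positive part is preserved in the final expression.
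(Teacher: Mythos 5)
Your proof is correct and takes essentially the same route as the paper: both compute $\L_t^N\ip{f}{\eta}$ and $\Gamma_t^N(\ip{f}{\eta})$ directly from the definition of the generator and the carr\'e du champ, and both rely on the antisymmetry in $(i,j)$ to remove the positive part from the replacement term. The only cosmetic difference is that you invoke $(a)^+=\tfrac12(a+|a|)$ and note that the $|\cdot|$-piece is symmetric times antisymmetric (hence vanishes in the double sum), whereas the paper reaches the same conclusion by reindexing the restricted sums over $\{i,j: H_t(x_i)>H_t(x_j)\}$ and $\{i,j: H_t(x_j)>H_t(x_i)\}$; your version is a touch more economical but mathematically identical.
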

\begin{proof}
The definition of $\L_t^N$ immediately yields
\begin{equation}
\label{eq:provo} \L_t^N \ip{f}{\eta }(x)\  =\ \frac{\lambda_t}{N}
\sum_{i=1}^N \L_t f(x_i) + \frac1{N^2} \sum_{i,j=1}^N
(H_t(x_i)-H_t(x_j))^+ (f(x_j)-f(x_i)).
\end{equation}
Moreover,
\begin{align*}
&\sum_{i,j=1}^N (H_t(x_i)-H_t(x_j))^+ (f(x_j)-f(x_i))\\
&\hspace*{7em} =
  \sum_{i,j: H_t(x_i)>H_t(x_j)} (H_t(x_i)-H_t(x_j)) (f(x_j)-f(x_i))\\
&\hspace*{7em} =
  \sum_{i,j: H_t(x_j)>H_t(x_i)} (H_t(x_j)-H_t(x_i)) (f(x_i)-f(x_j))\\
&\hspace*{7em} =
  \sum_{i,j: H_t(x_j)>H_t(x_i)} (H_t(x_i)-H_t(x_j)) (f(x_j)-f(x_i))\\
&\hspace*{7em} =
  -\sum_{i,j=1}^N (H_t(x_i)-H_t(x_j))^- (f(x_j)-f(x_i)),
\end{align*}
and hence
\[
\sum_{i,j=1}^N (H_t(x_i)-H_t(x_j)) (f(x_j)-f(x_i)) = 2
\sum_{i,j=1}^N (H_t(x_i)-H_t(x_j))^+ (f(x_j)-f(x_i)).
\]
Therefore the second term on the right hand side of (\ref{eq:provo})
is equal to
\begin{align*}
\frac{1}{2N^2} &\sum_{i,j=1}^N (H_t(x_i)-H_t(x_j)) (f(x_j)-f(x_i))\\
&\qquad = \Big(\frac1N \sum_{i=1}^N H_t(x_i) \Big) \Big(\frac1N
\sum_{j=1}^N f(x_j) \Big)
 - \frac1N \sum_{i=1}^N H_t(x_i) f(x_i)\\
 &\qquad = \ip{H_t}{\eta(x)} \ip{f}{\eta(x)} - \ip{H_tf}{\eta(x)},
\end{align*}
from which the first claim follows.

Furthermore, since
\[
\ip{f}{\eta (x^{i\to j})}-\ip{f}{\eta (x)}\ =\ N^{-1}\,\left(
f(x_j)-f(x_i)\right),
\]
(\ref{eq:Gamma}) and (\ref{eq:generator}) imply
\begin{align*}
\Gamma_t^N \ip{f}{\eta }(x)\ =&\;
\frac{\lambda_t}{N^2} \sum_{i=1}^N \sum_{y\in S} \L_t(x_i,y) (f(y)-f(x_i))^2\\
&+ \frac 1{N^3}\sum_{i,j=1}^N \big(H_t(x_i)-H_t(x_j)\big)^+
\big(f(x_j)-f(x_i)\big)^2,
\end{align*}
from which the second claim follows noting that the first term on the
right hand side of the previous expression is equal to
\[
\frac{\lambda_t}{N^2} \sum_{i=1}^N \Gamma_t (f)(x_i)\ =\
\frac{\lambda_t}{N} \ip{\Gamma_t (f)}{\eta(x)}.\qedhere
\]
\end{proof}

Now let us define
\[
\bar{A}_{s,t}^f\ = \ \ip{q_{s,t}f}{\eta_s^N}\ =\ \frac1N
\sum_{i=1}^N (q_{st}f)(X^N_{s,i})\, .
\]
As a consequence of Lemma \ref{lem:mozza} we obtain:
\begin{prop}     \label{prop:rella}
  The processes $\bar M_u^f$ and $\bar N_u^f$, $u\in[0,t]$, defined by
  \begin{eqnarray*}
    \bar M_u^f &=& \bar{A}_{u,t}^f - \bar{A}_{0,t}^f
              - \int_0^u \ip{H_s}{\eta_s^N}\ip{q_{s,t}f}{\eta_s^N}\,ds,\\
    \bar N_u^f &=& (\bar M_u^f)^2
              - \frac{1}{N} \int_0^u \lambda_s\,\ip{\Gamma_s (q_{s,t}f)}{\eta_s^N}\,ds\\
          &&  - \frac1N \int_0^u\!\!\iint \big(H_s(y)-H_s(z)\big)^+
                   \big(q_{s,t}f(z)-q_{s,t}f(y)\big)^2
                       \,\eta_s^N(dy) \,\eta_s^N(dz) \,ds
  \end{eqnarray*}
  are martingales w.r.t.\ the filtration $\mathcal{F}_t=\sigma (X_s^N\, |\, s\in [0,t])$.
\end{prop}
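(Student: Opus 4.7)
The plan is to apply the general martingale fact (\ref{eq:Mt})--(\ref{eq:Nt}) to the concrete test function
\[
\varphi(s,x) := \langle q_{s,t}f, \eta(x)\rangle = \tfrac1N \sum_{i=1}^N (q_{s,t}f)(x_i),
\]
with $t$ fixed and $s$ ranging in $[0,t]$. Since $q_{s,t}f$ is continuously differentiable in $s$ by the backward equation (\ref{eq:BWE}), $\varphi$ has the required regularity, and at the process we have $\varphi(s,X_s^N) = \bar A_{s,t}^f$.

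First I would verify that $\bar M_u^f$ is a martingale. For this I compute $(\partial_s+\mathcal{L}_s^N)\varphi$ at points $x \in S^N$. The backward equation (\ref{eq:BWE}) gives
\[
\tfrac{\partial}{\partial s}\varphi(s,x) = \bigl\langle -\lambda_s\mathcal{L}_s q_{s,t}f + H_s\,q_{s,t}f,\,\eta(x)\bigr\rangle,
\]
while the first identity in Lemma \ref{lem:mozza} applied to $q_{s,t}f$ in place of $f$ yields
\[
\mathcal{L}_s^N\varphi(s,x) = \lambda_s\langle\mathcal{L}_sq_{s,t}f,\eta(x)\rangle + \langle H_s,\eta(x)\rangle\langle q_{s,t}f,\eta(x)\rangle - \langle H_s\,q_{s,t}f,\eta(x)\rangle.
\]
Adding these, the two $\lambda_s\mathcal{L}_sq_{s,t}f$ contributions cancel and so do the two $H_s\,q_{s,t}f$ contributions, leaving
\[
(\partial_s+\mathcal{L}_s^N)\varphi(s,x) = \langle H_s,\eta(x)\rangle\langle q_{s,t}f,\eta(x)\rangle.
\]
Evaluated along $X_s^N$ this gives $\langle H_s,\eta_s^N\rangle\langle q_{s,t}f,\eta_s^N\rangle$, so (\ref{eq:Mt}) identifies $M_u^\varphi$ with the process $\bar M_u^f$ defined in the statement, and the martingale property follows.

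Next I would handle $\bar N_u^f$ by directly identifying $\Gamma_s^N(\varphi(s,\cdot))$ via the second identity in Lemma \ref{lem:mozza} (again with $q_{s,t}f$ in place of $f$):
\[
\Gamma_s^N(\varphi(s,\cdot))(x) = \frac{\lambda_s}{N}\langle\Gamma_s(q_{s,t}f),\eta(x)\rangle + \frac1N\iint (H_s(y)-H_s(z))^+(q_{s,t}f(z)-q_{s,t}f(y))^2\,\eta(x,dy)\,\eta(x,dz).
\]
Evaluating at $x=X_s^N$, this integrand is exactly the one appearing in the integral part of $\bar N_u^f$. Therefore (\ref{eq:Nt}) says that $(M_u^\varphi)^2$ minus this integral is a martingale, i.e.\ $\bar N_u^f$ is a martingale w.r.t.\ $\mathcal{F}_u$.

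There is no genuine obstacle here: the proposition is essentially a packaging of the general Dynkin/quadratic-variation formulas (\ref{eq:Mt})--(\ref{eq:Nt}) with the explicit action of $\mathcal{L}_s^N$ and $\Gamma_s^N$ on linear functionals of the empirical measure computed in Lemma \ref{lem:mozza}. The only point requiring a little care is to observe that the terms $\langle H_s\,q_{s,t}f,\eta\rangle$ and $\lambda_s\langle\mathcal{L}_sq_{s,t}f,\eta\rangle$ from the backward equation for $q_{s,t}f$ are precisely what is needed to cancel the corresponding nonlinear/mixing pieces produced by $\mathcal{L}_s^N$ acting on the empirical linear functional, leaving only the quadratic interaction $\langle H_s,\eta_s^N\rangle\langle q_{s,t}f,\eta_s^N\rangle$ which is the drift of $\bar A_{s,t}^f$.
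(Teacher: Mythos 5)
Your proof is correct and follows exactly the same route as the paper's: apply the general martingale characterizations (\ref{eq:Mt})--(\ref{eq:Nt}) to $\varphi(s,x)=\ip{q_{s,t}f}{\eta(x)}$, use the backward equation (\ref{eq:BWE}) for $\partial_s\varphi$, and invoke both identities of Lemma \ref{lem:mozza} (applied to $q_{s,t}f$) to identify $\mathcal{L}_s^N\varphi$ and $\Gamma_s^N(\varphi(s,\cdot))$. The cancellation of the $\lambda_s\mathcal{L}_sq_{s,t}f$ and $H_sq_{s,t}f$ terms that you highlight is precisely the step in the paper's argument, so there is nothing to add.
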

\begin{proof}
Note that $\bar{A}_s^f=\varphi(s,X_s^N)$, where
\[
\varphi(s,x)\ =\ N^{-1}\sum_{i=1}^N q_{st}f(x_i).
\]
By the backward equation (\ref{eq:BWE}),
\begin{eqnarray*}
 \frac{\partial}{ \partial s}\varphi(s,x)
     &=& -\frac{\lambda_s}{N} \sum_{i=1}^N \L_s q_{st}f(x_i)
         +\frac1N \sum_{i=1}^N H_s q_{st}f(x_i)\\
     &=& -\lambda_s \ip{\L_s q_{st}f}{\eta(x)} + \ip{H_s q_{st}f}{\eta(x)},
\end{eqnarray*}
and by lemma \ref{lem:mozza},
\begin{eqnarray*}
  \left(\L_s^N\varphi\right)(s,x) &=& \lambda_s \ip{\L_sq_{s,t}f}{\eta(x)}
                         + \ip{H_s}{\eta(x)}\ip{q_{s,t}f}{\eta(x)}
                         - \ip{H_sq_{s,t}f}{\eta(x)}
\end{eqnarray*}
Hence
\[
\left(\frac{\partial}{\partial s} + \L_s^N\right)\varphi(s,x)\ =\
\ip{H_s}{\eta(x)}\ip{q_{s,t}f}{\eta(x)},
\]
which proves that $\bar M^f=M^\varphi $ is a martingale, cf.\
(\ref{eq:Mt}). Similarly, by Lemma \ref{lem:mozza},
\begin{eqnarray*}
  \Gamma_s^N(\varphi )(s,x) &=& \frac{\lambda_s}{N} \ip{\Gamma_s (q_{s,t}f)}{\eta(x)}\\
      && + \frac1N \iint \big(H_s(y)-H_s(z)\big)^+
                   \big(q_{s,t}f(z)-q_{s,t}f(y)\big)^2
                       \,\eta_s^N(dy) \,\eta_s^N(dz),
\end{eqnarray*}
which proves that $\bar N^f=N^\varphi $ is a martingale, cf. (\ref{eq:Nt}).
\end{proof}

Since in general, $\bar A^f_{s,t}$ is not a martingale,
$\ip{f}{\eta_t^N}$ is not an unbiased estimator for $\ip{f}{\mu_t}$.
This motivates considering $\ip{f}{\nu_t^N}$ instead. Let
\begin{equation}    \label{eq:lone}
A_{s,t}^f \ =\ \ip{q_{s,t}f}{\nu_s^N}\ =\ e^{-\int_0^s
\ip{H_r}{\eta_r^N}\,dr} \bar{A}_{s,t}^f.
\end{equation}
\begin{prop}      \label{lem:asiago}
  The process $A_{u,t}^f$, $u\in [0,t]$, is a martingale
  with increasing process given by
  \begin{eqnarray*}
    \langle A^f_{\bullet ,t} \rangle_u &=& \frac{1}{N} \int_0^u
         \lambda_s\,\ip{1}{\nu_s^N} \ip{\Gamma_s (q_{s,t}f)}{\nu_s^N}\,ds\\
      && +\frac1N \int_0^u
         \iint \big(H_s(x)-H_s(y)\big)^+
            \big(q_{s,t}f(y)-q_{s,t}f(x)\big)^2
                \,\nu_s^N(dx) \,\nu_s^N(dy)\; ds.
  \end{eqnarray*}
\end{prop}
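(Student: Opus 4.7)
The plan is to write $A_{u,t}^f = Z_u\,\bar{A}_{u,t}^f$ with $Z_u := \exp\bigl(-\int_0^u \ip{H_r}{\eta_r^N}\,dr\bigr)$ and apply integration by parts. Since $Z$ is continuous and of finite variation, its quadratic covariation with $\bar{A}^f$ vanishes and
\[
dA_{u,t}^f\ =\ Z_u\,d\bar{A}_{u,t}^f + \bar{A}_{u,t}^f\,dZ_u.
\]
By Proposition \ref{prop:rella}, $d\bar{A}_{u,t}^f = d\bar{M}_u^f + \ip{H_u}{\eta_u^N}\ip{q_{u,t}f}{\eta_u^N}\,du$, while $dZ_u = -Z_u\ip{H_u}{\eta_u^N}\,du$. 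Since $\bar{A}_{u,t}^f = \ip{q_{u,t}f}{\eta_u^N}$, the two drift terms cancel exactly and we obtain
\[
dA_{u,t}^f\ =\ Z_u\,d\bar{M}_u^f.
\]
Thus $A^f_{\bullet,t}$ is a local martingale; finiteness of $S$ together with the boundedness of $Z$ on the compact interval $[0,t]$ makes it bounded, hence a true martingale.

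For the increasing process, the same integration by parts identity yields
\[
\langle A^f_{\bullet,t}\rangle_u\ =\ \int_0^u Z_s^2\,d\langle \bar{M}^f_\bullet\rangle_s,
\]
and $d\langle \bar{M}^f_\bullet\rangle_s$ is the absolutely continuous compensator of $(\bar{M}^f)^2$ already identified in Proposition \ref{prop:rella} as the sum of the two integrals subtracted in the definition of $\bar{N}_u^f$. To convert $\eta_s^N$-integrals into $\nu_s^N$-integrals I use the defining relation $\nu_s^N = Z_s\,\eta_s^N$, which gives $\ip{g}{\nu_s^N} = Z_s\ip{g}{\eta_s^N}$ for every $g:S\to\erre$ and, in particular, $\ip{1}{\nu_s^N} = Z_s$. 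Hence the first contribution transforms as
\[
Z_s^2\,\lambda_s\,\ip{\Gamma_s(q_{s,t}f)}{\eta_s^N}\ =\ \lambda_s\,\ip{1}{\nu_s^N}\,\ip{\Gamma_s(q_{s,t}f)}{\nu_s^N},
\]
while the double integral transforms via $Z_s^2\,\eta_s^N(dy)\,\eta_s^N(dz) = \nu_s^N(dy)\,\nu_s^N(dz)$, producing exactly the claimed expressions (up to relabelling of the integration variables, which is immaterial by the symmetry of the integrand under swapping $y\leftrightarrow z$ together with flipping the sign inside $(\,\cdot\,)^+$).

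The main obstacle is really only bookkeeping: ensuring the drift cancellation is clean and that the factor $Z_s^2$ is absorbed correctly into the two components of the bracket. The key conceptual observation making everything go through is the identity $\nu_s^N = \ip{1}{\nu_s^N}\,\eta_s^N$ linking the reweighted and unweighted empirical measures, which both eliminates the exponential factor in the drift (yielding the martingale property) and supplies the $\ip{1}{\nu_s^N}$ prefactor appearing in the first term of the bracket formula.
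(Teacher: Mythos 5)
Your proof is correct and follows essentially the same route as the paper: integration by parts for $A^f_{u,t}=Z_u\,\bar A^f_{u,t}$, cancellation of the two drift terms via the identity $Z_s\ip{q_{s,t}f}{\eta_s^N}=A^f_{s,t}$, the resulting representation $dA^f_{u,t}=Z_u\,d\bar M^f_u$, and the bracket formula $\langle A^f_{\bullet,t}\rangle_u=\int_0^u Z_s^2\,d\langle\bar M^f\rangle_s$ combined with $\nu_s^N=Z_s\,\eta_s^N$ and Proposition~\ref{prop:rella}. The paper's proof is merely a terser version of the same computation, and your spelling out of the details (notably the observation that $\ip{1}{\nu_s^N}=Z_s$, which supplies the prefactor in the first bracket term) is accurate; the aside about the symmetry of the integrand is unnecessary since the match with the statement is achieved by a plain relabelling of dummy variables.
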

\begin{proof}
  By the integration by parts formula for Stieltjes integrals and
  Proposition \ref{prop:rella}, we get
  \begin{eqnarray*}
    A_{u,t}^f-A_{0,t}^f &=& \int_0^u e^{-\int_0^s
      \ip{H_r}{\eta_r^N}\,dr} d\bar{A}_{s,t}^f\,
    - \,\int_0^u\ip{H_s}{\eta_s^N} e^{-\int_0^s \ip{H_r}{\eta_r^N}\,dr} \bar{A}_{s,t}^f\,ds\\
    &=& \int_0^u e^{-\int_0^s \ip{H_r}{\eta_r^N}\,dr} d\bar M_s^f +
    \ip{H_s}{\eta_s^N} A_s^f\,ds - \ip{H_s}{\eta_s^N} A_s^f\,ds.
  \end{eqnarray*}
  Hence $[0,t] \ni s \mapsto A_{s,t}^f$ is a martingale whose
  increasing process can be written as
  \[
  \langle A^f_{\bullet ,t} \rangle_u\ =\ \int_0^u e^{-2\int_0^s
    \ip{H_r}{\eta_r^N}\,dr}\,d\langle \bar M^f \rangle_s.
  \]
  The result now follows by Proposition \ref{prop:rella} and Equation
  (\ref{eq:REWEMP}).
\end{proof}

The purpose of the next lemma is to obtain an alternative
representation (modulo martingale terms) of the term involving the
carr\'{e} du champ operator in the expression for $\langle
A_{\bullet ,t}^f\rangle$.
\begin{lemma}    \label{lem:bufala}
  The following decomposition holds:
  \begin{eqnarray*}
    \lefteqn{\int_0^u
      \lambda_s \ip{1}{\nu_s^N}\ip{\Gamma_s (q_{st}f)}{\nu_s^N}\,ds}\\
    &=& \tilde{M}_u
    + \ip{1}{\nu_u^N} \bip{(q_{ut}f)^2}{\nu_u^N}
    + \int_0^u
    \ip{H_s}{\nu_s^N} \ip{(q_{st}f)^2}{\nu_s^N}\,ds\\
    && - \int_0^u \ip{1}{\nu_s^N}
    \ip{H_s(q_{st}f)^2}{\nu_s^N}\,ds,
  \end{eqnarray*}
  where $\tilde{M}$ is a martingale.
\end{lemma}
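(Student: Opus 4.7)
The plan is to mimic the proof of Proposition \ref{prop:rella} but with the observable $q_{s,t}f$ replaced by its square $(q_{s,t}f)^2$, and then to reweight the empirical average from $\eta_s^N$ to $\nu_s^N$ by Stieltjes integration by parts against the exponential factor $Z_s := \exp(-\int_0^s \ip{H_r}{\eta_r^N}\,dr) = \ip{1}{\nu_s^N}$. The carr\'e du champ $\Gamma_s(q_{s,t}f)$ will be produced automatically by the identity $\mathcal{L}_s(g^2) = 2g\,\mathcal{L}_s g + \Gamma_s(g)$, which is the single-particle analogue of (\ref{eq:Gamma}) and is the whole reason squaring is the right manipulation.

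More concretely, first I would set $\varphi(s,x) := \ip{(q_{s,t}f)^2}{\eta(x)}$ and use the backward equation (\ref{eq:BWE}) to get
\[
\partial_s \varphi(s,x) = -2\lambda_s \ip{q_{s,t}f \cdot \mathcal{L}_s q_{s,t}f}{\eta(x)} + 2\ip{H_s(q_{s,t}f)^2}{\eta(x)}.
\]
Applying Lemma \ref{lem:mozza} to $g := (q_{s,t}f)^2$, together with $\mathcal{L}_s g = 2q_{s,t}f\cdot\mathcal{L}_s q_{s,t}f + \Gamma_s(q_{s,t}f)$, the two cross-terms $\pm 2\lambda_s \ip{q_{s,t}f\cdot\mathcal{L}_s q_{s,t}f}{\eta(x)}$ cancel and leave
\[
(\partial_s + \mathcal{L}_s^N)\varphi(s,x) = \lambda_s\ip{\Gamma_s(q_{s,t}f)}{\eta(x)} + \ip{H_s}{\eta(x)}\ip{(q_{s,t}f)^2}{\eta(x)} + \ip{H_s(q_{s,t}f)^2}{\eta(x)}.
\]
By (\ref{eq:Mt}), subtracting the time integral of this drift from $\ip{(q_{s,t}f)^2}{\eta_s^N}$ yields an $\mathcal{F}_s$-martingale whose increment I denote $d\bar M_s^{(2)}$.

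Next I would reweight. Since $Z_s$ is of bounded variation with $dZ_s^2 = -2Z_s^2 \ip{H_s}{\eta_s^N}\,ds$ and $Z_s^2 \ip{g}{\eta_s^N} = \ip{1}{\nu_s^N}\ip{g}{\nu_s^N}$ for any $g:S\to\erre$, ordinary integration by parts applied to $Y_s := Z_s^2 \ip{(q_{s,t}f)^2}{\eta_s^N}=\ip{1}{\nu_s^N}\ip{(q_{s,t}f)^2}{\nu_s^N}$ gives
\begin{align*}
Y_u - Y_0 &= \int_0^u \lambda_s \ip{1}{\nu_s^N}\ip{\Gamma_s(q_{s,t}f)}{\nu_s^N}\,ds - \int_0^u \ip{H_s}{\nu_s^N}\ip{(q_{s,t}f)^2}{\nu_s^N}\,ds\\
&\quad{}+ \int_0^u \ip{1}{\nu_s^N}\ip{H_s(q_{s,t}f)^2}{\nu_s^N}\,ds + \int_0^u Z_s^2\,d\bar M_s^{(2)}.
\end{align*}
Rearranging for the $\Gamma_s$-integral and setting $\tilde M_u := -Y_0 - \int_0^u Z_s^2\,d\bar M_s^{(2)}$ then gives the claimed identity; $\tilde M$ is an $\mathcal{F}_u$-martingale because $Y_0 = \ip{(q_{0,t}f)^2}{\eta_0^N}$ is $\mathcal{F}_0$-measurable and bounded (the state space is finite), and $\int_0^u Z_s^2\,d\bar M_s^{(2)}$ is a stochastic integral of a bounded predictable process against a martingale.

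The only delicate point is sign bookkeeping in the integration-by-parts step: one must check that the $dZ_s^2$ contribution and the $\ip{H_s}{\eta_s^N}\ip{(q_{s,t}f)^2}{\eta_s^N}$ drift term combine to $-\ip{H_s}{\nu_s^N}\ip{(q_{s,t}f)^2}{\nu_s^N}$ rather than to a symmetric $+$ term, which is precisely what makes the two $H_s$-terms in the lemma appear with opposite signs.
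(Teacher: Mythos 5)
Your proposal is correct and follows essentially the same route as the paper's proof: both apply the martingale problem to $\varphi(s,x)=\ip{(q_{s,t}f)^2}{\eta(x)}$, use the backward equation (\ref{eq:BWE}) together with Lemma \ref{lem:mozza} so that the cross terms cancel and the carr\'e du champ appears, and then reweight by $Z_s^2=\ip{1}{\nu_s^N}^2$ via integration by parts. Your sign bookkeeping (the $-2Z_s^2\ip{H_s}{\eta_s^N}\,ds$ from $dZ_s^2$ combining with the $+\ip{H_s}{\eta}\ip{(q_{s,t}f)^2}{\eta}$ drift term to give the single $-\ip{H_s}{\nu_s^N}\ip{(q_{s,t}f)^2}{\nu_s^N}$) is exactly what the paper does implicitly.
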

\begin{proof}
  Let
  \[
  Y_u := \ip{1}{\nu_u^N}\ip{(q_{ut}f)^2}{\nu_u^N} = e^{-2\int_0^u
    \ip{H_r}{\eta_r^N}\,dr} \bip{(q_{ut}f)^2}{\eta_u^N}.
  \]
  By applying the martingale problem to the functions
  $\varphi(s,x)=\bip{(q_{st}f)^2}{\eta(x)}$, we obtain
  \begin{eqnarray*}
    Y_u = e^{-2\int_0^u \ip{H_r}{\eta_r^N}\,dr} \bip{(q_{ut}f)^2}{\eta_u^N}
    &\sim &  - 2 \int_0^u e^{-2\int_0^s \ip{H_r}{\eta_r^N}\,dr}
    \ip{H_s}{\eta_s^N} \bip{(q_{st}f)^2}{\eta_s^N}\,ds\\
    && + \int_0^u e^{-2\int_0^s \ip{H_r}{\eta_r^N}\,dr}
    \left(\frac{\partial}{\partial s} +
      \L_s^N\right)\varphi(s,X_s^N)\,ds.
  \end{eqnarray*}
  Here and in the following we write $Y_u \sim Z_u$ if the processes
  $Y_u$ and $Z_u$ differ only by a martingale term.  Proceeding as in
  the proof of proposition \ref{prop:rella}, we get that
  \[
  \frac{\partial}{\partial s}\varphi(s,X_s^N) \ =\ 2 \ip{q_{st}f
    \frac{\partial}{\partial s} q_{st}f}{\eta_s^N}\ =\
  -2\lambda_s\ip{q_{st}f \L_s q_{st}f}{\eta_s^N} +
  2\ip{H_s(q_{st}f)^2}{\eta_s^N},
  \]
  and
  \[
  \L_s^N\varphi(s,X_s^N) \ =\ \lambda_s \ip{\L_s(q_{st}f)^2}{\eta_s^N}
  + \ip{H_s}{\eta_s^N} \ip{(q_{st}f)^2}{\eta_s^N} -
  \ip{H_s(q_{st}f)^2}{\eta_s^N}.
  \]
  Recalling that $\L_s(q_{st}f)^2 - 2q_{st}f\L_sq_{st}f = \Gamma_s
  (q_{s,t}f)$ and $\nu_s^N=\exp (-\int_0^s\ip{H_r}{\nu_r^N}dr)\,
  \eta_s^N$, we conclude
  \begin{eqnarray*}
    \ip{1}{\nu_u^N}\ip{(q_{ut}f)^2}{\nu_u^N} &\sim &  - \int_0^u
    \ip{H_s}{\nu_s^N} \ip{(q_{st}f)^2}{\nu_s^N}\,ds\\
    && + \int_0^u \ip{1}{\nu_s^N}
    \ip{H_s(q_{st}f)^2}{\nu_s^N}\,ds + \int_0^u
    \lambda_s\, \ip{1}{\nu_s^N}\ip{\Gamma_s
      (q_{st}f)}{\nu_s^N}\,ds,
  \end{eqnarray*}
  which proves the assertion.
\end{proof}

\begin{lemma}\label{lem:26}
For all $t\ge 0$,
\begin{equation*}
\E\Big[ \ip{1}{\nu_t^N} \ip{f^2}{\nu_t^N} \Big] \ =\ \ip{f^2}{\mu_t}
- \E\left[ \int_0^t
    \ip{H_s}{\nu_s^N} \ip{q_{st}f^2}{\nu_s^N}\,ds\right] .
\end{equation*}
\end{lemma}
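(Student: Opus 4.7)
The strategy is to mimic the proof of Lemma \ref{lem:bufala}: introduce the process
\[
W_u \ :=\ \ip{1}{\nu_u^N}\ip{q_{u,t}f^2}{\nu_u^N}\ =\ e^{-2\int_0^u \ip{H_r}{\eta_r^N}\,dr}\,\ip{q_{u,t}f^2}{\eta_u^N}
\]
for $u\in[0,t]$, which at $u=t$ reduces to $\ip{1}{\nu_t^N}\ip{f^2}{\nu_t^N}$. The plan is to identify a semimartingale decomposition of $W_u$ whose finite-variation part is exactly $-\ip{H_s}{\nu_s^N}\ip{q_{s,t}f^2}{\nu_s^N}\,ds$, and then take expectations.

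First I would apply the martingale problem (\ref{eq:Mt}) to the function $\varphi(s,x):=\ip{q_{s,t}f^2}{\eta(x)}$. By the backward equation (\ref{eq:BWE}) applied to the function $f^2$,
\[
\frac{\partial\varphi}{\partial s}(s,x)\ =\ -\lambda_s\,\ip{\L_s q_{s,t}f^2}{\eta(x)}\,+\,\ip{H_s\,q_{s,t}f^2}{\eta(x)},
\]
while by Lemma \ref{lem:mozza} applied to $g=q_{s,t}f^2$,
\[
\L_s^N \varphi(s,x)\ =\ \lambda_s\,\ip{\L_s q_{s,t}f^2}{\eta(x)}\,+\,\ip{H_s}{\eta(x)}\ip{q_{s,t}f^2}{\eta(x)}\,-\,\ip{H_s\,q_{s,t}f^2}{\eta(x)}.
\]
The $\lambda_s$ and $\ip{H_s q_{s,t}f^2}{\eta}$ terms cancel, leaving $(\partial_s+\L_s^N)\varphi(s,x)=\ip{H_s}{\eta(x)}\ip{q_{s,t}f^2}{\eta(x)}$. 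Thus there is a martingale $M^\varphi$ such that
\[
\ip{q_{u,t}f^2}{\eta_u^N}\ =\ \ip{q_{0,t}f^2}{\eta_0^N}\,+\,M^\varphi_u\,+\,\int_0^u \ip{H_s}{\eta_s^N}\ip{q_{s,t}f^2}{\eta_s^N}\,ds.
\]

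Next I would differentiate $W_u = e^{-2\int_0^u\ip{H_r}{\eta_r^N}\,dr}\ip{q_{u,t}f^2}{\eta_u^N}$ by the Stieltjes integration-by-parts formula, as in Proposition \ref{lem:asiago} and Lemma \ref{lem:bufala}. The $-2\ip{H_u}{\eta_u^N}W_u\,du$ term coming from the exponential combines with the $+\ip{H_u}{\eta_u^N}\ip{q_{u,t}f^2}{\eta_u^N}e^{-2\int_0^u\ip{H_r}{\eta_r^N}dr}\,du=\ip{H_u}{\eta_u^N}W_u\,du$ term from the drift of $\ip{q_{\cdot,t}f^2}{\eta_\cdot^N}$ to give a single $-\ip{H_u}{\eta_u^N}W_u\,du$. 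Using the identity
\[
\ip{H_u}{\eta_u^N}\,W_u\ =\ \ip{H_u}{\eta_u^N}\,\ip{1}{\nu_u^N}\,\ip{q_{u,t}f^2}{\nu_u^N}\ =\ \ip{H_u}{\nu_u^N}\,\ip{q_{u,t}f^2}{\nu_u^N},
\]
I obtain $W_t = W_0 + (\text{martingale})_t - \int_0^t \ip{H_s}{\nu_s^N}\ip{q_{s,t}f^2}{\nu_s^N}\,ds$.

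Finally, taking expectations kills the martingale, and since $\nu_0^N=\eta_0^N$ and $\ip{1}{\eta_0^N}=1$, we have $W_0 = \ip{q_{0,t}f^2}{\eta_0^N}$, whose expectation under the i.i.d. $\mu_0$-initial law is $\ip{q_{0,t}f^2}{\mu_0}=\ip{f^2}{\mu_0 q_{0,t}}=\ip{f^2}{\mu_t}$ by the invariance identity (\ref{eq:INV}). This gives the claimed formula. The only delicate point is the clean cancellation in the drift of $\varphi$: because $q_{s,t}$ is applied to $f^2$ (rather than squared), no carr\'e du champ term appears, which is what makes the identity take the particularly simple form above.
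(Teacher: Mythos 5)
Your proof is correct and follows essentially the same martingale-plus-Stieltjes-integration-by-parts strategy as the paper; the only difference is in the factorization: you write $W_u=e^{-2\int_0^u\ip{H_r}{\eta_r^N}\,dr}\,\ip{q_{u,t}f^2}{\eta_u^N}$ and re-derive the drift of $\ip{q_{\cdot,t}f^2}{\eta_\cdot^N}$ from (\ref{eq:Mt}) (this is exactly Proposition \ref{prop:rella} applied to $f^2$), whereas the paper writes $W_u=e^{-\int_0^u\ip{H_r}{\eta_r^N}\,dr}\,A^{f^2}_{u,t}$ and invokes Proposition \ref{lem:asiago} directly for the martingality of $A^{f^2}_{u,t}$. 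Both routes reduce to a single integration by parts followed by taking expectations, with the same boundary term $\E\big[\ip{q_{0,t}f^2}{\nu_0^N}\big]=\ip{q_{0,t}f^2}{\mu_0}=\ip{f^2}{\mu_t}$.
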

\begin{proof}
By the product rule for Stieltjes integrals,
\begin{eqnarray*}
    \ip{1}{\nu_s^N} \ip{q_{st}f^2}{\nu_s^N}
& =& e^{-\int_0^s \ip{H_r}{\eta_r^N}\,dr} A^{f^2}_{s,t}
\\
 &=&\int_0^s e^{-\int_0^u \ip{H_r}{\eta_r^N}\,dr} dA^{f^2}_{u,t} \ -\
\int_0^s\ip{H_u}{\nu_u^N} A_{u,t}^{f^2}\,du.
\end{eqnarray*}
Since $s\mapsto A^{f^2}_{s ,t}$ is a martingale,
\[
\E \left[ \ip{1}{\nu_t^N}\ip{f^2}{\nu_t^N}\right]\ =\
\ip{q_{0,t}f^2}{\mu_0}-\E\left[\int_0^t\ip{H_u}{\nu_u^N}A_{u,t}^{f^2}\,du\right]
.
\]
The proof is completed by noting that
$\ip{q_{0,t}f^2}{\mu_0}=\ip{f^2}{\mu_t}$.
\end{proof}

\begin{proof}[Proof of Proposition \ref{thm:bondo}]
 Fix a function
$f:S\to\mathbb{R}$ and $t\ge 0$. Recalling that, by (\ref{eq:INV}),
$\ip{f}{\mu_t}=\ip{q_{0,t}f}{\mu_0}$, we have
\begin{eqnarray*}
\ip{f}{\nu_t^N} - \ip{f}{\mu_t} &=& \ip{q_{t,t}f}{\nu_t^N} -
\ip{q_{0,t}f}{\nu_0^N}
+ \ip{q_{0,t}f}{\nu_0^N} - \ip{q_{0,t}f}{\mu_0}\\
&=& A^f_{t,t} - A_{0,t}^f + \ip{q_{0,t}f}{\nu_0^N} -
\ip{q_{0,t}f}{\mu_0}.
\end{eqnarray*}
Taking expectations on both sides, we immediately obtain
$$\E\left[\ip{f}{\nu_t^N}\right]\ =\ \ip{f}{\mu_t},$$
because $s\mapsto A_{s,t}f$ is a martingale by Proposition
\ref{lem:asiago}, and $\nu_0^N$ is the empirical distribution of $N$
i.i.d.\ random variables with distribution $\mu_0$. Moreover, by
Proposition \ref{lem:asiago} and Lemma \ref{lem:bufala},
\begin{eqnarray*}
\lefteqn{N\,\E\left[\left|\ip{f}{\nu_t^N} -
\ip{f}{\mu_t}\right|^2\right] \ =\
N\,\E\left[(A_{t,t}^f-A_{0,t}^f)^2\right]\, +\,
N\,\E\left[\left(
\ip{q_{0,t}f}{\nu_0^N}-\ip{q_{0,t}f}{\mu_0}\right)^2\right] }\\
&=&N\,\E\left[\langle A_{\bullet ,t}^f\rangle_t\right] +
\var_{\mu_0}(q_{0,t}f)\\
&=&  \E\Big[ \ip{1}{\nu_t^N}
                \ip{f^2}{\nu_t^N}
- \ip{(q_{0,t}f)^2}{\nu_0^N}\Big]\, +\, \var_{\mu_0}(q_{0,t}f)\\
&& + \E\int_0^t
                      \ip{H_s}{\nu_s^N} \ip{(q_{st}f)^2}{\nu_s^N}\,ds\
 -\ \E\int_0^t \ip{1}{\nu_s^N}
                     \ip{H_s(q_{st}f)^2}{\nu_s^N}\,ds\\
&& +  \E \int_0^t
                         \iint \big(H(x)-H(y)\big)^+
                                    \big(q_{s,t}f(y)-q_{s,t}f(x)\big)^2
                                    \,\nu_s^N(dx) \,\nu_s^N(dy)\,ds.
\end{eqnarray*}
The assertion now follows from Lemma \ref{lem:26} observing that
\begin{eqnarray*}
-\E\Big[  \ip{(q_{0,t}f)^2}{\nu_0^N}\Big]\, +\, \var_{\mu_0}(q_{0,t}f) &=&
-\ip{(q_{0,t}f)^2}{\mu_0}\, +\, \var_{\mu_0}(q_{0,t}f)\\
&=&-\ip{q_{0,t}f}{\mu_0}^2 = -\ip{f}{\mu_t}^2.
\end{eqnarray*}
\end{proof}

\section{Proof of Theorem \ref{thm:main}}     \label{sec:p1}
\begin{prop}     \label{prop:long}
Let $p$, $q$, $r \in [1,\infty]$ be such that
$p^{-1}=q^{-1}+r^{-1}$. Then, for $0 \leq s \leq t$,
\begin{align*}
  \E \left[V_{s,t}^N(f)\right] \leq &V_{s,t}(f)\\
  &\quad + \big( 6\|H_s\|_{L^q(\mu_s)} \|q_{s,t}f\|^2_{L^{2r}(\mu_s)}
       + \|H_s\|_{L^p(\mu_s)} \|q_{s,t}f^2\|_{L^p(\mu_s)} \big)
       \varepsilon_s^{N,p}.
\end{align*}
\end{prop}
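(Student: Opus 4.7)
The plan is to bound $V_{s,t}^N(f)$ from above by a symmetrized expression, take expectations, and view the deviation from $V_{s,t}(f)$ as a sum of covariances controlled by $\varepsilon_s^{N,p}$ via Cauchy--Schwarz and Hölder's inequality.

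First I would symmetrize the double integral in \eqref{eq:VN}: since $(q_{s,t}f(z)-q_{s,t}f(y))^2\,\nu_s^N(dy)\,\nu_s^N(dz)$ is symmetric in $(y,z)$, the elementary bound $|H_s(z)-H_s(y)|\le |H_s(z)|+|H_s(y)|$ yields
\[
\frac12 \iint |H_s(z)-H_s(y)|(g(z)-g(y))^2\,\nu_s^N(dy)\,\nu_s^N(dz) \le \iint |H_s(z)|(g(z)-g(y))^2\,\nu_s^N(dy)\,\nu_s^N(dz),
\]
with $g := q_{s,t}f$. Expanding $(g(z)-g(y))^2$ rewrites the resulting upper bound for $V_{s,t}^N(f)$ as a sum of five terms of the form $\ip{h_1}{\nu_s^N}\ip{h_2}{\nu_s^N}$. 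Taking $\E$ and using Proposition~\ref{thm:bondo} (so that $\E\ip{h_i}{\nu_s^N}=\ip{h_i}{\mu_s}$), I would split
\[
\E\bigl[\ip{h_1}{\nu_s^N}\ip{h_2}{\nu_s^N}\bigr]=\ip{h_1}{\mu_s}\ip{h_2}{\mu_s}+\mathrm{Cov}(\ip{h_1}{\nu_s^N},\ip{h_2}{\nu_s^N}).
\]
The five \emph{mean} contributions collapse exactly to $V_{s,t}(f)$: the middle term of $V_{s,t}^N$ vanishes thanks to $\ip{H_s}{\mu_s}=0$ from \eqref{eq:centered}, the first term uses $\ip{1}{\mu_s}=1$, and the three pieces coming from the expansion of the double integral reassemble (by reversing the symmetrization against $\mu_s$) into $\iint |H_s(x)|(g(y)-g(x))^2\,\mu_s(dx)\,\mu_s(dy)$.

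For each of the five \emph{covariance} contributions I would apply Cauchy--Schwarz: since $\ip{h}{\nu_s^N}$ is unbiased, $\var(\ip{h}{\nu_s^N})\le \|h\|^2_{L^p(\mu_s)}\,\varepsilon_s^{N,p}$ by the definition \eqref{eq:ETNP}, and hence $|\mathrm{Cov}(\ip{h_1}{\nu_s^N},\ip{h_2}{\nu_s^N})|\le \|h_1\|_{L^p(\mu_s)}\|h_2\|_{L^p(\mu_s)}\,\varepsilon_s^{N,p}$. The final task is to convert every resulting $L^p(\mu_s)$ norm into the $\|H_s\|_{L^q(\mu_s)}$ and $\|q_{s,t}f\|_{L^{2r}(\mu_s)}$ norms appearing in the statement. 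Hölder with $p^{-1}=q^{-1}+r^{-1}$ gives $\|H_s g^k\|_{L^p}\le \|H_s\|_{L^q}\|g\|^k_{L^{kr}}$, and the embeddings $p\le r\le 2r$ and $2p\le 2r$ (all immediate from $p\le q$) together with $L^b\hookrightarrow L^a$ on the probability space $(S,\mu_s)$ let me absorb four of the five covariance bounds directly into $\|H_s\|_{L^q}\|g\|^2_{L^{2r}}\,\varepsilon_s^{N,p}$. The exception is the covariance involving $q_{s,t}f^2-g^2$, where I would use the triangle inequality $\|q_{s,t}f^2-g^2\|_{L^p}\le \|q_{s,t}f^2\|_{L^p}+\|g\|^2_{L^{2p}}$: the second piece produces yet another $\|H_s\|_{L^q}\|g\|^2_{L^{2r}}$ contribution via the same embedding, while the first piece produces the separate summand $\|H_s\|_{L^p}\|q_{s,t}f^2\|_{L^p}$. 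Adding the multiplicities $1+1+2+1+1=6$ from the five covariances yields precisely the stated bound.

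The main obstacle is the bookkeeping in the last step: arranging the Hölder exponents so that all five covariances funnel into a single $\|H_s\|_{L^q}\|q_{s,t}f\|^2_{L^{2r}}$ with the correct multiplicity $6$, and identifying $q_{s,t}f^2$ as the one quantity that genuinely cannot be absorbed into $\|q_{s,t}f\|^2_{L^{2r}}$ (since the propagator does not commute with squaring) and must be retained as a separate term in the error bound. All the rest is routine Cauchy--Schwarz, Hölder, and the $L^b\hookrightarrow L^a$ embedding on $(S,\mu_s)$.
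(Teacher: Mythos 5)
Your proposal matches the paper's proof essentially step for step: the paper also bounds the symmetric double integral via $|H_s(z)-H_s(y)|\le |H_s(z)|+|H_s(y)|$ (yielding the same five bilinear pieces), applies the unbiasedness-plus-Cauchy--Schwarz estimate to replace $\E[\ip{h_1}{\nu_s^N}\ip{h_2}{\nu_s^N}]$ by $\ip{h_1}{\mu_s}\ip{h_2}{\mu_s}$ up to an error $\varepsilon_s^{N,p}\|h_1\|_{L^p(\mu_s)}\|h_2\|_{L^p(\mu_s)}$, uses $\ip{H_s}{\mu_s}=0$ and $\ip{1}{\mu_s}=1$ to collapse the means to $V_{s,t}(f)$, and then uses H\"older with $p^{-1}=q^{-1}+r^{-1}$ plus the embeddings on the probability space to obtain the multiplicity $6$ and the single residual term $\|H_s\|_{L^p(\mu_s)}\|q_{s,t}f^2\|_{L^p(\mu_s)}$. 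The bookkeeping you describe is exactly the paper's $R_{s,t}(f)$ and its final estimate.
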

\begin{proof}
  Since $\ip{f}{\nu_s^N}$ and $\ip{g}{\nu_s^N}$ are unbiased
  estimators of $\ip{f}{\mu_s}$ and $\ip{g}{\mu_s}$, respectively, we
  have, by the Cauchy-Schwarz inequality,
  \begin{equation}     \label{eq:A}
  \begin{split}
  &\big| \E[\ip{f}{\nu_s^N}\ip{g}{\nu_s^N}] 
    - \ip{f}{\mu_s}\ip{g}{\mu_s}\big|\\
  &\qquad = \big| \E\big[(\ip{f}{\nu_s^N}-\ip{f}{\mu_s}) 
    (\ip{g}{\nu_s^N}-\ip{g}{\mu_s})\big]\big|\\
  &\qquad \leq \big(\E\big|\ip{f}{\nu_s^N}-\ip{f}{\mu_s}\big|^2\big)^{1/2}
            \big(\E\big|\ip{g}{\nu_s^N}-\ip{g}{\mu_s}\big|^2\big)^{1/2}\\
  &\qquad \leq \varepsilon_s^{N,p}\, \|f\|_{L^p(\mu_s)} \, \|g\|_{L^p(\mu_s)}
  \end{split}
  \end{equation}
  for all $0 \leq s \leq t$ and all functions $f$, $g:S \to \erre$.
  Since the last term on the right-hand side of (\ref{eq:VN}) can be
  bounded by
  \[
   \iint |H_s(y)|(q_{s,t}f(z)-q_{s,t}f(y))^2 \,\nu_s^N(dz)\,\nu_s^N(dy),
  \]
  an application of (\ref{eq:A}) yields, by (\ref{eq:centered}) and
  (\ref{eq:V}),
  \begin{align*}
  \E\big[V_{s,t}^N(f)\big] &\leq -\ip{H_s(q_{s,t}f)^2}{\mu_s}\ip{1}{\mu_s}
     -\ip{H_s}{\mu_s}\ip{q_{s,t}f^2-(q_{s,t}f)^2}{\mu_s}\\
     &\quad + \iint |H_s(y)|
     (q_{s,t}f(z)-q_{s,t}f(y))^2\,\mu_s(dz)\,\mu_s(dy)\,
     + \varepsilon_s^{N,p} R_{s,t}(f)\\
     &= V_{s,t}(f) + \varepsilon_s^{N,p} R_{s,t}(f),
  \end{align*}
  where
  \begin{align*}
  R_{s,t}(f)  &= \|H_s(q_{s,t}f)^2\|_{L^p(\mu_s)}
     + \|H_s\|_{L^p(\mu_s)} \big\|q_{s,t}f^2 - (q_{s,t}f)^2 \big\|_{L^p(\mu_s)}\\
     &\quad + \|H_s\|_{L^p(\mu_s)} \big\|(q_{s,t}f)^2\big\|_{L^p(\mu_s)}
     + 2\|H_sq_{s,t}f\|_{L^p(\mu_s)} \|q_{s,t}f\|_{L^p(\mu_s)}\\
     &\quad + \big\| H_s (q_{s,t}f)^2 \big\|_{L^p(\mu_s)}\\
     &\leq \|H_s\|_{L^p(\mu_s)} \|q_{s,t}f^2\|_{L^p(\mu_s)}
     + 6 \|H_s\|_{L^q(\mu_s)} \|q_{s,t}f\|^2_{L^{2r}(\mu_s)}.
     \qedhere
  \end{align*}
\end{proof}

In order to bound $V_{s,t}^N(f)$ uniformly over $f \in L^p(\mu_t)$
with $\|f\|_{L^p(\mu_t)} \leq 1$, one needs to be able to control
$\|q_{s,t}f\|_{L^{2r}(\mu_t)}$ in terms of $\|f\|_{L^p(\mu_t)}$.
This is possible if hypercontractivity holds and $t-s$ is
sufficiently large. Over short time intervals $[s,t]$ we apply in a
first step another rough estimate instead:

\begin{lemma}     \label{lm:short}
Let $p \geq 2$ and $N \in \mathbb{N}$. Then for $0\le s \le t$,
\[
\frac 1N\,\E\big[V_{s,t}^N(f)\big] \leq 4 \osc(H_s) \Big( 1 +
\varepsilon_s^{N,p} \exp\Big(2\int_s^t\osc(H_r)\,dr\Big)\Big)
\|f\|^2_{L^p(\mu_t)}.
\]
\end{lemma}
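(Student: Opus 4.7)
The plan is to exploit a crude pointwise estimate on $V_{s,t}^N(f)$ together with the unbiasedness of $\ip{\cdot}{\nu_s^N}$. Because $\ip{H_s}{\mu_s}=0$, we have $\inf H_s \leq 0 \leq \sup H_s$, so $|H_s(x)| \leq \osc(H_s)$ and $|H_s(z)-H_s(y)| \leq \osc(H_s)$ pointwise. Applying these bounds together with $(a-b)^2 \leq 2a^2+2b^2$ to each of the three summands of \eqref{eq:VN}, the three contributions to $V_{s,t}^N(f)$ are respectively bounded by $\osc(H_s)\ip{1}{\nu_s^N}\ip{(q_{s,t}f)^2}{\nu_s^N}$, $\osc(H_s)\ip{1}{\nu_s^N}\ip{q_{s,t}f^2 + (q_{s,t}f)^2}{\nu_s^N}$, and $2\osc(H_s)\ip{1}{\nu_s^N}\ip{(q_{s,t}f)^2}{\nu_s^N}$. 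Summing,
\[
V_{s,t}^N(f) \leq \osc(H_s)\,\ip{1}{\nu_s^N}\,\ip{4(q_{s,t}f)^2 + q_{s,t}f^2}{\nu_s^N}.
\]

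Next I pass from $(q_{s,t}f)^2$ to $q_{s,t}(f^2)$ via Cauchy--Schwarz inside the Feynman--Kac representation \eqref{eq:FK}: $(q_{s,t}f)^2(x) \leq q_{s,t}(1)(x)\,q_{s,t}(f^2)(x)$. Since $-H_r \leq \osc(H_r)$, a direct estimate gives $q_{s,t}(1)(x) \leq \exp\bigl(\int_s^t \osc(H_r)\,dr\bigr)$. This yields the deterministic bound
\[
V_{s,t}^N(f) \leq \osc(H_s)\,\bigl(4 e^{\int_s^t \osc(H_r)\,dr}+1\bigr)\,\ip{1}{\nu_s^N}\ip{q_{s,t}f^2}{\nu_s^N}.
\]

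I now take expectation. Writing $G := 4(q_{s,t}f)^2+q_{s,t}f^2$ and using the unbiasedness of $\ip{\cdot}{\nu_s^N}$ from Proposition \ref{thm:bondo},
\[
\E\bigl[\ip{1}{\nu_s^N}\ip{G}{\nu_s^N}\bigr] = \ip{G}{\mu_s} + \mathrm{Cov}\bigl(\ip{1}{\nu_s^N},\ip{G}{\nu_s^N}\bigr).
\]
The mean part is handled by the invariance $\mu_s q_{s,t}=\mu_t$ combined with the Cauchy--Schwarz step above: $\ip{G}{\mu_s} \leq \bigl(4 e^{\int_s^t\osc(H_r)\,dr}+1\bigr)\ip{f^2}{\mu_t} \leq 4\,\|f\|^2_{L^p(\mu_t)}$ times a prefactor absorbed into the ``$1$'' of the stated bound (using $p\geq 2$, so $\|f\|_{L^2(\mu_t)} \leq \|f\|_{L^p(\mu_t)}$). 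The covariance is handled by Cauchy--Schwarz in probability: $|\mathrm{Cov}| \leq \sqrt{\var\ip{1}{\nu_s^N}\cdot\var\ip{G}{\nu_s^N}} \leq \varepsilon_s^{N,p}\,\|G\|_{L^p(\mu_s)}$ by the definition \eqref{eq:ETNP}. Finally, $\|G\|_{L^p(\mu_s)}$ is transferred back to $\|f\|^2_{L^p(\mu_t)}$ via a H\"older-type split applied to the Feynman--Kac propagator: one extra factor $e^{\int_s^t\osc(H_r)\,dr}$ is spent to pass $(q_{s,t}f)^2$ through $q_{s,t}(1)$, and another factor is spent to move the resulting $q_{s,t}(f^2)$ term from $\mu_s$ to $\mu_t$, producing $e^{2\int_s^t\osc(H_r)\,dr}$.

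The main obstacle is bookkeeping: the pointwise estimates and the Feynman--Kac Cauchy--Schwarz are elementary, but matching the precise constants (factor $4$ in front of $\osc(H_s)$ and exactly $e^{2\int_s^t\osc(H_r)\,dr}$ in the variance term) requires applying the H\"older split carefully, because a careless Jensen application to $q_{s,t}$ produces $\|f\|_{L^{2p}(\mu_t)}$ rather than $\|f\|_{L^p(\mu_t)}$. The $1/N$ on the left-hand side is a deliberate weakening: the bound is genuinely crude and will be combined, in the proof of Theorem \ref{thm:main}, with the much sharper Proposition \ref{prop:long} over intervals where hypercontractivity delivers useful $L^p$--$L^q$ estimates.
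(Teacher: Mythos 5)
There is a genuine gap in your argument. Your deterministic bound
\[
V_{s,t}^N(f)\ \leq\ \osc(H_s)\,\ip{1}{\nu_s^N}\,\ip{4(q_{s,t}f)^2+q_{s,t}f^2}{\nu_s^N}
\]
is correct, but it contains no factor of $N$. When you then write $\E[\ip{1}{\nu_s^N}\ip{G}{\nu_s^N}]=\ip{G}{\mu_s}+\mathrm{Cov}$ and divide the whole inequality by $N$, the mean contribution is $\tfrac1N\,\osc(H_s)\ip{G}{\mu_s}$. To bound $\ip{(q_{s,t}f)^2}{\mu_s}$ by something involving $\mu_t$ you must pay the Cauchy--Schwarz price $(q_{s,t}f)^2\le q_{s,t}(1)\,q_{s,t}(f^2)\le e^{\int_s^t\osc(H_r)\,dr}\,q_{s,t}(f^2)$, so $\ip{G}{\mu_s}\le(4e^{\int_s^t\osc(H_r)\,dr}+1)\ip{f^2}{\mu_t}$. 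That exponential factor sits in front of the \emph{mean} part, not the $\varepsilon$-part, and nothing cancels it: for the claimed inequality you would need $\tfrac1N(4e^{\int_s^t\osc(H_r)\,dr}+1)\le 4$, i.e.\ $N\gtrsim e^{\int_s^t\osc(H_r)\,dr}$. Since the lemma holds for every $N\in\mathbb N$ and arbitrary $t-s$, a prefactor that is ``absorbed into the $1$'' is not available; the $1$ is a fixed constant and cannot swallow an unbounded quantity.

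The idea you are missing is how the $1/N$ actually enters. The paper's proof uses the elementary inequality
\[
\ip{g^2}{\eta_s^N}\ =\ \frac1N\sum_{i=1}^N g(X_{s,i})^2\ \leq\ \frac1N\Big(\sum_{i=1}^N|g(X_{s,i})|\Big)^2\ =\ N\,\ip{|g|}{\eta_s^N}^2,
\]
applied with $g=q_{s,t}f$ and $g=(q_{s,t}f^2)^{1/2}$. This converts each second moment under $\eta_s^N$ into $N$ times the \emph{square of a first moment} under $\nu_s^N$, producing a bound of the form $V_{s,t}^N(f)\le N\,\osc(H_s)\big(c_1\ip{q_{s,t}|f|}{\nu_s^N}^2+c_2\ip{(q_{s,t}f^2)^{1/2}}{\nu_s^N}^2\big)$. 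After dividing by $N$, the factor is gone, and taking expectations with $\E[\ip{h}{\nu_s^N}^2]\le\ip{h}{\mu_s}^2+\varepsilon_s^{N,p}\|h\|^2_{L^p(\mu_s)}$ gives mean contributions $\ip{q_{s,t}|f|}{\mu_s}^2=\ip{|f|}{\mu_t}^2\le\ip{f^2}{\mu_t}$ and $\ip{(q_{s,t}f^2)^{1/2}}{\mu_s}^2\le\ip{q_{s,t}f^2}{\mu_s}=\ip{f^2}{\mu_t}$ -- crucially, \emph{without} any exponential prefactor. The exponential factor $e^{2\int_s^t\osc(H_r)\,dr}$ only appears through the $L^p$-growth bound \eqref{eq:lpbound} in the variance part, as in the statement. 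A secondary issue: even after repairing this, your crude replacement $|H_s|\le\osc(H_s)$ in terms (1) and (2) of \eqref{eq:VN} yields the constant $5$ rather than $4$; the paper obtains $4$ by keeping $\max H_s^-$ and $\max H_s^+$ separate and using $\max H_s^-+\max H_s^+=\osc(H_s)$ (which uses $\ip{H_s}{\mu_s}=0$).
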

\begin{proof}
  Setting
  \[
  {A}_t^f\ :=\ \ip{f}{\nu_t^N}\ =\ \ip{f}{\eta_t^N}\,
  \exp \Big( -\int_0^t \ip{H_s}{\eta_s^N}\,ds \Big),
  \]
  we have $A_t^f=\ip{f}{\eta_t^N}\, {A}_t^1$ for all
  $f:S\to\erre$. Since
  \[
  \ip{f^2}{\eta_t^N} = \frac1N \sum_{i=1}^N f(X_{t,i})^2
  \leq \frac1N \Big( \sum_{i=1}^N |f(X_{t,i})| \Big)^2
  = N \ip{|f|}{\eta_t^N}^2,
  \]
  we obtain, recalling that $\eta_t^N$ is a probability measure,
  \begin{align}
  V_{s,t}^N(f)\ &\leq\ N\, ( A_s^1)^2\, \Big( (\max H_s^-+\max H_s^+)
      \ip{|q_{s,t}f|}{\eta_s^N}^2
      + \max H_s^- \ip{(q_{s,t}f^2)^{1/2}}{\eta_s^N}^2\nonumber\\
    &\quad\qquad\qquad\qquad\qquad + 2\osc(H_s)\ip{|q_{s,t}f|}{\eta_s^N}^2\Big)\nonumber\\
    &\leq\  N\osc(H_s) \Big( 3\ip{q_{s,t}|f|}{\nu_s^N}^2 +
                         \ip{(q_{s,t}f^2)^{1/2}}{\nu_s^N}^2 \Big).
                            \label{eq:feno}
  \end{align}
  Moreover, by inequality (\ref{eq:A}),
  \[
  \E \left[\ip{f}{\nu_t^N}^2\right] \leq \ip{f}{\mu_t}^2
  + \varepsilon_t^{N,p} \|f\|^2_{L^p(\mu_t)},
  \]
  hence, taking expectations on both sides of (\ref{eq:feno}), we obtain
  \begin{align*}
  \frac1N\,\E\big[V_{s,t}^N(f)\big] &\leq 3\osc(H_s)
      \big[ \ip{q_{s,t}|f|}{\mu_s}^2 + \varepsilon_s^{N,p}
      \big\|q_{s,t}|f|\big\|^2_{L^p(\mu_s)} \big]\\
  &\quad + \osc(H_s) \big[ \ip{q_{s,t}f^2}{\mu_s}
      + \varepsilon_s^{N,p} \big\|q_{s,t} f^2\big\|^2_{L^{p/2}(\mu_t)}\big]\\
  &\leq 4 \osc(H_s) \, \left[ \ip{f^2}{\mu_t}
      + \varepsilon_s^{N,p} \exp\Big(2\int_s^t\osc(H_r)\,dr\Big)\,
        \|f\|^2_{L^p(\mu_t)} \right],\\
  \end{align*}
  where we have used the fact that
  $\ip{q_{s,t}f}{\mu_s}=\ip{f}{\mu_t}$, and the estimate
  \begin{equation}\label{eq:lpbound}
  \big\| q_{s,t}f \big\|_{L^p(\mu_t)} \ \leq\  \exp\left(\int_s^t\osc(H_r)\, dr\right)\,
  \|f\|_{L^p(\mu_s)} .
  \end{equation}
  The proof of (\ref{eq:lpbound}) is elementary and can be found in
  \cite{JMAA}.
\end{proof}
Combining Proposition \ref{prop:long} and Lemma \ref{lm:short} we
obtain the following (rough) a priori estimate:
\begin{lemma}     \label{lm:rough}
  Let $p$, $q$, $r \in [2,\infty]$ be such that
  $p^{-1}=q^{-1}+r^{-1}$, and choose $\delta$ as in Theorem
  \ref{thm:main}. If
  \[
  N \geq 25 \,\max \big(1,\bar{C}_t(p,q,\delta ) \big)
  \]
  then
  \[
  \varepsilon_{t}^{N,p} < 1.
  \]
\end{lemma}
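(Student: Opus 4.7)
The plan is to combine the exact variance identity of Proposition \ref{thm:bondo} with the two complementary estimates for $\E[V_{s,\tau}^N(f)]$ already at our disposal -- Proposition \ref{prop:long} for long times and Lemma \ref{lm:short} for short times -- splitting the integral $\int_0^\tau\E[V_{s,\tau}^N(f)]\,ds$ at the threshold $s=(\tau-\delta)^+$ with $\delta=(17\omega)^{-1}$. Fix $\tau\in[0,t]$ and $f$ with $\|f\|_{L^p(\mu_\tau)}\le 1$; since $\var_{\mu_\tau}(f)\le\|f\|_{L^2(\mu_\tau)}^2\le 1$, Proposition \ref{thm:bondo} reduces the task to bounding the two pieces of the integral.

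For the short window $[(\tau-\delta)^+,\tau]$, Lemma \ref{lm:short} together with $\omega\delta=1/17$ immediately gives
\[
\frac{1}{N}\int_{(\tau-\delta)^+}^{\tau}\E[V_{s,\tau}^N(f)]\,ds\ \le\ \frac{4}{17}\bigl(1+e^{2/17}\,\varepsilon_\tau^{N,p}\bigr).
\]
For the long window I would use Proposition \ref{prop:long}; bounding $\|H_s\|_{L^p(\mu_s)}\le\|H_s\|_{L^q(\mu_s)}$, $\|q_{s,\tau}f\|_{L^{2r}(\mu_s)}^2\le C_{s,\tau}(p,q)^2$, $\|q_{s,\tau}f^2\|_{L^p(\mu_s)}\le C_{s,\tau}(p,q)$, and absorbing $C\le C^2$ yields
\[
\E[V_{s,\tau}^N(f)]\ \le\ V_{s,\tau}(f)+7\,\|H_s\|_{L^q(\mu_s)}\,C_{s,\tau}(p,q)^2\,\varepsilon_\tau^{N,p}.
\]
The key deterministic step, and the one that makes the long-time part collapse to $\bar C_t$, is a parallel Hölder estimate for $V_{s,\tau}(f)$ itself: expanding the double integral in (\ref{eq:V}) via $(a-b)^2\le 2a^2+2b^2$ and applying Hölder with $p^{-1}=q^{-1}+r^{-1}$ term by term leads to
\[
V_{s,\tau}(f)\ \le\ 5\,\|H_s\|_{L^q(\mu_s)}\,\|q_{s,\tau}f\|_{L^{2r}(\mu_s)}^2\ \le\ 5\,\|H_s\|_{L^q(\mu_s)}\,C_{s,\tau}(p,q)^2.
\]
Integrating over $[0,(\tau-\delta)^+]$, using monotonicity $\varepsilon_s^{N,p}\le\varepsilon_\tau^{N,p}$, and invoking the definition of $\bar C_t(p,q,\delta)$ collapses the long-time contribution to $N^{-1}\bar C_t(p,q,\delta)(5+7\varepsilon_\tau^{N,p})$.

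Taking the supremum over admissible $f$ and over $\tau\in[0,t]$ (replacing $\bar C_\tau$ by $\bar C_t$, which is monotone in its time argument) produces a linear self-bound
\[
\varepsilon_t^{N,p}\ \le\ \Bigl(\tfrac{1}{N}+\tfrac{5\bar C_t}{N}+\tfrac{4}{17}\Bigr)\,+\,\Bigl(\tfrac{7\bar C_t}{N}+\tfrac{4e^{2/17}}{17}\Bigr)\,\varepsilon_t^{N,p}.
\]
Under the hypothesis $N\ge 25\max(1,\bar C_t(p,q,\delta))$, the coefficient of $\varepsilon_t^{N,p}$ on the right is at most $\tfrac{7}{25}+\tfrac{4e^{2/17}}{17}<1$, and after rearrangement the constant term is small enough to force $\varepsilon_t^{N,p}<1$. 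The main obstacle is the numerical book-keeping: the value $\delta=(17\omega)^{-1}$ is tailored so that the constants $4/17$, $e^{2/17}$ coming from the short-window bound mesh with the factors $5$ and $7$ inherited from Proposition \ref{prop:long} and from the Hölder estimate for $V_{s,\tau}(f)$. Any unnecessary looseness -- for example failing to use $\|H_s\|_{L^p}\le\|H_s\|_{L^q}$ to fold both terms of Proposition \ref{prop:long} into a single $\|H_s\|_{L^q}C_{s,\tau}(p,q)^2$ factor -- would destroy the margin and force a larger threshold for $N$.
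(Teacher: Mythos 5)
Your decomposition and ingredients match the paper's exactly: split the integral from Proposition~\ref{thm:bondo} at $s=(\tau-\delta)^+$ with $\delta=(17\omega)^{-1}$, apply Lemma~\ref{lm:short} on the short window and Proposition~\ref{prop:long} together with the H\"older estimate $V_{s,\tau}(f)\le 5\,\|H_s\|_{L^q(\mu_s)}\,\|q_{s,\tau}f\|^2_{L^{2r}(\mu_s)}$ on the long window, and close via a self-referential inequality for $\varepsilon_t^{N,p}$.

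The final numerical step, however, does not hold as you assert. Your self-bound is $\varepsilon_t^{N,p}\le A+B\,\varepsilon_t^{N,p}$ with $A=\tfrac{1}{N}+\tfrac{5\bar{C}_t}{N}+\tfrac{4}{17}$ and $B=\tfrac{7\bar{C}_t}{N}+\tfrac{4e^{2/17}}{17}$. Under the stated hypothesis $N\ge 25\max(1,\bar{C}_t(p,q,\delta))$ alone (worst case $N=25$, $\bar{C}_t\le 1$), one gets $A\le\tfrac{1}{25}+\tfrac{1}{5}+\tfrac{4}{17}\approx 0.475$ and $B\le\tfrac{7}{25}+\tfrac{4e^{2/17}}{17}\approx 0.545$, so $A/(1-B)\approx 1.04>1$: the rearrangement does \emph{not} force $\varepsilon_t^{N,p}<1$. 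You implicitly need $N\ge 50$---exactly the condition the paper invokes at this step (``since $N\ge 50$''), though it is guaranteed only by the hypothesis of Theorem~\ref{thm:main} and not by the lemma's own stated condition. With $N\ge 50$, $A\le\tfrac{1}{50}+\tfrac{1}{5}+\tfrac{4}{17}=\tfrac{387}{850}$, and $A+B<1$ holds only because $e^{2/17}<9/8$, a relative margin of roughly $10^{-4}$, so the phrase ``small enough to force'' must be backed by an explicit check. The paper packages the same arithmetic more transparently as $\varepsilon_t^{N,p}<(\tfrac{1}{50}+\tfrac{12}{25}+\tfrac{1}{2})\max(\varepsilon_t^{N,p},1)=\max(\varepsilon_t^{N,p},1)$ and concludes by contradiction if $\varepsilon_t^{N,p}\ge 1$; that $\max$-device is equivalent to your linear rearrangement once $N\ge 50$, but it makes the knife-edge nature of the constants visible rather than hiding it behind ``after rearrangement''.
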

\begin{proof}
  Note that, by (\ref{eq:V}), 
  \[
  V_{s,t}(f) \leq 5\,\|H_s\|_{L^q(\mu_s)} \, \|q_{s,t}f\|_{L^{2r}(\mu_s)}
  \]
  for any $f:S \to \erre$ and $0 \leq s \leq t$. Hence Proposition
  \ref{prop:long} implies
  \[
  \E\big[V_{s,t}^N(f)\big] \leq \|H_s\|_{L^q(\mu_s)}
     C_{s,t}(p,q)^2 \|f\|^2_{L^p(\mu_t)}
     \big(5 + 7\varepsilon_s^{N,p}\big).
  \]
  Choosing $N$ as stated we get
  \[
  \frac1N \int_0^{(t-\delta)^+} \E\big[V_{s,t}^N(f)\big]\,ds
  \leq \frac{12}{25} \|f\|^2_{L^p(\mu_t)} \max\big(\varepsilon_t^{N,p},1\big).
  \]
  On the other hand, by Lemma \ref{lm:short} and since
  $17\,\delta\,\osc(H_s) \leq 1$ for any $s \leq t$, we obtain
  \begin{align*}
  \frac1N \int_{(t-\delta)^+}^t \E\big[V_{s,t}^N(f)\big]\,ds &\leq
  \frac4{17}\big( 1 + \varepsilon_t^{N,p} e^{2/17} \big)
  \|f\|^2_{L^p(\mu_t)}\\
  &< \frac12 \|f\|^2_{L^p(\mu_t)} \max\big(\varepsilon_t^{N,p},1\big).
  \end{align*}
  Hence by Proposition \ref{thm:bondo}, since $N \geq 50$, we get
  \begin{align*}
  \varepsilon_{t}^{N,p} &= \sup\Big\{ \frac1N \var_{\mu_t}(f)
     + \frac1N \int_0^t \E\big[V_{s,t}^N(f)\big]\,ds \,\Big|\;
       f:S\to\erre \text{ with } \|f\|_{L^p(\mu_r)} \leq 1,\;r\in[0,t]
  \Big\}\\
  &< \left(\frac1{50} + \frac{12}{25} + \frac12\right)
       \max\big(\varepsilon_t^{N,p},1\big).
  \qedhere
  \end{align*}
\end{proof}
The a priori estimate just obtained can be used instead of Lemma
\ref{lm:short} to estimate $\E\left[ V_{s,t}^N(f)\right] $ when
$t-s$ is small:
\begin{lemma}     \label{prop:tilde}
Let $q \in ]6,\infty]$ and $p \in ]4q/(q-2),\infty[$. Suppose that
\[
N \geq 25 \max \big(1,\bar{C}_t(\tilde{p},q,\delta)\big),
\]
where $\tilde p $ is defined by $\tilde p^{-1}=q^{-1}+(p/2)^{-1}$.
Then for $0 \leq s \leq t \leq t_0$,
\[
\E \left[ V_{s,t}^N(f)\right]\ \leq \ V_{s,t}(f) + 7\,\exp\left(
2\int_s^t\osc(H_r)\,dr\right) \|H_s\|_{L^q(\mu_s)}
\|f\|^2_{L^p(\mu_t)} . \]
\end{lemma}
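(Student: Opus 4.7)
The plan is to apply Proposition \ref{prop:long} with a judiciously chosen H\"older triple and then to remove the resulting factor $\varepsilon_s^{N,\cdot}$ by invoking the a priori bound in Lemma \ref{lm:rough}. The key observation is that the relation $\tilde p^{-1}=q^{-1}+(p/2)^{-1}$ defining $\tilde p$ is exactly the H\"older condition needed by Proposition \ref{prop:long} with the triple $(p^\ast,q^\ast,r^\ast)=(\tilde p,q,p/2)$. Moreover, the assumptions $q>6$ and $p>4q/(q-2)$ are tailored so that $\tilde p>2$ (equivalently $2/p<1/2-1/q$), which puts the triple in $[2,\infty]$, as required by Lemma \ref{lm:rough}.

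Substituting this triple into Proposition \ref{prop:long} (note that $2r^\ast=p$) would yield
\[
\E\bigl[V^N_{s,t}(f)\bigr]\leq V_{s,t}(f)+\Bigl(6\|H_s\|_{L^q(\mu_s)}\|q_{s,t}f\|^2_{L^p(\mu_s)}+\|H_s\|_{L^{\tilde p}(\mu_s)}\|q_{s,t}(f^2)\|_{L^{\tilde p}(\mu_s)}\Bigr)\varepsilon_s^{N,\tilde p}.
\]
I would then dominate each of the two norm products by a multiple of $\|H_s\|_{L^q(\mu_s)}\|f\|^2_{L^p(\mu_t)}$. For the first, the propagator contractivity estimate (\ref{eq:lpbound}) gives $\|q_{s,t}f\|^2_{L^p(\mu_s)}\leq e^{2\int_s^t\osc(H_r)\,dr}\|f\|^2_{L^p(\mu_t)}$. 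For the second, the monotonicity of $L^p$-norms on a probability space provides $\|H_s\|_{L^{\tilde p}}\leq\|H_s\|_{L^q}$ (since $\tilde p\leq q$) and $\|q_{s,t}(f^2)\|_{L^{\tilde p}}\leq\|q_{s,t}(f^2)\|_{L^{p/2}}$ (since $\tilde p\leq p/2$), and another application of (\ref{eq:lpbound}) with exponent $p/2$ yields $\|q_{s,t}(f^2)\|_{L^{p/2}(\mu_s)}\leq e^{\int_s^t\osc(H_r)\,dr}\|f\|^2_{L^p(\mu_t)}$. Since the smaller exponential is dominated by $e^{2\int_s^t\osc(H_r)\,dr}$, the bracket is bounded by $7e^{2\int_s^t\osc(H_r)\,dr}\|H_s\|_{L^q(\mu_s)}\|f\|^2_{L^p(\mu_t)}$.

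Finally, the hypothesis $N\geq 25\max(1,\bar C_t(\tilde p,q,\delta))$ is precisely the condition of Lemma \ref{lm:rough} applied with $\tilde p$ in the role of $p$, so that lemma delivers $\varepsilon_s^{N,\tilde p}\leq\varepsilon_t^{N,\tilde p}<1$ for every $s\in[0,t]$. Inserting this bound into the display above yields the claim. The only real subtlety lies in the exponent bookkeeping: one has to choose the H\"older triple so that its admissibility condition $\tilde p\geq 2$ forces exactly the range $p>4q/(q-2)$ in the hypotheses, and so that the companion constant $\bar C_t(\tilde p,q,\delta)$ matches the one appearing in Lemma \ref{lm:rough}. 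Once this alignment is recognized, everything else is routine H\"older and propagator estimates.
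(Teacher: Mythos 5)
Your proof is correct and follows the same route as the paper's: apply Proposition \ref{prop:long} with the triple $(\tilde p,q,p/2)$, bound both norm products via $L^p$-monotonicity and the propagator estimate (\ref{eq:lpbound}), and then kill the $\varepsilon_s^{N,\tilde p}$ factor by Lemma \ref{lm:rough}. You merely spell out the norm bookkeeping that the paper leaves implicit in the phrase ``the claim follows by Lemma \ref{lm:rough} and the estimate (\ref{eq:lpbound}).''
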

\begin{proof}
  Note that $\tilde{p}^{-1}=q^{-1}+(p/2)^{-1}<1/2$ by the assumptions
  on $p$ and $q$. Applying Proposition \ref{prop:long} with $p$, $q$,
  $r$ replaced by $\tilde{p}$, $\tilde{q}:=q$, and $\tilde{r}:=p/2$,
  respectively, yields
  \[
  \E\big[V_{s,t}^N(f)\big] \leq V_{s,t}(f)
     + \big( \|H_s\|_{L^{\tilde{p}}(\mu_s)} \|q_{s,t}f^2\|_{L^{\tilde{p}}(\mu_s)}
     +6\|H_s\|_{L^q(\mu_t)} \|q_{s,t}f\|^2_{L^p(\mu_s)} \big)\,
  \varepsilon_s^{N,\tilde{p}}
  \]
  Since $\tilde{p} < \min (q,p/2)$, the claim follows by Lemma \ref{lm:rough}
  and the estimate (\ref{eq:lpbound}).
\end{proof}

We are now ready to prove the theorem:
\begin{proof}[Proof of Theorem \ref{thm:main}]
By Proposition \ref{prop:long} we have
\[
\E\big[V_{s,t}^N(f)\big] \leq V_{s,t}(f) +
7\|H_s\|_{L^q(\mu_s)} C_{s,t}(p,q)^2 \|f\|^2_{L^p(\mu_t)}
\varepsilon_t^{N,p}
\]
for any $f:S \to \erre$ and $0 \leq s \leq t$. Therefore by Proposition
\ref{thm:bondo}, Lemma \ref{prop:tilde}, and the choice of $\delta$,
\begin{align*}
&N\,\E\big| \ip{f}{\nu_t^N} - \ip{f}{\mu_t} \big|^2 =
\var_{\mu_t}(f) + \int_0^{(t-\delta )^+} \E\big[V_{s,t}^N(f)\big]\,ds
+ \int_{(t-\delta )^+}^t \E\big[V_{s,t}^N(f)\big]\,ds\\
&\qquad\leq
\var_{\mu_t}(f) + \int_0^t V_{s,t}(f)\,ds\\
&\qquad\quad + \Big[7\bar{C}_t(p,q,\delta)\varepsilon_t^{N,p}
+ 7 e^{2/17} \int_{(t-\delta)^+}^t \|H_s\|_{L^q(\mu_s)}\,ds
\Big] \, \|f\|^2_{L^p(\mu_t)}.
\end{align*}
Observing that $\|H_s\|_{L^q(\mu_s)} \leq \osc(H_s)$ and
that $7\, e^{2/17}/17<1$, we obtain (\ref{eq:main1}).\smallskip

Furthermore, by maximizing (\ref{eq:main1}) over all $f:S \to \erre$
such that $\|f\|_{L^p(\mu_t)} \leq 1$ and over $t$, we get
\[
N\varepsilon_t^{N,p} \leq 2 + v_t^p + 7\bar{C}_t(p,q,\delta)\varepsilon_t^{N,p}
\]
for all $t\in [0,t_0]$. Recalling that $N>25\bar{C}_t(p,q,\delta)$ by
assumption, we obtain
\begin{align*}
\varepsilon_t^{N,p} &\leq \frac{2+v_t^p}{N-7\bar{C}_t(p,q,\delta)}
= (2+v_t^p) \left( \frac1N + \frac{7\bar{C}_t(p,q,\delta)}{
N(N-7\bar{C}_t(p,q,\delta))} \right)\\
&\leq (2+v_t^p) \, N^{-1} \, \Big(
1 + \frac{7\cdot 25}{18} \bar{C}_t(p,q,\delta) N^{-1} \Big),
\end{align*}
which implies (\ref{eq:main2}).
\end{proof}

\section{Proofs of Theorems \ref{thm:main2} and \ref{thm:main3}}
\label{sec:p2}
\begin{proof}[Proof of Theorem \ref{thm:main2}]
  By the estimates in \cite{JMAA} we have, for $0\le s\le t\le t_0$,
  \[
  \| q_{s,t}f \|_{L^p(\mu_s)} \leq 2^{1/4} \| f \|_{L^p(\mu_s)}
  \]
  for all $f:S\to\erre$, provided
  \begin{equation}\label{eq:22}
    \lambda_s\ \ge\ \frac{p}{4}\, A_s\,+\,\frac{p(p+3)}{4}\,t_0\,
    B_s\qquad \mbox{for all }s\in [0,t_0].
  \end{equation}
  Hence, under
  this condition, we get $C_{s,t}(p) \leq 2^{1/4}$. Moreover, by
  \cite{JMAA},
  \[
  \|q_{t-\delta,t}f\|_{L^q(\mu_{t-\delta})}\ \leq\
  \exp \big( \int_{t-\delta}^t \max H_r^-\,dr \big) \,\|f\|_{L^p(\mu_t)}
  \]
  for all $f:S \to \erre$ and $0\le\delta\le t\le t_0$, provided
  \begin{equation}
    \label{eq:HYP}
    \lambda_s \geq \frac{\gamma_s}{4\delta}
    \log\frac{q-1}{p-1} \qquad \mbox{for all } s\in [0,t_0].
  \end{equation}
  Choosing $\delta = (17\omega)^{-1}$, we obtain that, for $s \leq
  t-\delta$,
  \[
  \| q_{s,t}f\|_{L^p(\mu_s)} \ =\
  \|q_{s,t-\delta}q_{t-\delta,t}f\|_{L^p(\mu_s)} \ \leq\
  2^{1/4} e^{1/17} \|f\|_{L^q(\mu_t)},
  \]
  if both (\ref{eq:22}) and (\ref{eq:HYP}) hold. Hence
  \[
  C_{s,t}(p,q) \leq 2^{1/4}e^{1/17}
  \]
  provided (\ref{eq:22}) holds and
  \[
  \lambda_s \geq \frac{\gamma_s}{4\delta} \log \max \big(
    \frac{2r-1}{p-1},\frac{2p-2}{p-2} \big)
    \qquad \mbox{for all } s \in [0,t_0].
  \]
  Since $2<\tilde p<p$ and $\tilde p^{-1}=q^{-1}+ (p/2)^{-1}$, we
  obtain similarly that $C_{s,t}(\tilde p,q) \leq 2^{1/4}e^{1/17}$
  provided (\ref{eq:22}) holds and
  \[
    \lambda_s  \geq \frac{\gamma_s}{4\delta} \log \max \big(
        \frac{p-1}{\tilde{p}-1},\frac{2\tilde{p}-2}{\tilde{p}-2}\big)
    \qquad \mbox{for all } s \in [0,t_0].
  \]
  Hence by (\ref{eq:stella}) and (\ref{eq:stst}) we obtain
  \[
  v_t(p) \leq 5\cdot 2^{1/2}\, K_t(2), \qquad
  \bar{C}_t(p,q,\delta) \leq 2^{1/2}\,e^{2/17}\,K_t(q), \qquad
  \bar{C}_t(\tilde{p},q,\delta) \leq 2^{1/2}\,e^{2/17}\,K_t(q)
  \]
  for any $t \leq t_0$. 
  The assertion now follows from Theorem \ref{thm:main}.
\end{proof}

\medskip

\begin{proof}[Proof of Lemma \ref{lm:nueta}]
  For a function $f:S\to\erre$ and $t \geq 0$ let
  $f_t:=f-\ip{f}{\mu_t}$. Then
  \[
  \ip{f_t}{\eta_t^N} = \ip{f}{\eta_t^N}-\ip{f}{\mu_t}
  \]
  and, by (\ref{eq:REWEMP}),
  \begin{equation}     \label{eq:revamp}
  \ip{f_t}{\nu_t^N} = \ip{1}{\nu_t^N} \ip{f_t}{\eta_t^N}.
  \end{equation}
  Hence
  \begin{align*}
    \E\big[ \ip{f_t}{\eta_t^N}^2 \big] &\leq 2\,\E\Big[ \big(
    \ip{f_t}{\eta_t^N}-\ip{f_t}{\nu_t^N}\big)^2\Big]
    + 2\,\E\big[ \ip{f_t}{\nu_t^N}^2 \big]\\
    &= 2\,\E\Big[ \big( \ip{1}{\nu_t^N}-1\big)^2
    \ip{f_t}{\eta_t^N}^2\Big]
    + 2\,\E\big[ \ip{f_t}{\nu_t^N}^2 \big]\\
    &\leq 2\, \| f_t\|^2_{\sup } \,\E\left[ \left(
        \ip{1}{\nu_t^N}-1\right)^2\right] + 2\,\E\left[
      \ip{f_t}{\nu_t^N}^2\right].
  \end{align*}
  Applying this bound and (\ref{eq:revamp}), we obtain the $L^1$
  estimate:
  \begin{align*}
    \E\big[ \big|\ip{f_t}{\eta_t^N}\big| \big] &=
    \E\big[ \big|\ip{f_t}{\eta_t^N} \big(1-\ip{1}{\nu_t^N}\big)\big|
      \big]
    + \E\big[\big|\ip{f_t}{\nu_t^N}\big| \big]\\
    &\leq
      \E\left[ \ip{f_t}{\eta_t^N}^2\right]^{1/2}
      \E\big[\big(
          \ip{1}{\nu_t^N}-1\big)^2\big]^{1/2} + \E\left[
        \ip{f_t}{\nu_t^N}^2\right]^{1/2}\\
    &\leq \E\left[ \ip{f_t}{\nu_t^N}^2\right]^{1/2}\, +\,\sqrt 2\,\|
    f_t\|_{\sup }\E\left[ \left( \ip{1}{\nu_t^N}-1\right)^2\right]\\
    &\quad + \sqrt 2\,\E\left[ \ip{f_t}{\nu_t^N}^2\right]^{1/2}\E\left[
      \left( \ip{1}{\nu_t^N}-1\right)^2\right]^{1/2}.
  \end{align*}
  This proves Lemma \ref{lm:nueta}.
\end{proof}

\smallskip

\begin{proof}[Proof of Corollary \ref{cor:estf}] 
  The first assertion is an immediate consequence of (\ref{eq:main1})
  and (\ref{eq:main3}). The second assertion follows by the first one
  and (\ref{eq:L1L2}).
\end{proof}

\smallskip

\begin{proof}[Proof of Theorem \ref{thm:main3}]
  Fix $i \in I$ and define
  \[
  h_t(i) := \ip{H_t}{\mu_t^i} = {\int_{S_i} H_t\,d\mu_t}/{\mu_t(S_i)}.
  \]
  Note that
  \[
  h_t(i) = - \frac{d}{dt} \log \mu_t(S_i).
  \]
  Since (\ref{eq:timedep}) and (\ref{eq:centered}) hold,
  $H_t^i=H_t-h_t(i)$ is the negative logarithmic time derivative of
  $\mu_t^i$. If we define $q_{s,t}^if$ for functions $f:S_i \to \erre$
  in the same way as $q_{s,t}f$ with $H_t$ replaced by $H_t^i$, then
  \[
  q_{s,t}f(x)\ =\ \exp \big( -\int_s^t h_r(i)\,dr \big)\, q_{s,t}^if(x)\
  =\
  \frac{\mu_t(S_i)}{\mu_s(S_i)}\, q_{s,t}^if(x).
  \]
  In particular, for $p \in [1,\infty]$, we have
  \begin{equation}     \label{eq:LOCEST}
  \| q_{s,t}f\|_{L^p(\mu_s)}^\sim \ = \ \max_{i \in I} \|q_{s,t}f\|_{L^p(\mu_s^i)}
  \ \leq\ \max_{i \in I}\, \frac{\mu_t(S_i)}{\mu_s(S_i)}\, \|q_{s,t}^if\|_{L^p(\mu_s^i)}.
  \end{equation}
  Assuming Poincar\'e and log Sobolev inequalities with respect to the
  measures $\mu_t^i$ and the functions $H_t^i$, we obtain the same
  type of $L^p$-$L^q$ bounds for the operators $q_{s,t}^i$ as we did
  for the operators $q_{s,t}$ in the proof of Theorem
  \ref{thm:main2}. Because of (\ref{eq:LOCEST}) the assertion then
  follows similarly as above.
\end{proof}

\appendix

\section{Spectral gap and LSI for 1D Metropolis}
In this appendix we prove upper bounds for the Poincar\'e and
logarithmic Sobolev constants for Random Walk Metropolis algorithms on
a finite subset $S$ of $\mathbb{Z}$. Let
$S:=\{a,a+1,\ldots,-1,0,1,\ldots,a+\Delta-1\}$ with $a \in
\mathbb{Z}$ and $\Delta \in \mathbb{N}$ such that $0 \in S$.  We
assume that $\mu$ is a probability measure on $S$
satisfying
\begin{itemize}
\item[(i)] $\mu(x) \leq \rho \mu(y)$ for any $x$, $y \in
  [-s,s]$;
\item[(ii)] $\mu(x+1) \leq \alpha\mu(x)$ for any $x \geq s$, and
$\mu(x-1) \leq \alpha\mu(x)$ for any $x \leq -s$,
\end{itemize}
for appropriate constants $s \in \mathbb{Z}_+$, $\rho \in
[1,+\infty[$, and $\alpha \in ]0,1[$.
For notational convenience, we set
\[
b := a + \Delta - 1,
\qquad
r := \frac{1}{1-\alpha} \wedge \Delta,
\qquad
u := s \wedge \Delta.
\]
The Random Walk Metropolis chain for sampling from $\mu$ is the Markov
chain on $S$ with generator $\L$ satisfying
\[
\L(x,y) =
\begin{cases}
  \frac12 \min\Big( \frac{\mu(y)}{\mu(x)},1 \Big), & \text{if } |y-x|=1,\\
  0, & \text{if } |y-x|>1.
\end{cases}
\]
To estimate the Poincar\'e constant for this dynamics, we can apply a
general upper bound for one-dimensional Markov chains due to Miclo
\cite{Miclo-H}, which implies in our case
\begin{equation}     \label{eq:app0}
C^{\mathrm{Poi}} \leq 4\,\max(B^+,B^-),
\end{equation}
where
\[
B^+ := \max_{1\leq k \leq b} B_k^+,
\qquad
B_k^+ := \sum_{x=1}^k \frac{1}{\mu(x-1) \wedge \mu(x)}
\sum_{x=k}^b \mu(x),
\]
\[
B^- := \max_{a\leq k \leq -1} B_k^-,
\qquad
B_k^- := \sum_{x=k}^{-1} \frac{1}{\mu(x+1) \wedge \mu(x)}
\sum_{x=a}^k \mu(x).
\]
The bound is sharp up to a factor $4$, see \cite{Miclo-H}.  We are
going to estimate $B_k^+$ in the cases $k>s$ and $k \leq s$
separately. Corresponding bounds hold for $B_k^-$. Let us assume first
that $k > s$. Then we have, by (ii),
\[
\sum_{x=s+1}^k \frac{1}{\mu(x-1) \wedge \mu(x)} =
\sum_{x=s+1}^k \frac{1}{\mu(x)} \leq
\frac{1}{\mu(k)} \sum_{i=0}^{k-s-1} \alpha^i
\leq \frac{r}{\mu(k)}.
\]
and, by (i) and (ii),
\[
\sum_{x=1}^s \frac{1}{\mu(x-1) \wedge \mu(x)} \leq
\frac{\rho u}{\mu(s)} \leq
\frac{\alpha^{k-s}\rho u}{\mu(k)}.
\]
Hence
\begin{equation}     \label{eq:app1}
\sum_{x=1}^k \frac{1}{\mu(x-1) \wedge \mu(x)} \leq
(r + \alpha^{k-s}\rho u) \frac1{\mu(k)}.
\end{equation}
Similarly, by (ii),
\begin{equation}     \label{eq:app12}
\sum_{x=k}^b \mu(x) \leq \mu(k) \sum_{i=0}^{b-k} \alpha^i \leq r\mu(k).
\end{equation}
Therefore (\ref{eq:app1}) and (\ref{eq:app12}) yield
\begin{equation}     \label{eq:app3}
B_k^+ \leq r \big( r + \alpha^{k-s} \rho u \big)
\leq r^2 + \rho\,u\,r
\qquad \text{for any } k > s.
\end{equation}
Let us now consider the case $k \leq s$: by (i) and since $s
\wedge b \leq u$, we have
\[
\sum_{x=1}^k \frac{1}{\mu(x-1) \wedge \mu(x)}
\sum_{x=k}^{s \wedge b-1} \mu(x)
= \sum_{x=1}^k \sum_{y=k}^{s \wedge b-1}
\frac{\mu(y)}{\mu(x-1) \wedge \mu(x)}
\leq \rho k (u-k) \leq \rho u^2/4.
\]
Moreover, similarly to (\ref{eq:app12}), we have
\[
\sum_{x=s\wedge b}^b \mu(x) \leq r \mu(s \wedge b),
\]
hence, by (i) and since $k \leq s$ and $k \leq \Delta$,
\[
\sum_{x=1}^k \frac{1}{\mu(x-1) \wedge \mu(x)}
\sum_{x=s\wedge b}^b \mu(x)
\leq r\sum_{x=1}^k \frac{\mu(s\wedge b)}{\mu(x-1) \wedge \mu(x)}
\leq \rho\,k\,r \leq \rho\,u\,r.
\]
Combining these estimates, we obtain
\begin{equation}     \label{eq:app4}
B_k^+ \leq \frac14 \rho u^2 + \rho u r,
\qquad \text{for any } k \leq s.
\end{equation}
By (\ref{eq:app3}) and (\ref{eq:app4}), we finally obtain
\[
B^+ := \max_{k=1,\ldots,b} B_k^+ \leq \rho u r + \max(r^2,\rho u^2/4).
\]
Observing that the same estimate holds for $B^-$, we have shown:
\begin{thm}     \label{thm:metro-Poi}
  The Poincar\'e constant $C^{\mathrm{Poi}}$ for the Random Walk
  Metropolis chain with stationary distribution $\mu$ satisfies
  \[
  C^{\mathrm{Poi}} \leq 4\rho u r + \max(4r^2,\rho u^2)
  \]
\end{thm}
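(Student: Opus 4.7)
The plan is to apply Miclo's sharp one-dimensional Poincar\'e bound recalled above, which gives $C^{\mathrm{Poi}} \leq 4\max(B^+,B^-)$, and then estimate the quantities $B_k^\pm$ by exploiting the two different growth regimes of $\mu$ encoded in hypotheses (i)--(ii). By the left-right symmetry between the two hypotheses, it suffices to produce a uniform bound on $B_k^+$ for $k \in \{1,\dots,b\}$, splitting into a ``tail'' regime $k > s$ and a ``bulk'' regime $k \leq s$; the same argument applied to the reflected chain then handles $B_k^-$.

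For the tail regime $k > s$, I would split the inner sum $\sum_{x=1}^k 1/(\mu(x-1)\wedge\mu(x))$ at $x=s$. On $\{s+1,\dots,k\}$ condition (ii) yields $\mu(x) \geq \alpha^{k-x}\mu(k)$, so the geometric sum is at most $r/\mu(k)$ by definition of $r$. On $\{1,\dots,s\}$ condition (i) gives $\mu(x) \geq \mu(s)/\rho$, and (ii) applied between $s$ and $k$ provides $\mu(s) \geq \alpha^{k-s}\mu(k)$, contributing a term of size $\alpha^{k-s}\rho u/\mu(k)$. The outer sum $\sum_{x=k}^b \mu(x)$ is another geometric sum controlled by $r\mu(k)$ via (ii). Multiplying and using $\alpha^{k-s}\leq 1$ produces $B_k^+ \leq r^2 + \rho u r$.

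For the bulk regime $k \leq s$, the outer sum $\sum_{x=k}^b \mu(x)$ would be decomposed at $s \wedge b$. On $[k, s\wedge b - 1]$ condition (i) allows every ratio $\mu(y)/(\mu(x-1)\wedge\mu(x))$ to be replaced by $\rho$, collapsing the double sum to $\rho\, k\,(u-k)$; the elementary inequality $k(u-k) \leq u^2/4$ then gives at most $\rho u^2/4$. On $[s\wedge b, b]$ the geometric decay bounds $\sum \mu(x) \leq r\mu(s\wedge b)$, and one more application of (i) to the remaining $1/(\mu(x-1)\wedge\mu(x))$ factors produces $\rho u r$. Combining the two regimes and feeding $B^+ \leq \rho u r + \max(r^2, \rho u^2/4)$ into Miclo's bound yields the announced estimate. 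There is no serious analytic difficulty; the main obstacle is purely bookkeeping, namely aligning the decomposition of each sum with the boundary $s$ between the near-uniform and geometric regimes of $\mu$, and verifying that the parasitic factors $\alpha^{k-s}$ drop out so that the final bound depends only on $\rho$, $u$, and $r$.
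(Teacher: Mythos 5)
Your proposal follows the same route as the paper: Miclo's one-dimensional bound $C^{\mathrm{Poi}}\le 4\max(B^+,B^-)$, followed by a two-regime estimate of $B_k^+$, splitting the inner sum at $s$ when $k>s$ and the outer sum at $s\wedge b$ when $k\le s$, and combining exactly as in the statement. One small slip worth fixing: in the tail regime, hypothesis (ii) gives $\mu(k)\le\alpha^{k-s}\mu(s)$, i.e.\ $\mu(s)\ge\alpha^{-(k-s)}\mu(k)$ (not $\mu(s)\ge\alpha^{k-s}\mu(k)$ as you wrote); it is this direction that produces the decaying factor $\alpha^{k-s}/\mu(k)$ you then correctly use.
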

\begin{proof}
  The result holds by the upper bound (\ref{eq:app0}).
\end{proof}

\medskip

For the corresponding logarithmic Sobolev constant the following upper
bound follows from the results in \cite{Miclo-H}:
\[
\gamma \leq 20 \max (\beta^+,\beta^-),
\]
where
\[
\beta^+ := \max_{1\leq k \leq b} \beta_k^+,
\qquad
\beta_k^+ := \sum_{x=1}^k \frac{2}{\mu(x-1) \wedge \mu(x)}
\sum_{x=k}^b \mu(x)
\Big| \log \sum_{x=k}^b \mu(x) \Big|,
\]
\[
\beta^- := \max_{a\leq k \leq -1} \beta_k^-,
\qquad
\beta_k^- := \sum_{x=k}^{-1} \frac{2}{\mu(x+1) \wedge \mu(x)}
\sum_{x=a}^k \mu(x)
\Big| \log \sum_{x=a}^k \mu(x) \Big|.
\]
Again, the bound is sharp up to an explicit numerical constant.
A rough estimate for $\beta_k^+$ can easily be obtained observing that
\[
\Big| \log \sum_{x=k}^b \mu(x) \Big| =
\log \Big( \sum_{x=k}^b \mu(x) \Big)^{-1}
\leq \log \frac1{\mu(k)} \leq \log \frac1{\mu_*},
\]
where $\mu_* = \min_x \mu(x)$. In fact, this implies
\[
\beta_k^+ \leq 2B_k^+ \log \frac1{\mu_*},
\]
hence upper bounds for $\beta^+$ and $\beta^-$ can be obtained from
the corresponding bounds for $B^+$ and $B^-$ simply by multiplying by
a factor $2 \log \mu_*^{-1}$. In particular, the upper bound for
$C^{\mathrm{Poi}}$ derived above yields an upper bound for $\gamma$:
\begin{thm}     \label{thm:metro-LS}
One has
\[
\gamma \leq 10 \big( 4\rho u r + \max(\rho u^2,4r^2)\big)
\log \frac1{\mu_*}.
\]
\end{thm}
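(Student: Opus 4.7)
The plan is to follow the route already sketched in the text just before the theorem statement, combining Miclo's one-dimensional bound for the logarithmic Sobolev constant with the Poincar\'e estimate of Theorem \ref{thm:metro-Poi}.

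First, I would invoke the Hardy-type upper bound of Miclo \cite{Miclo-H}, which in the form stated yields $\gamma \leq 20\max(\beta^+,\beta^-)$, where the quantities $\beta_k^\pm$ differ from the Poincar\'e weights $B_k^\pm$ only through the insertion of the factor $|\log\sum_{x\geq k}\mu(x)|$ (respectively $|\log\sum_{x\leq k}\mu(x)|$) and an extra factor $2$. Next I would use the elementary estimate
\[
\Big|\log \sum_{x=k}^b \mu(x)\Big| = \log \Big(\sum_{x=k}^b \mu(x)\Big)^{-1} \leq \log \frac{1}{\mu(k)} \leq \log \frac{1}{\mu_*},
\]
and the analogous estimate for the sum from $a$ to $k$. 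Since this bound does not depend on $k$, it can be pulled out of the maximum, giving $\beta_k^+ \leq 2 B_k^+\log\mu_*^{-1}$ and $\beta_k^- \leq 2 B_k^-\log\mu_*^{-1}$.

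Taking the maximum over $k$ and combining with Miclo's prefactor $20$, I obtain
\[
\gamma \leq 20\cdot 2\max(B^+,B^-)\log\frac{1}{\mu_*} = 40\max(B^+,B^-)\log\frac{1}{\mu_*}.
\]
The bound $\max(B^+,B^-)\leq \rho u r + \max(r^2,\rho u^2/4)$ established in the proof of Theorem \ref{thm:metro-Poi} then gives, after distributing the factor $40$,
\[
\gamma \leq 40\bigl(\rho u r + \max(r^2,\rho u^2/4)\bigr)\log\frac{1}{\mu_*} = 10\bigl(4\rho u r + \max(4r^2,\rho u^2)\bigr)\log\frac{1}{\mu_*},
\]
which is the claimed inequality.

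There is essentially no obstacle here beyond bookkeeping: all the analytical work (the assumptions on $\mu$, the Hardy-type decomposition into $B_k^\pm$, the estimates for $B_k^+$ in the regimes $k>s$ and $k\leq s$) has already been carried out for the Poincar\'e bound, so the only new ingredient is the uniform logarithmic factor $\log\mu_*^{-1}$. The step one has to be careful with is simply verifying that the $k$-uniform bound $|\log\sum_{x\geq k}\mu(x)|\leq \log\mu_*^{-1}$ can indeed be applied, since Miclo's inequality for $\gamma$ is sharp only up to an explicit numerical constant, so the loss incurred by replacing the $k$-dependent log term by $\log\mu_*^{-1}$ is absorbed into the stated constant $10$.
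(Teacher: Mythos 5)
Your proposal is correct and follows exactly the same route as the paper: apply Miclo's $\gamma \leq 20\max(\beta^+,\beta^-)$, bound the logarithmic factor uniformly in $k$ by $\log\mu_*^{-1}$ to get $\beta_k^\pm \leq 2 B_k^\pm\log\mu_*^{-1}$, and plug in the bound on $\max(B^+,B^-)$ already derived for the Poincar\'e constant. The arithmetic $40\bigl(\rho u r + \max(r^2,\rho u^2/4)\bigr) = 10\bigl(4\rho u r + \max(4r^2,\rho u^2)\bigr)$ checks out, so the claimed inequality follows.
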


\subsection*{Example: A discrete Gauss model}
Assume that
\[
\mu(x) \propto \exp\Big( -\frac{x^2}{2\sigma^2} \Big)
\]
for some finist constant $\sigma>0$.
Then one can check that (i) and (ii) above are satisfied with
\[
s = \lfloor \sigma \rfloor,
\qquad
\rho = e^{1/2},
\qquad
\alpha = \frac{\mu(s+1)}{\mu(s)} 
= \exp\Big(- \frac{\lfloor \sigma \rfloor + 1/2}{\sigma^2} \Big).
\]
Note that $\alpha \leq e^{-1/2}$ for $\sigma<1$ and $\alpha \leq
e^{-3/4\sigma}$ for $\sigma \geq 1$. Applying the elementary
inequality $1-e^{-x} \geq \min(2x/3,1/2)$, we obtain $1-\alpha \geq
1/(2\sigma)$ if $\sigma>1$ and $1-\alpha \geq 1/3$ if $\sigma \leq 1$. Hence
\[
r = \frac{1}{1-\alpha} \wedge \Delta \leq (2\sigma \vee 3) \wedge \Delta
\leq 2\big( (\sigma \wedge \Delta) \vee 2 \big).
\]
By Theorem \ref{thm:metro-Poi}, we then obtain
\[
C^{\mathrm{Poi}} \leq 30 \, \big( (\sigma \wedge \Delta) \vee 2\big)^2.
\]
Moreover, since $-\Delta \leq a \leq b \leq \Delta$, one has
\[
\frac{\mu(k)}{\mu(0)} = \exp\Big( -\frac{k^2}{2\sigma^2} \Big)
\geq \exp\Big( -\frac12 \frac{\Delta^2}{\sigma^2} \Big)
\qquad \text{for any } k \in S,
\]
and thus
\[
\log \frac{1}{\mu_*} \leq \frac12 \big(\Delta/\sigma\big)^2
+ \log \frac{1}{\mu(0)}
\leq \frac12 \big(\Delta/\sigma\big)^2 + \log \Delta.
\]
Therefore we obtain, by Theorem
\ref{thm:metro-LS},
\begin{align*}
\gamma &\leq  150\big( (\sigma \wedge \Delta) \vee 2\big)^2
\big(\Delta/\sigma\big)^2
+ 300 \big( (\sigma \wedge \Delta) \vee 2\big)^2 \log \Delta\\
&\leq 300 \Big( \frac{\Delta}{\sigma \wedge 1} \Big)^2
+ 300 \big( (\sigma \wedge \Delta) \vee 2\big)^2 \log \Delta.
\end{align*}

\bibliographystyle{amsplain}
\bibliography{mcmc}

\def\cprime{$'$}
\providecommand{\bysame}{\leavevmode\hbox to3em{\hrulefill}\thinspace}
\providecommand{\MR}{\relax\ifhmode\unskip\space\fi MR }
% \MRhref is called by the amsart/book/proc definition of \MR.
\providecommand{\MRhref}[2]{%
  \href{http://www.ams.org/mathscinet-getitem?mr=#1}{#2}
}
\providecommand{\href}[2]{#2}
\begin{thebibliography}{10}

\bibitem{BBL}
Th. Bengtsson, P.~Bickel, and Bo~Li, \emph{Curse-of-dimensionality revisited:
  collapse of the particle filter in very large scale systems}, Probability and
  statistics: essays in honor of {D}avid {A}. {F}reedman, Inst. Math. Stat.
  Collect., vol.~2, Inst. Math. Statist., Beachwood, OH, 2008, pp.~316--334.
  \MR{2459957 (2009k:93144)}

\bibitem{BCJ}
A.~Beskos, D.~Crisan, and A.~Jasra, \emph{On the stability of a class of
  sequential {M}onte {C}arlo methods in high dimensions}, Tech. report,
  Imperial College, London, 2011.

\bibitem{BLB}
P.~Bickel, Bo~Li, and Th. Bengtsson, \emph{Sharp failure rates for the
  bootstrap particle filter in high dimensions}, Pushing the limits of
  contemporary statistics: contributions in honor of {J}ayanta {K}. {G}hosh,
  Inst. Math. Stat. Collect., vol.~3, Inst. Math. Statist., Beachwood, OH,
  2008, pp.~318--329. \MR{2459233 (2010c:93107)}

\bibitem{CGMR}
O.~Capp{\'e}, A.~Guillin, J.~M. Marin, and C.~P. Robert, \emph{Population
  {M}onte {C}arlo}, J. Comput. Graph. Statist. \textbf{13} (2004), no.~4,
  907--929. \MR{2109057}

\bibitem{cappe}
O.~Capp{\'e}, E.~Moulines, and T.~Ryd{\'e}n, \emph{Inference in hidden {M}arkov
  models}, Springer Series in Statistics, Springer, New York, 2005.
  \MR{MR2159833 (2006e:60002)}

\bibitem{cerone}
F.~C{\'e}rou, P.~Del~Moral, and A.~Guyader, \emph{A nonasymptotic theorem for
  unnormalized {F}eynman-{K}ac particle models}, Ann. Inst. H. Poincaré
  Probab. Statist. \textbf{47} (2011), no.~3, 629--649.

\bibitem{chop-stat}
N.~Chopin, \emph{A sequential particle filter method for static models},
  Biometrika \textbf{89} (2002), no.~3, 539--551. \MR{1929161}

\bibitem{chop-ann}
\bysame, \emph{Central limit theorem for sequential {M}onte {C}arlo methods and
  its application to {B}ayesian inference}, Ann. Statist. \textbf{32} (2004),
  no.~6, 2385--2411. \MR{2153989 (2006b:60033)}

\bibitem{DM}
P.~Del~Moral, \emph{Feynman-{K}ac formulae}, Springer-Verlag, New York, 2004.
  \MR{MR2044973 (2005f:60003)}

\bibitem{DMDJ}
P.~Del~Moral, A.~Doucet, and A.~Jasra, \emph{Sequential {M}onte {C}arlo
  samplers}, J. R. Statist. Soc. B \textbf{68} (2006), no.~3, 411--436.
  \MR{MR1819122 (2002k:60013)}

\bibitem{DMDJ2}
\bysame, \emph{On adaptive resampling procedures for sequential {M}onte {C}arlo
  methods}, Bernoulli (to appear).

\bibitem{DMG}
P.~Del~Moral and A.~Guionnet, \emph{On the stability of interacting processes
  with applications to filtering and genetic algorithms}, Ann. Inst. H.
  Poincar\'e Probab. Statist. \textbf{37} (2001), no.~2, 155--194.
  \MR{MR1819122 (2002k:60013)}

\bibitem{DMM-ann}
P.~Del~Moral and L.~Miclo, \emph{On the convergence and applications of
  generalized simulated annealing}, SIAM J. Control Optim. \textbf{37} (1999),
  no.~4, 1222--1250 (electronic). \MR{1691939 (2000d:90125)}

\bibitem{MM}
\bysame, \emph{Branching and interacting particle systems approximations of
  {F}eynman-{K}ac formulae with applications to non-linear filtering},
  S\'eminaire de Probabilit\'es, XXXIV, Lecture Notes in Math., vol. 1729,
  Springer, Berlin, 2000, pp.~1--145. \MR{MR1768060 (2001g:60091)}

\bibitem{Dia-cutoff}
P.~Diaconis, \emph{The cutoff phenomenon in finite {M}arkov chains}, Proc. Nat.
  Acad. Sci. U.S.A. \textbf{93} (1996), no.~4, 1659--1664. \MR{1374011
  (97b:60112)}

\bibitem{DLP}
J.~Ding, E.~Lubetzky, and Y.~Peres, \emph{Total variation cutoff in
  birth-and-death chains}, Probab. Theory Related Fields \textbf{146} (2010),
  no.~1-2, 61--85. \MR{2550359 (2010m:60011)}

\bibitem{DGMR}
R.~Douc, A.~Guillin, J.-M. Marin, and C.~P. Robert, \emph{Minimum variance
  importance sampling via population {M}onte {C}arlo}, ESAIM Probab. Stat.
  \textbf{11} (2007), 427--447 (electronic). \MR{2339302 (2008m:62121)}

\bibitem{DouMou}
R.~Douc and E.~Moulines, \emph{Limit theorems for weighted samples with
  applications to sequential {M}onte {C}arlo methods}, Ann. Statist.
  \textbf{36} (2008), no.~5, 2344--2376. \MR{2458190 (2009k:60053)}

\bibitem{DdFG}
A.~Doucet, N.~de~Freitas, and N.~Gordon (eds.), \emph{Sequential {M}onte
  {C}arlo methods in practice}, Springer-Verlag, New York, 2001. \MR{1847783
  (2003h:65007)}

\bibitem{EM-proc}
A.~Eberle and C.~Marinelli, \emph{Stability of sequential {M}arkov chain
  {M}onte {C}arlo methods}, Conference {O}xford sur les m\'ethodes de {M}onte
  {C}arlo s\'equentielles, ESAIM Proc., vol.~19, EDP Sci., Les Ulis, 2007,
  pp.~22--31. \MR{2405646 (2009e:82063)}

\bibitem{JMAA}
\bysame, \emph{{$L^p$} estimates for {F}eynman-{K}ac propagators with
  time-dependent reference measures}, J. Math. Anal. Appl. \textbf{365} (2010),
  no.~1, 120--134. \MR{2585083}

\bibitem{Geyer91}
C.~J. Geyer, \emph{Markov chain {M}onte {C}arlo maximum likelihood}, Computing
  Science and Statistics: Proceedings of the 23rd Symposium on the Interface,
  1991, pp.~156--163.

\bibitem{GS-II}
{\u{I}}.~{\=I}. G{\={\i}}hman and A.~V. Skorohod, \emph{The theory of
  stochastic processes. {II}}, Springer-Verlag, New York, 1975. \MR{0375463 (51
  \#11656)}

\bibitem{Guli}
A.~Gulisashvili and J.~A. van Casteren, \emph{Non-autonomous {K}ato classes and
  {F}eynman-{K}ac propagators}, World Scientific Publishing Co. Pte. Ltd.,
  Hackensack, NJ, 2006. \MR{2253111 (2008b:60161)}

\bibitem{HukNem}
K.~Hukushima and K.~Nemoto, \emph{Exchange {M}onte {C}arlo method and
  application to spin glass simulations}, J. Phys. Soc. Japan \textbf{65}
  (1996), no.~6, 1604--1608.

\bibitem{Jarz}
C.~Jarzynski, \emph{Nonequilibrium equality for free energy differences}, Phys.
  Rev. Lett. \textbf{78} (1997), no.~14, 2690--2693.

\bibitem{JasDou}
A.~Jasra and A.~Doucet, \emph{Stability of sequential {M}onte {C}arlo samplers
  via the {F}oster-{L}yapunov condition}, Statist. Probab. Lett. \textbf{78}
  (2008), no.~17, 3062--3069. \MR{2474398 (2010b:60066)}

\bibitem{KipLan}
C.~Kipnis and C.~Landim, \emph{Scaling limits of interacting particle systems},
  Springer-Verlag, Berlin, 1999. \MR{MR1707314 (2000i:60001)}

\bibitem{KZW}
S.~C. Kou, Qing Zhou, and Wing~Hung Wong, \emph{Equi-energy sampler with
  applications in statistical inference and statistical mechanics}, Ann.
  Statist. \textbf{34} (2006), no.~4, 1581--1652. \MR{2283711}

\bibitem{liu}
J.~S. Liu, \emph{Monte {C}arlo strategies in scientific computing},
  Springer-Verlag, New York, 2001. \MR{MR1842342 (2002i:65006)}

\bibitem{MZ}
N.~Madras and Z.~Zheng, \emph{On the swapping algorithm}, Random Structures
  Algorithms \textbf{22} (2003), no.~1, 66--97. \MR{MR1943860 (2004c:82117)}

\bibitem{Miclo-H}
L.~Miclo, \emph{An example of application of discrete {H}ardy's inequalities},
  Markov Process. Related Fields \textbf{5} (1999), no.~3, 319--330.
  \MR{1710983 (2000h:60081)}

\bibitem{Neal}
R.~M. Neal, \emph{Annealed importance sampling}, Stat. Comput. \textbf{11}
  (2001), no.~2, 125--139. \MR{1837132}

\bibitem{casella}
C.~P. Robert and G.~Casella, \emph{Monte {C}arlo statistical methods}, second
  ed., Springer-Verlag, New York, 2004. \MR{MR2080278 (2005d:62006)}

\bibitem{Rou-phd}
M.~Rousset, \emph{Continuous time population {M}onte {C}arlo and computational
  physics}, Ph.D. thesis, {U}niversit{\'e} Paul Sabatier, Toulouse, 2006.

\bibitem{Rou}
\bysame, \emph{On the control of an interacting particle estimation of
  {S}chr\"odinger ground states}, SIAM J. Math. Anal. \textbf{38} (2006),
  no.~3, 824--844 (electronic). \MR{2262944 (2007m:60300)}

\bibitem{SC}
L.~Saloff-Coste, \emph{Lectures on finite {M}arkov chains}, Lectures on
  probability theory and statistics (Saint-Flour, 1996), Lecture Notes in
  Math., vol. 1665, Springer, Berlin, 1997, pp.~301--413. \MR{MR1490046
  (99b:60119)}

\bibitem{Schw-phd}
N.~Schweizer, \emph{Non-asymptotic error bounds for {S}equential {MCMC}}, Ph.D.
  thesis, {U}niversit\"at {B}onn, 2011.

\bibitem{St-var}
W.~Stannat, \emph{On the convergence of genetic algorithms---a variational
  approach}, Probab. Theory Related Fields \textbf{129} (2004), no.~1,
  113--132. \MR{MR2052865 (2005d:35040)}

\bibitem{white}
N.~Whiteley, \emph{Sequential {M}onte {C}arlo samplers: error bounds and
  insensitivity to initial conditions}, arXiv:1103.3970v1.

\end{thebibliography}

\end{document}